\documentclass[11pt]{amsart}
\usepackage[normalem]{ulem}
\usepackage{amssymb, adjustbox, enumerate, amsbsy, stmaryrd}
\usepackage{geometry}
\usepackage{todonotes}
\usepackage{amsfonts, amssymb, amscd}
\numberwithin{equation}{section}

\usepackage[symbol]{footmisc}

\usepackage{bm}
\usepackage{verbatim}
\usepackage{mathrsfs}
\usepackage{graphicx}
\usepackage{tikz-cd}
\usepackage{subcaption}
\usepackage{listings}
\usepackage{subfiles}
\usepackage[toc,page]{appendix}
\usepackage{mathtools}
\usepackage{comment}
\usepackage{enumerate}
\usepackage{enumitem}
\usepackage[all]{xy}

\usepackage{graphicx}
\graphicspath{{images/}}

\usepackage{appendix}
\usepackage{hyperref}
\hypersetup{
    colorlinks=true,
    citecolor=red,
    linkcolor=blue,
    filecolor=magenta,      
    urlcolor=red,
}
\def\bbeta{\boldsymbol{\beta}}
\def\ggamma{\boldsymbol{\gamma}}

\def\ddelta{\boldsymbol{\delta}}

\def\ideal#1.{I_{#1}}
\def\ring#1.{\mathcal {O}_{#1}}
\def\fring#1.{\hat{\mathcal {O}}_{#1}}
\def\proj#1.{\mathbb P(#1)}
\def\pr #1.{\mathbb P^{#1}}
\def\af #1.{\mathbb A^{#1}}
\def\Hz #1.{\mathbb F_{#1}}
\def\Hbz #1.{\overline{\mathbb F}_{#1}}
\def\pic#1.{\operatorname {Pic}\,(#1)}
\def\pico#1.{\operatorname{Pic}^0(#1)}
\def\picg#1.{\operatorname {Pic}^G(#1)}
\def\ner#1.{NS (#1)}
\def\rdown#1.{\llcorner#1\lrcorner}
\def\rup#1.{\ulcorner#1\urcorner}
\def\cone#1.{\operatorname {NE}(#1)}

\def\ccone#1.{\overline{\operatorname {NE}}(#1)}
\def\coef#1.{\frac{(#1-1)}{#1}}
\def\vit#1.{D_{\langle #1 \rangle}}
\def\mm#1.{\overline {M}_{0,#1}}
\def\H1#1.{H^1(#1,{\ring #1.})}
\def\ac#1.{\overline {\mathbb F}_{#1}}

\def\adj#1.{\frac {#1-1}{#1}}
\def\spn#1.{\overline{#1}}
\def\ses#1.#2.#3.{0\to #1\to #2\to #3 \to 0}
\def\pek#1.#2.{\Cal P^{#1}(#2)}
\def\plk#1.#2.{\Cal P^{\leq #1}(#2)}
\def\ev#1.{\operatorname{ev_{#1}}}
\def\bminv#1.{(\nu_1,s_1;\nu_2,s_2;\dots ;\nu_{#1},s_{#1};\nu_{r+1})}
\def\zinv#1.{(\nu_1,s_1;\nu_2,s_2;\dots ;\nu_{#1},s_{#1};0)}
\def\iinv#1.{(\nu_1,s_1;\nu_2,s_2;\dots ;\nu_{#1},s_{#1};\infty)}
\def\map#1.#2.{#1 \longrightarrow #2}
\def\rmap#1.#2.{#1 \dasharrow #2}
\def\emb#1.#2.{#1 \hookrightarrow #2}

\def\dim{\operatorname{dim}}

\def\e{\Cal E}

\def\e1{E_1}
\def\e2{E_2}

\def\OO{\mathcal O}

\newcommand{\po}{\ar@{}[dr]|{\text{\pigpenfont R}}}
\newcommand{\pb}{\ar@{}[dr]|{\text{\pigpenfont J}}}

\newcommand\Q{{\mathbb{Q}}}
\newcommand\R{{\mathbb{R}}}

\theoremstyle{definition}

\theoremstyle{definition}

\newtheorem{theorem}{Theorem}[section]
\newtheorem{lemma}[theorem]{Lemma}
\newtheorem{proposition}[theorem]{Proposition}
\newtheorem{corollary}[theorem]{Corollary}

\theoremstyle{definition}
\newtheorem{definition}[theorem]{Definition}

\newtheorem{remark}[theorem]{Remark}
\newtheorem{conjecture}[theorem]{Conjecture}

\begin{document} 

\author{Christopher Hacon} \address{Department of Mathematics\\ University of Utah\\ 155 S 1400 E\\ Salt Lake City, Utah 84112} \email{hacon@math.utah.edu}
\thanks{Christopher Hacon was partially supported by the DMS-2301374 and
by a grant from the Simons Foundation SFI-MPS-MOV-00006719-07.}
\author{Yi Li}
  \address{Department of Mathematics, Wuhan University, Hubei, Wuhan 430074, China}
    \email{yilimath@whu.edu.cn}
\author{Lingyao Xie} \address{Department of Mathematics\\ University of California, San Diego \\ 9500 Gilman Drive
 0112\\ La Jolla, CA 92093-0112, USA} \email{l6xie@ucsd.edu}
\date{\today}

\title{Fujiki Class $\mathcal C$ Varieties and a K\"ahler Criterion}
\maketitle
\begin{abstract}
In this article, we show that flips and divisorial contractions preserve the
K\"ahler condition (for strongly $\mathbb{Q}$-factorial compact K\"ahler
generalized klt pairs with $B+\bbeta_X$ big), and we give a criterion for
varieties in Fujiki's class $\mathcal C$ to be K\"ahler. We also prove the
existence of small $\mathbb Q$-factorializations for generalized klt pairs
and of dlt modifications for generalized pairs.
\end{abstract}
\tableofcontents
\section{Introduction}
The  classification of compact K\"ahler manifolds up to bimeromorphism is one of the central problems in analytic geometry with many important consequences and applications. 
For projective varieties, the situation is now largely understood thanks to the minimal model program and in particular to the results of \cite{BCHM10}. It is expected that analogues of the main results of \cite{BCHM10} should hold for compact K\"ahler varieties.

\begin{conjecture}
Let $(X,B+\bbeta)$ be a  generalized klt pair, where $X$ is compact K\"ahler and $B+\bbeta_X$ is modified big. Then:
\begin{enumerate}
\item if $K_X+B+\bbeta_X$ is pseudo-effective, then $(X,B+\bbeta)$ has a good minimal model;
\item if $K_X+B+\bbeta_X$ is not pseudo-effective, then $(X,B+\bbeta)$ has a Mori fiber space.
\end{enumerate}
\end{conjecture}

Note that the above conjecture is known in full generality for projective varieties \cite{DH24}.  We will address this conjecture in the forthcoming paper \cite{HX26}.
It is expected that the desired output is obtained by a finite sequence of flips and divisorial contractions, ending either with a good minimal model or with a Mori fiber space. 
In order to prove the above conjecture we hope to imitate the approach of \cite{BCHM10}. The main technical issues that must be settled are the cone theorem  (cf. \cite{HP24}),
the contraction theorem for generalized klt pairs, and the existence of flips for generalized klt pairs (cf. \cite{DH23,DH25}).
In this paper we resolve 
another important issue: we show that flips and divisorial contractions preserve the K\"ahler condition.

\begin{theorem}\label{t-contK}
Let $(X,B+\bbeta)$ be a generalized klt pair, where $X$ is compact, K\"ahler,  strongly
$\Q$-factorial, and $B+\bbeta_X$ is big. If
$f: X\to Y$ is a flipping or divisorial contraction associated to an extremal ray $\Gamma$, then $Y$ is K\"ahler.
\end{theorem}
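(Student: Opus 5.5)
Our approach is to build a Kähler class on $Y$ by pushing forward a class from $X$, and then to show that this class is Kähler using a criterion for Fujiki class $\mathcal C$ spaces. The criterion we have in mind characterizes Kähler classes as the classes that are positive on all curves and on all positive-dimensional subvarieties. This is the analytic analogue of Kleiman/Nakai–Moishezon, in the spirit of Demailly–Păun and of the Kähler criterion announced in the abstract.

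First we set up the class. Let $\omega$ be a Kähler class on $X$. Since $\Gamma$ is $(K_X+B+\bbeta_X)$-negative and extremal, the cone theorem (\cite{HP24}) together with the bigness of $B+\bbeta_X$ lets us choose $\lambda>0$ with
$$\alpha:=\omega+\lambda(K_X+B+\bbeta_X)$$
nef and vanishing exactly on $\Gamma$. In other words, $\alpha$ is a supporting class of $\Gamma$ that is positive on $\overline{NA}(X)\setminus\Gamma$. Because $X$ is strongly $\Q$-factorial and $f$ is the contraction of $\Gamma$, we have $\alpha=f^*\alpha_Y$ for some class $\alpha_Y\in H^{1,1}_{BC}(Y)$. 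This step uses the fact that a class that is $f$-numerically trivial descends; we would prove that via the relative Kawamata–Viehweg-type vanishing $R^1f_*\mathcal O_X=0$ for klt-type pairs with $f$-ample $-(K_X+B+\bbeta_X)$, combined with an exponential-sequence argument. Next, $Y$ is in Fujiki class $\mathcal C$, because it is bimeromorphic to the Kähler space $X$.

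Second, we check that $\alpha_Y$ is positive in the required sense. On a curve $C\subset Y$ that is not contained in the exceptional locus image, we take a strict transform $C'$; then $\alpha_Y\cdot C=\alpha\cdot C'>0$, since $[C']\notin\Gamma$. For curves in $f(\mathrm{Ex}(f))$ we lift to a curve in the exceptional locus that is not contracted. Such a curve exists because the fibres of $f$ over points are exactly the $\Gamma$-curves, and a curve mapping onto $C$ has a class outside $\Gamma$. For a positive-dimensional subvariety $V\subset Y$ we similarly take $W\subset X$ mapping generically finitely onto $V$ (a multisection cut out by $\omega$), and obtain $\int_V\alpha_Y^{\dim V}=\frac{1}{\deg}\int_W\alpha^{\dim V}>0$. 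The last inequality holds because $\alpha|_W$ is nef and big: its restriction is positive on every curve not contracted by $W\to V$.

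Third, we apply the criterion: a nef class on a compact Fujiki-class $\mathcal C$ space (with klt-type singularities) that is strictly positive on every curve, and whose top self-intersection on every subvariety is positive, is Kähler. The main obstacle is this final step. Positivity on curves and subvarieties does not immediately give a smooth positive representative, so we need a singular-space Demailly–Păun theorem. To get it, we would pass to a resolution $\nu:Y'\to Y$ and work with the nef class $\nu^*\alpha_Y+\epsilon(\text{stuff})$. We would then show that the non-Kähler locus of $\nu^*\alpha_Y$ is contained in $\mathrm{Ex}(\nu)$, using Collins–Tosatti (the non-Kähler locus equals the null locus). Finally, we would use an induction on dimension along the exceptional locus to glue local potentials into a Kähler form on $Y$.
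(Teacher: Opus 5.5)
You have the same architecture as the paper: a supporting class $\alpha$, the descent $\alpha=f^*\alpha_Y$, positivity of $\alpha_Y$ on curves, and then a null-locus criterion on the Fujiki space $Y$. But the step that carries all the weight rests on an argument that fails. In your second step you deduce $\int_V\alpha_Y^{\dim V}>0$ for every $V\subset Y$ from the claim that $\alpha|_W$ is big because it is positive on every curve not contracted by $W\to V$. This includes $V=Y$, i.e.\ bigness of $\alpha$ itself. Strict positivity on curves does not imply bigness:
\begin{itemize}
\item Mumford's ruled surface over a curve of genus $\ge 2$ carries a nef divisor $D$ with $D\cdot C>0$ for every curve $C$, yet $D^2=0$.
\item On a complex torus with no curves, any nef, non-big class is vacuously positive on curves.
\end{itemize}
Proving ${\rm Null}(\alpha_Y)=\emptyset$ is precisely the content of the theorem. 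By contrast, the step you call the main obstacle is already known. On a compact normal variety in class $\mathcal C$, the non-K\"ahler locus of a nef and big class equals its null locus (Theorem~\ref{t-null}, \cite[Theorem 4.21]{HP24}). So once the null locus is known to be empty, no gluing of potentials on a resolution is needed.

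The missing idea is that you never use that $\alpha_Y$ is an \emph{adjoint} class on $Y$; without that, the implication ``positive on curves $\Rightarrow$ empty null locus'' is false. The paper proceeds as follows.
\begin{itemize}
\item It first shows, locally over $Y$ via a log canonical model, that $(Y,B_Y+\bbeta_Y+f_*\omega)$ is generalized klt with $\alpha_Y=[K_Y+B_Y+\bbeta_Y+f_*\omega]$. In particular $Y$ has rational singularities. The paper uses this both for the descent $\alpha=f^*\alpha_Y$ (via \cite[Lemma 2.6]{DH24}) and to get nefness of $\alpha_Y$ (Lemma~\ref{l-nefpb}). You skip nefness entirely, although $Y$ is not yet known to be K\"ahler.
\item Bigness of $\alpha$ comes from Theorem~\ref{t-raynonbig}. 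If $\alpha$ were not big, $X$ would be covered by $\alpha$-trivial rational curves (via the MRC fibration, \cite{CH20}, an MMP on the fibres, and bend-and-break). Their classes would lie in $\Gamma$, but $\Gamma$-curves only sweep out ${\rm Ex}(f)$.
\item Emptiness of ${\rm Null}(\alpha_Y)$ comes from Theorem~\ref{t-ray}. Take a maximal component $Z$ of the null locus. An lc-threshold perturbation makes $Z$ the image of a unique lc place $S$ on a dlt model. Adjunction to $S$, a relative good minimal model over a resolution $Z'$ of $Z$, and \cite[Theorem 2.2]{HP24} force $Z'$ to be uniruled; otherwise $\alpha_Y|_Z$ would be big. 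The MRC fibration of $Z'$ then covers $Z$ by $\alpha_Y$-trivial rational curves, contradicting positivity on curves.
\end{itemize}

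Two smaller points:
\begin{itemize}
\item The K\"ahler class $\omega$ must be chosen depending on $\Gamma$ (Proposition~\ref{p-nefsupp}). For an arbitrary $\omega$, the nef threshold of $\omega+\lambda(K_X+B+\bbeta_X)$ cuts out a face that need not be $\Gamma$.
\item A transcendental class cannot ``cut out'' a multisection. Fibre integration as in Lemma~\ref{l-nefpb} can replace that step.
\end{itemize}
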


This result is essential for the K\"ahler minimal model program, to guarantee that after each divisorial contraction or flip we remain within the category of compact K\"ahler spaces, so that we can continue the minimal model program. In dimension three, this was proved by \cite{HP16,CHP16,DHY23} using a variant of Kleiman's K\"ahlerness criterion, which does not hold in higher dimensions. In this paper we show that a version of Kleiman's K\"ahlerness criterion (Theorem \ref{t-ray}) holds 
 for a big and nef adjoint class $\alpha = K_X + B + \bbeta_X$ on a compact complex variety in Fujiki's class $\mathcal{C}$ with generalized klt singularities. Thus, $\alpha$ is K\"ahler if and only if there are no $\alpha$-trivial curves. We also expect that a result similar to Theorem \ref{t-contK} holds without
assuming that $\alpha$ is big so that if $f:X\to Y$ is a Mori fiber space, then $Y$ is K\"ahler. We do not pursue this here, however see \cite[Theorem 7.1]{HX26}.

Our second main result gives a criterion for a variety in Fujiki's class $\mathcal C$ to be K\"ahler.

\begin{theorem}\label{c-1} Let $(X,B+\bbeta)$ be a generalized klt pair where $X$ is compact in Fujiki's class $\mathcal C$. If $X$ is not K\"ahler then either
\begin{enumerate}
\item $X$ contains a rational curve $C$ such that $-[C]\in \overline{\rm NA}(X)$, or
\item there is a small $\Q$-factorial K\"ahler modification $\mu:X^{\rm qf}\to X$. 
\end{enumerate}
\end{theorem}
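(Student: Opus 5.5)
Since $X$ is in Fujiki's class $\mathcal C$, there is a log resolution $\nu:X'\to X$ with $X'$ a compact K\"ahler manifold. On $X'$ we run a relative MMP over $X$ to produce a small $\Q$-factorial modification of $X$, and then split into cases depending on whether it is K\"ahler.

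\textbf{Step 1: the small modification.} Write
\[
K_{X'}+B'+\bbeta_{X'}=\nu^*(K_X+B+\bbeta_X)+E',
\]
where $B'$ is the strict transform of $B$ plus the reduced exceptional divisor (with a small perturbation so that the klt condition survives). The divisor $E'\ge 0$ is $\nu$-exceptional, with support equal to the whole exceptional locus. Since $\nu$ is projective bimeromorphic, $\bbeta_{X'}+B'$ is big over $X$. Hence the relative MMP over $X$ for generalized klt pairs runs by the relative projective results (\cite{DH24}, or \cite{BCHM10} relativized over a base in class $\mathcal C$).

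By the negativity lemma, this MMP contracts exactly the divisorial part of $E'$. Its output is $\mu:X^{\rm qf}\to X$, which is small, projective over $X$, and $\Q$-factorial. Since $\mu$ is projective and $X$ is in class $\mathcal C$, $X^{\rm qf}$ is again in class $\mathcal C$. If $X^{\rm qf}$ is K\"ahler, we are in case (2).

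\textbf{Step 2: the non-K\"ahler case.} Suppose $X^{\rm qf}$ is not K\"ahler. We work on $X^{\rm qf}$ and aim to produce a rational curve $C$ whose negative lies in $\overline{\rm NA}$. That is, $[C]$ fails to be positive against all K\"ahler-type classes, which is the standard dual obstruction to K\"ahlerness.

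Take a modified K\"ahler (big) class $\omega$ on $X^{\rm qf}$, for instance the pushforward of a K\"ahler class from $X'$. Consider the generalized pair $(X^{\rm qf},B_{X^{\rm qf}}+\bbeta+t\omega)$ for large $t$. The idea is to use the K\"ahler criterion Theorem \ref{t-ray} for big and nef adjoint classes on class $\mathcal C$ varieties. Choose a class $\alpha=K+B+\bbeta+t\omega$ that is big, and try to make it nef. The criterion then says $\alpha$ is K\"ahler if and only if there is no $\alpha$-trivial curve. Since $X^{\rm qf}$ is not K\"ahler, either $\alpha$ fails to be nef or there is an $\alpha$-trivial curve.

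\textbf{Step 3: extracting the rational curve.} In either sub-case, use the cone theorem on class $\mathcal C$ (cf. \cite{HP24}) for the generalized klt pair. A failure of nefness is detected by an extremal ray spanned by a rational curve $C$ with $\alpha\cdot C\le 0$, where bend-and-break gives rationality. Such a curve must satisfy $\omega\cdot C\le 0$ for every modified K\"ahler class, so its class lies in $-\overline{\rm NA}$.

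If $C$ is $\mu$-exceptional, then $\mu$ being small and projective lets us compare against the relative cone. Otherwise we push $C$ to $X$, obtaining a rational curve with $-[C]\in\overline{\rm NA}(X)$ and hence case (1).

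\textbf{Main obstacle.} The hardest step is Step 3: the duality statement identifying non-K\"ahlerness of a class $\mathcal C$ space with a curve class in $-\overline{\rm NA}$. This requires Demailly–P\u{a}un type arguments on singular spaces. I would first try to reduce to $X'$, using the fact that a class on $X$ whose pullback is nef and big and positive on curves not contracted by $\nu$ is K\"ahler. Alternatively, I would rely directly on Theorem \ref{t-ray} to handle this.
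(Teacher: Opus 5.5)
Your Step 1 is fine in outline (it is essentially the relative MMP over $X$ of Proposition \ref{c-relmmp}). The argument breaks down in Steps 2--3, which is where the content of the theorem lies.

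Once you pass to $X^{\rm qf}$ without tracking K\"ahlerness, you are on a space known only to be in class $\mathcal C$, and there is no cone theorem there. Both \cite{HP24} and Corollary \ref{c-conetheorem} require a compact K\"ahler space, so ``a failure of nefness is detected by an extremal ray spanned by a rational curve'' is not available. Theorem \ref{t-ray} does not rescue this: it applies only when $\alpha$ is already nef and big, and it produces an $\alpha$-trivial rational curve, which says nothing about $-[C]\in\overline{\rm NA}(X)$.

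Your sentence ``such a curve must satisfy $\omega\cdot C\le 0$ for every modified K\"ahler class, so its class lies in $-\overline{\rm NA}$'' is unjustified on both counts. Non-positivity against one class $\alpha$ does not give non-positivity against all classes. Non-positivity against modified K\"ahler classes is also not the same as $-[C]$ lying in the closed cone of positive currents on $X$. Pushing $C$ forward to $X$ does not create this membership either.

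The missing idea is to preserve K\"ahlerness step by step along the relative MMP, so that the absolute cone theorem is always available on the current model. The paper takes $\omega'$ very general on the K\"ahler resolution $X'$ and runs the $(K_{X'}+B'+\bbeta_{X'})$-MMP over $X$ with scaling of $\omega'$. At each step, very generality makes the relatively contracted face a single ray $\mathbb R^+[C]$, and one compares it with the absolute cone $\overline{\rm NA}(X')$, available because $X'$ is K\"ahler. This gives two cases.

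\begin{enumerate}
\item If the ray is not extremal in $\overline{\rm NA}(X')$, the cone theorem writes $[C]=\sum\sigma_i[\Sigma_i]+\gamma$, with $\Sigma_i$ rational curves spanning extremal rays. If all $\Sigma_i$ were $\nu$-contracted, relative nefness and the very general choice of $\omega'$ would force $\mathbb R^+[C]=\mathbb R^+[\Sigma_i]$, a contradiction. So some $\nu_*\Sigma_i\neq 0$, and $\nu_*[C]=0$ gives $-[\nu_*\Sigma_i]\in\overline{\rm NA}(X)$, which is case (1).
\item If the ray is absolutely extremal and all its curves are $\nu$-vertical, it is a divisorial or flipping contraction of an absolute extremal ray. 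Theorem \ref{t-contK} then makes the target K\"ahler, and hence the flip is K\"ahler too. If some curve in the ray is not $\nu$-vertical, you are again trivially in case (1).
\end{enumerate}

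Termination follows from finiteness of models \cite[Theorem E]{Fuj22}, checked locally over $X$. The output is a small K\"ahler modification, to which Theorem \ref{t-3} is applied. Your ``Main obstacle'' paragraph gestures at reducing to $X'$, but without this ray-by-ray comparison of the relative and absolute cones on a K\"ahler model, nothing produces the curve in case (1).
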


An analogous statement for projective varieties was proved in \cite{VP21}. Our proof follows the ideas of \cite{VP21}, applying results from the K\"ahler minimal model program, especially the results of \cite{Fuj22}. %The main difference between our proof and \cite{VP21} is that we use the finiteness of log canonical models, \cite[Theorem E]{Fuj22}, instead of running a relative minimal model program and carefully analyzing the steps of the MMP.
As an immediate consequence of Theorem \ref{c-1}, we obtain the following K\"ahlerness criterion.

\begin{corollary}\label{c-Kahlercritera}
Let $(X,B+\bbeta)$ be a generalized klt pair where $X$ is strongly $\Q$-factorial, compact, in Fujiki's class
$\mathcal C$. Then $X$ is K\"ahler if and only if $\overline{\rm NA}(X)$ does not
contain a class of the form $-[C]$, where $C$ is a rational curve.
\end{corollary}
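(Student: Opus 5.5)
The plan is to deduce the corollary directly from Theorem \ref{c-1}, treating the two implications separately.

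\emph{The forward direction.} Suppose $X$ is K\"ahler with K\"ahler class $\omega$. Every class in $\overline{\rm NA}(X)$ pairs non-negatively with $\omega$, since $\overline{\rm NA}(X)$ is the closure of the cone generated by currents of bidimension $(1,1)$. For any curve $C$ we have $\omega\cdot C>0$, so $\omega\cdot(-[C])<0$. Hence $-[C]\notin\overline{\rm NA}(X)$ for every curve $C$, rational or not.

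\emph{The converse.} Assume $X$ is not K\"ahler and that no class $-[C]$ with $C$ rational lies in $\overline{\rm NA}(X)$. Theorem \ref{c-1} then rules out alternative (1), so we are in alternative (2): there is a small $\Q$-factorial K\"ahler modification $\mu:X^{\rm qf}\to X$. I will show that the strong $\Q$-factoriality of $X$ forces $\mu$ to be an isomorphism. This contradicts the assumption that $X$ is not K\"ahler, because $X^{\rm qf}$ is K\"ahler.

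Showing that $\mu$ is an isomorphism is the main step. Let $\omega'$ be a K\"ahler form on $X^{\rm qf}$. Since $X^{\rm qf}$ is in class $\mathcal C$, we may perturb $\omega'$ to assume $\omega'$ is the class of a $\Q$-divisor $H'$, or at least of a $\mu$-ample $(1,1)$-class. Concretely, $\mu$ is projective, so there is a $\mu$-ample Cartier divisor $H'$ on $X^{\rm qf}$. Because $\mu$ is small, $H:=\mu_*H'$ is a Weil divisor on $X$ and $H'=\mu^{-1}_*H$. Strong $\Q$-factoriality says $mH$ is Cartier for some $m>0$. Then $\mu^*(mH)$ and $mH'$ agree outside the $\mu$-exceptional locus, which has codimension at least $2$. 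Since $\mu^*(mH)$ is Cartier, the two are equal. So $mH'=\mu^*(mH)$ is $\mu$-trivial, yet it is also $\mu$-ample. Hence $\mu$ has no positive-dimensional fibres, so $\mu$ is finite and bimeromorphic onto the normal space $X$, and is therefore an isomorphism.

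The one point needing care is that $\mu$ is projective, so that a $\mu$-ample divisor exists. If this does not follow from the construction of $\mu$ in Theorem \ref{c-1}, I would instead use the K\"ahler class $\omega'$ directly. Strong $\Q$-factoriality means every class in $H^{1,1}_{\rm BC}$ on a resolution descends, in the sense that $\mu_*\omega'$ is a $(1,1)$-class on $X$ with $\mu^*\mu_*\omega'=\omega'$, again because $\mu$ is small. Then $\omega'$ is trivial on every curve contracted by $\mu$, but a K\"ahler class is positive on every curve. So $\mu$ contracts no curves and is an isomorphism.
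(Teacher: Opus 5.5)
Your proof is correct and follows the paper, which states this corollary as an immediate consequence of Theorem \ref{c-1}: the forward direction holds because a K\"ahler class is non-negative on $\overline{\rm NA}(X)$ and strictly positive on curves, and for the converse, alternative (2) yields a small modification that must be an isomorphism when $X$ is strongly $\Q$-factorial, which is exactly what your $\mu$-ample line bundle argument shows. Two side remarks: a K\"ahler class cannot in general be perturbed to the class of a $\Q$-divisor, and your fallback claim $\mu^*\mu_*\omega'=\omega'$ does not follow from strong $\Q$-factoriality (which concerns only divisorial sheaves) without further work. Neither is needed, since $\mu$ is projective at least locally over $X$ (by its construction from local relative log canonical models and Theorem \ref{t-3}), and being an isomorphism is local over $X$.
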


Along the way we establish several technical results that are of independent interest and are used in the proofs of the theorems above. The first concerns the existence of small $\Q$-factorializations for generalized klt pairs.

\begin{theorem}[{Small $\Q$-factorial modifications, Theorem \ref{t-3}}]\label{t-smallQfactorial}
Let $(X,B+\bbeta)$ be a compact generalized klt pair. Then there exists
a small projective bimeromorphic morphism $\nu: X'\to X$ such that $X'$ is normal and strongly $\Q$-factorial.
\end{theorem}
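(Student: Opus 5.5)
We argue as in the projective case (cf. \cite{BCHM10}, \cite{Fuj22}): produce $X'$ by running a relative minimal model program over $X$ on a log resolution. Let $\pi\colon W\to X$ be a projective log resolution of $(X,B+\bbeta)$, chosen high enough that $\bbeta$ descends to $W$, so that $\bbeta_W$ is nef over $X$. Let $E=\sum E_i$ be the reduced $\pi$-exceptional divisor and $B_W=\pi^{-1}_*B$. Write
\[
K_W+B_W+\bbeta_W+\sum a_iE_i=\pi^*(K_X+B+\bbeta_X),\qquad a_i<1 .
\]
Set $\Delta_W=B_W+\sum_i \max(a_i,0)E_i$. Then
\[
K_W+\Delta_W+\bbeta_W\equiv_X F:=\sum_i \bigl(\max(a_i,0)-a_i\bigr)E_i\ge 0,
\]
where $F$ is exceptional and $(W,\Delta_W+\bbeta)$ is generalized klt. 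The support of $F$ is exactly the set of divisors $E_i$ with $a_i<0$. To contract every exceptional divisor, I would instead use $\Delta_W'=\Delta_W+\epsilon E$ with $0<\epsilon\ll1$. Then $K_W+\Delta'_W+\bbeta_W\equiv_X F+\epsilon E$, which is effective, exceptional, and has support equal to all of $E$. The pair stays generalized klt, and since $W$ is smooth it is strongly $\Q$-factorial.

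Next, run a $(K_W+\Delta'_W+\bbeta_W)$-MMP over $X$. Because $\pi$ is projective and the pair is generalized klt with boundary big over $X$ (anything is big over a bimeromorphic base), the relative MMP with scaling exists and terminates for projective morphisms of complex analytic spaces over a relatively compact base. Here I would invoke \cite{Fuj22} (with the generalized-pair extensions in \cite{DH23,DH25}), which work locally over compact subsets of $X$. Since $X$ is compact this gives a global output $\nu\colon X'\to X$. Every step is projective over $X$, so $\nu$ is projective, $X'$ is normal, and the strong $\Q$-factoriality of $W$ is preserved along the steps. I need to double-check that strong $\Q$-factoriality (as opposed to $\Q$-factoriality) is preserved in the analytic category.

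On $X'$ the class $K_{X'}+\Delta'_{X'}+\bbeta_{X'}\equiv_X F'+\epsilon E'$ is $\nu$-nef, and $F'+\epsilon E'$ is effective and $\nu$-exceptional. The negativity lemma for the bimeromorphic map $\nu$ then gives $-(F'+\epsilon E')\ge0$, so $F'+\epsilon E'=0$. Since its support is all of $E'$, every $\pi$-exceptional divisor has been contracted, and $\nu$ is small.

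I expect the main obstacle to be the existence and termination of this relative MMP in the analytic generalized setting, over a base $X$ that is only compact and in general not K\"ahler. The points to check are these:
\begin{enumerate}
\item that \cite{Fuj22} applies to generalized pairs when the nef part is only nef over $X$;
\item that the local MMPs glue to a global one over the compact base $X$; I would try to avoid gluing by running the MMP with scaling of a $\pi$-ample divisor, which is global because $\pi$ is projective;
\item that the strong $\Q$-factoriality of $W$ passes to $X'$, i.e.\ that every Weil divisor and every relevant $(1,1)$-class on $X'$ stays $\Q$-Cartier after flips and divisorial contractions.
\end{enumerate}
For the last point I would use that flips and divisorial contractions preserve the corresponding property via the relative cone and contraction theorem. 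A fallback is to take a relative good minimal model (Fujiki's finiteness of models) and then use the uniqueness of small $\Q$-factorial models up to flops.
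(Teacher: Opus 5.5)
Your argument is correct in outline, but it takes a genuinely different route from the paper. You run one global BCHM-style MMP over $X$ starting from a log resolution. This is essentially the construction in the proof of Theorem \ref{t-dltmodel}, specialized to the klt case. Strong $\Q$-factoriality then comes for free from the smoothness of $W$, provided it is preserved along the MMP.

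The paper never runs a global MMP for this statement. Instead:
\begin{itemize}
\item It uses local small $\Q$-factorializations over relatively compact Stein opens $V_i$ to show that each ${\rm Cl}_\R(V_i)\cong H^0(V_i,R^2\nu_*\R)$ is finite dimensional.
\item It picks finitely many global divisorial sheaves $L_1,\dots,L_r$ generating the image of ${\rm Cl}_\R(X)\to\bigoplus_i{\rm Cl}_\R(V_i)$.
\item It makes their strict transforms $\Q$-Cartier via Proposition \ref{t-2}: over each $V_i$ it takes the lc model of $(V_i',B_i'+\Delta_i'+\epsilon D_i')$. This model is independent of $\epsilon$, so these local models glue by uniqueness of lc models.
\end{itemize}
The paper's route needs only local existence results and uniqueness of lc models, and it handles divisorial sheaves directly rather than global Weil divisors. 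Yours is more direct but depends on the global relative MMP.

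The obstacles you flag are all resolvable with tools already in the paper.

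For (1), note that $K_W+\Delta'_W+\bbeta_W-(F+\epsilon E)=\pi^*(K_X+B+\bbeta_X)$. Hence $\bbeta_W\equiv_X \mathbf N_W:=F+\epsilon E-K_W-\Delta'_W$, which is a $\pi$-nef $\R$-divisor. Replacing $\bbeta$ by the $\R$-Cartier b-divisor $\overline{\mathbf N_W}$ puts you directly in the setting of Theorem \ref{t-projcone}. This is the same trick used in the proof of Theorem \ref{t-dltmodel}, and the proof of Theorem \ref{t-projcone} does not rely on Theorem \ref{t-3}.

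For (2), scaling by a $\pi$-ample divisor does not by itself avoid gluing. Contractions and flips are still built over Stein opens and glued by uniqueness of lc models, and termination is checked locally via \cite{Fuj22}. That is exactly what Theorem \ref{t-projcone} packages.

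For (3), the argument of \cite[Lemma 2.5]{DH25} works. Let $M$ be a Cartier multiple of the strict transform of a \emph{global} divisorial sheaf. Then $M$ is a global line bundle, so its intersection numbers are determined on the global extremal ray. Twist $M$ by a multiple of the exceptional divisor, or of $K_W+\Delta'_W+\mathbf N_W$ for a flip, so that it is numerically trivial over the contraction. The relative base point free theorem then descends it locally over the base.

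With these points filled in, your negativity-lemma step giving $F'+\epsilon E'=0$ goes through, and $\nu$ is small.
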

This result is shown for K\"ahler threefolds with generalized klt singularities, and in higher dimensions locally over a relative compact Stein open neighborhood of $X$ in \cite{DHY23}.

The next theorem establishes the existence of global dlt modifications for generalized pairs.

\begin{theorem}[{Global dlt models, Theorem \ref{t-dltmodel}}]\label{t-dltmodel-i}Let $X$  be a compact analytic variety and  $(X,B+\bbeta )$ a generalized pair. Then there exists a birational projective morphism $f^{\rm m}:X^{\rm m}\to X$ such that $X^{\rm m}$ is strongly $\Q$-factorial, all $f^{\rm m}$-exceptional divisors $P$ have discrepancy $a(P,X,B+\bbeta)\leq -1$ and
 $(X^{\rm m}, B^{\rm m})$ is dlt where $B^{\rm m}={(f^{\rm m})}^{-1}_*(B\wedge {\rm Supp}(B))+{\rm Ex}(f^{\rm m})$.
\end{theorem}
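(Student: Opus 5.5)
We prove Theorem \ref{t-dltmodel-i} by the usual strategy from the projective setting: take a log resolution, then run a relative MMP over $X$ that contracts the divisors with discrepancy $>-1$.

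\textbf{Step 1: resolution and boundary.} Let $g\colon W\to X$ be a projective log resolution of $(X,{\rm Supp}(B))$ on which $\bbeta$ descends, so $\bbeta_W$ is nef over $X$. Such a $g$ exists since $X$ is compact: resolve, then blow up further. Put
$$B_W=g^{-1}_*(B\wedge {\rm Supp}(B))+{\rm Ex}(g).$$
Since $W$ is smooth, $(W,B_W+\bbeta)$ is a generalized dlt pair with simple normal crossing boundary. Write
$$K_W+B_W+\bbeta_W=g^*(K_X+B+\bbeta_X)+E,$$
with
$$E=\sum_P \bigl(1+a(P,X,B+\bbeta)\bigr)P+\sum_{P\subset g^{-1}_*B}\bigl(b_P\wedge 1-b_P\bigr)P.$$
Here $E$ is supported on ${\rm Ex}(g)$ together with the strict transforms of the components with $b_P>1$. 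Its $g$-exceptional part is positive exactly along the exceptional $P$ with $a(P,X,B+\bbeta)>-1$. These are the divisors to be contracted.

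\textbf{Step 2: relative MMP.} Run a $(K_W+B_W+\bbeta_W)$-MMP over $X$ with scaling of a $g$-ample divisor. Since $\bbeta_W$ is nef over $X$, we can perturb. Choose a $g$-ample $H$ with $\bbeta_W+\epsilon H$ numerically equivalent over $X$ to an effective $\mathbb{Q}$-divisor $A$ in general position. Then $(W,B_W+A)$ is a genuine dlt pair, and after a small perturbation of the boundary it is klt. Over $X$ the MMP is relatively projective, and the needed relative results exist for analytic spaces projective over a compact base: Fujino's analytic MMP \cite{Fuj22} and \cite{DHY23}. With these, the MMP terminates once we know $E$ is in the correct position.

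The key point is the negativity lemma argument. Over $X$, $K_W+B_W+\bbeta_W\equiv_X E$, and we may write $E=E^{+}-E^{-}$ with $E^{-}$ non-exceptional or zero. More carefully, we can ensure every component of $E$ is either exceptional or has relatively trivial contribution; the non-exceptional part pulls back from $X$ up to an exceptional correction. The MMP with scaling then terminates with a model $X^{\rm m}$ on which the pushforward of $E$ is nef over $X$ and exceptional. By the negativity lemma it is $\le 0$, hence $0$, since its coefficients are $\geq 0$. So every exceptional divisor with $a>-1$ is contracted. Every remaining exceptional divisor has coefficient $1$ in $B^{\rm m}$ and discrepancy $\le -1$.

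\textbf{Step 3: properties of the output.} $X^{\rm m}$ is strongly $\Q$-factorial because $W$ is smooth and the steps of the MMP preserve strong $\Q$-factoriality. Dlt-ness of $(X^{\rm m},B^{\rm m})$ is preserved along the MMP. We use the actual pair $(W,B_W)$ rather than the generalized one, using that $\bbeta_W$ is nef over $X$ and absorbing it into an ample perturbation. Projectivity of $f^{\rm m}$ follows since each step is projective over $X$.

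The main obstacle is Step 2: termination of the relative MMP for $E$ not effective. The standard fix, following Hacon's dlt-model argument, is to view $E$ as very exceptional over $X$ after removing the pullback part. Then termination with scaling follows from the relative BCHM-type results over a compact analytic base (\cite{Fuj22}), combined with the special termination/very-exceptional argument that contracts ${\rm Supp}\,E^{+}$.
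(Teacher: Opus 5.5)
There is a genuine gap, and it sits exactly where you flag it in Step 2: you never reduce to an MMP that the available machinery can run and terminate.

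Theorem \ref{t-projcone} (resting on \cite{Fuj22}) handles generalized \emph{klt} pairs. Your pair $(W,B_W+A)$ is only dlt, with reduced part containing all of ${\rm Ex}(g)$. Special termination and Birkar-type ``very exceptional'' termination are not available in this analytic setting. They would not apply anyway, because $E$ is not effective: an exceptional $P$ with $a(P,X,B+\bbeta)<-1$ has coefficient $1+a(P)<0$. So your claim that $E^-$ is non-exceptional is false. Nor is the non-exceptional part a pullback, since components of $B$ need not be $\R$-Cartier.

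Your two perturbations do not close the gap either. Lowering the coefficients of $\lfloor B_W\rfloor$ changes the divisor, so the klt MMP you can run is not the one you need. Replacing $\bbeta_W$ by $A\equiv_X\bbeta_W+\epsilon H$, with $H$ $g$-ample but not $g$-exceptional, means that on the output you only know $E^{\rm m}+\epsilon H^{\rm m}$ is nef over $X$. The negativity lemma says nothing about that class, since $g_*H\neq 0$. By contrast, your final step is fine once you have termination for the right divisor: the negativity lemma gives $E^{\rm m}\le 0$ directly. You do not need the false statement that the coefficients of $E$ are $\ge 0$.

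The missing idea, which is what the paper does following \cite{Fil20}, is to make the ample perturbation with an \emph{exceptional} anti-ample divisor. Take $g$ to be a composition of blow-ups, so there is an effective $g$-exceptional $C$ with $-C$ $g$-ample. Then $\bbeta_W-\mu C$ is $g$-ample, hence $\equiv_X H_1\ge 0$ with $H_1$ general. With $R=\lfloor B_W\rfloor$ and $0<\epsilon\ll1$,
\[B_W+H_1\equiv_X (B_W-\epsilon\mu R)+H_2,\qquad H_2\equiv_X \bbeta_W-\mu C+\epsilon\mu R,\]
and $H_2$ is still $g$-ample by openness of the relative ample cone.

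So the $(K_W+B_W+H_1)$-MMP over $X$ is literally an MMP for the klt pair $(W,B_W-\epsilon\mu R+H_2)$. Since $g$ is birational, this terminates by Theorem \ref{t-projcone}, and $(W,B_W+H_1)$ stays dlt along it. On the output, $K_{X^{\rm m}}+B^{\rm m}+H_1^{\rm m}\equiv_X E^{\rm m}-\mu C^{\rm m}$ is nef over $X$, so the negativity lemma gives $E^{\rm m}-\mu C^{\rm m}\le 0$. Fix $\mu<(1+a(P))/{\rm mult}_PC$ beforehand for the finitely many exceptional $P$ with $a(P)>-1$; this forces all of them to be contracted.

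The paper instead gives the divisors with $-1<a<0$ coefficient $(1+\nu)(-a)$ and uses $0<\mu\ll\nu$. With your boundary $B_W$, the same trick works without $\nu$.
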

Following the ideas of \cite{Fil20}, the existence of dlt models was established
in \cite{HP24} under the additional assumption that $X$ is relatively compact
and Stein. We observe that a similar proof applies, with minor changes, once the relative
minimal model program is available, see Theorem~\ref{t-projcone}.

Finally, we establish some results on the relative minimal model program for Moishezon contraction morphisms. %which are not necessarily projective. 
Although these results are used in our paper as an auxiliary ingredient, we present them here as they may be useful in other contexts.

\begin{proposition}[{Proposition \ref{p-relmmp}} and Theorem \ref{t-3}]
Let $f:X\to Y$ be a contraction morphism of normal complex analytic varieties. Assume that $(X,B+\bbeta )$ is a generalized klt pair such that $B+\bbeta_X$ is modified big over $Y$ and $\bbeta \equiv _Y \mathbf N$ where $\mathbf N$ is an $\R$-Cartier b-divisor. Then
\begin{enumerate}\item If $K_X+B+\bbeta_X$ is pseudo-effective over $Y$, then $(X,B+\bbeta)$ has a good minimal model over $Y$.
\item  If $K_X+B+\bbeta_X$ is not pseudo-effective over $Y$, then $(X,B+\bbeta)$ is birational to a Fano fibration  over $Y$.
\end{enumerate}
\end{proposition}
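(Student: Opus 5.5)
The plan is to reduce to the known relative MMP results for generalized pairs over a relatively compact Stein base, and then globalize over $Y$. The key tool is the assumption $\bbeta\equiv_Y \mathbf N$ with $\mathbf N$ an $\R$-Cartier b-divisor. It lets us replace the nef b-class $\bbeta$ by an honest (b-)divisor up to numerical equivalence over $Y$. Then $K_X+B+\bbeta_X\equiv_Y K_X+B+\mathbf N_X$, and the relative MMP over $Y$ only sees classes modulo $\equiv_Y$.

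\textbf{Step 1: passing to a projective situation.} Take a log resolution $g:X'\to X$ on which $\mathbf N$ descends, and which is projective over $X$. Since $B+\bbeta_X$ is modified big over $Y$, we can write $\bbeta_{X'}\equiv_Y A+E$ with $A$ ample over $Y$ (after possibly a further blow-up) and $E\ge 0$. Perturbing as in the projective case, the generalized klt pair becomes a usual klt pair $(X',\Delta')$ with $\Delta'$ big over $Y$. Here $\Delta'$ is built from $g^{-1}_*B$, the exceptional divisors with suitable coefficients, $\epsilon E$, and a general member of the $\R$-linear system of $(1-\epsilon)\mathbf N_{X'}+\epsilon A$. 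This matches $K+B+\bbeta$ up to $\equiv_Y$ and up to exceptional divisors with positive discrepancy.

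\textbf{Step 2: MMP over $Y$ and gluing.} The morphism $X'\to Y$ is Moishezon but not necessarily projective. This is the main obstacle. Locally over any relatively compact Stein open $U\subset Y$, the results of \cite{Fuj22} (finiteness of models, MMP with scaling for klt pairs with big boundary) give a good minimal model or a Mori fiber space. To globalize, I would use uniqueness. The log canonical model $\operatorname{Proj}_Y\bigoplus f_*\mathcal O(m(K+\Delta'))$ is canonical, so the local models glue once finite generation holds, and finite generation is local over $Y$. For the good minimal model, I would take a small $\Q$-factorial modification of the glued lc model. This uses Theorem \ref{t-3}'s small $\Q$-factorialization, applied over $Y$ by the same local-finiteness argument. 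The result is a model on which $K+\Delta'$ is semiample over $Y$. Alternatively, one can run the MMP with scaling of a relatively big divisor, using the finiteness of models from \cite[Theorem E]{Fuj22} on a finite cover of a compact neighborhood of $Y$. Either way, the issue is that contractions are defined over all of $Y$ only after the cone theorem, and the cone theorem holds over compact subsets of $Y$.

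\textbf{Step 3: the non-pseudo-effective case.} Choose the scaling threshold $t=\inf\{s: K+\Delta'+sH \text{ pseff}/Y\}>0$, where $H$ is relatively ample on $X'$. Apply Step 2 to $K+\Delta'+tH$, which is pseudo-effective but not big over $Y$. Its lc model $Z\to Y$ has $\dim Z<\dim X$. A minimal model on which $K+\Delta'+tH$ is semiample gives a fibration to $Z$ along which $-(K+\Delta')$ is big and ample modulo $H$. After a final relative MMP over $Z$, this yields a Fano fibration birational to $X$. Transporting back through $\equiv_Y$ gives the statement for $(X,B+\bbeta)$.
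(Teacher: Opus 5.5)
There is a genuine gap in Step 2, and Step 3 inherits it. The hypotheses make only $B+\bbeta_X$ (modified) big over $Y$, not $K_X+B+\bbeta_X$. So in case (1) the lc model $\operatorname{Proj}_Y\bigoplus f_*\mathcal O(m(K+\Delta'))$ may have dimension $<\dim X$. A small $\Q$-factorial modification of it is then not a model of $X$ at all, let alone a minimal model. Step 3 needs exactly this non-big situation, because at the pseudo-effective threshold $K+\Delta'+tH$ is pseudo-effective but not big over $Y$. Your fallback (MMP with scaling plus finiteness of models) is left undeveloped; it could be completed by citing Theorem \ref{t-projcone}(3)--(5), but you instead leave the global-contraction issue open.

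The missing idea, which the paper uses, is to take the lc model of the \emph{big} class $K+\Delta'+\epsilon A$, with $A$ ample over $Y$ and $0<\epsilon\ll 1$. By Lemma \ref{l-term} this model is independent of $\epsilon$ and has a morphism to the lc model of $K+\Delta'$. Hence $K+\Delta'$ is semiample on it, and it is a (not necessarily $\Q$-factorial) good minimal model. Since it is an lc model it is unique, so the local models over a Stein cover $Y=\bigcup Y_i$ with property (P) glue, and Theorem \ref{t-3} then $\Q$-factorializes. The same device gives (2) directly, with no extra MMP over $Z$. The lc model $M$ of $K+\Delta'+(\tau+\epsilon)A$ maps to the lc model $Z$ of $K+\Delta'+\tau A$, with $\dim Z<\dim M$. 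On $M$ one has $-(K_M+\Delta_M)\equiv_Z \tau A_M\equiv_Z \frac{\tau}{\epsilon}(K_M+\Delta_M+(\tau+\epsilon)A_M)$, which is ample over $Z$.

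Step 1 also needs repair, in three places.
\begin{itemize}
\item The claim $\bbeta_{X'}\equiv_Y A+E$ is false in general; take $\bbeta=0$ with $B$ big. What modified bigness actually gives, via Lemma \ref{l-modnef}, is that the $\R$-Cartier divisor $B'^{\geq 0}+\mathbf N_{X'}+\epsilon\,{\rm Ex}(\nu)$ is big over $Y$. This is what makes $X'\to Y$ Moishezon.
\item You note that $X'\to Y$ need not be projective, yet you already used an $A$ ample over $Y$ on $X'$. You then apply \cite{Fuj22} locally over Stein opens, but localizing does not help: Moishezon morphisms need not be locally projective, and \cite{Fuj22} needs projective morphisms. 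The paper instead replaces $X'$ by a higher model that is projective over $Y$, which exists because the morphism is Moishezon.
\item Over a compact, possibly non-projective $Y$, general members of $\R$-linear systems of relatively ample divisors exist only over Stein opens. So a global effective $\Delta'$ may not exist, and the boundaries $\Delta_i$ must be chosen locally on each $Y_i$. The gluing is then of lc models of the intrinsic class $K+B+\mathbf N_X$.
\end{itemize}
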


\begin{proposition}[{Proposition \ref{c-relmmp} and Theorem \ref{t-3}}] Let $f:X\to Y$ be a bimeromorphic morphism of normal compact K\"ahler varieties with rational singularities. Assume that $(X,B+\bbeta )$ is a strongly $\Q$-factorial generalized klt pair. %such that $B+\bbeta_X$ is big over $Y$. 
Then we can run a $(K_X+B+\bbeta_X)$-relative minimal model program over $Y$ which ends with a good minimal model over $Y$.
\end{proposition}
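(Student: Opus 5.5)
We prove the last proposition: $f:X\to Y$ is a bimeromorphic morphism of compact K\"ahler varieties with rational singularities, and $(X,B+\bbeta)$ is a strongly $\Q$-factorial generalized klt pair. The plan is to reduce to the previous proposition (Proposition \ref{p-relmmp}). Since $f$ is bimeromorphic, every class is big over $Y$; in particular $B+\bbeta_X$ is modified big over $Y$. Also $K_X+B+\bbeta_X$ is pseudo-effective over $Y$, because the generic fiber is a point.

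The remaining hypothesis is that $\bbeta\equiv_Y \mathbf N$ for some $\R$-Cartier b-divisor $\mathbf N$. I will get this from the rational singularities. First, $\bbeta$ descends to a closed positive $(1,1)$-class $\beta'$ on some resolution $X'\to X$ with $X'\to Y$ projective. Because $Y$ has rational singularities, $R^1 g_*\mathcal O_{X'}=0$ for $g:X'\to Y$. The exponential sequence then shows that $H^2(X',\R)\to H^0(Y,R^2g_*\R)$ has image spanned by relative divisor classes. Concretely, the relative Picard group tensored with $\R$ surjects onto the relative $(1,1)$-classes, so $\beta'$ is $\equiv_Y$ to an $\R$-Cartier divisor $N'$ on $X'$, and we set $\mathbf N=\overline{N'}$.

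With all hypotheses verified, Proposition \ref{p-relmmp} gives a good minimal model over $Y$. We need more than that: an actual MMP run on $X$ itself. For this I will run a $(K_X+B+\bbeta_X)$-MMP over $Y$ with scaling of a relatively ample divisor. The steps are as follows.
\begin{enumerate}
\item Because $f$ is bimeromorphic and $X$ is strongly $\Q$-factorial, the relative cone and contraction theorems of Theorem \ref{t-projcone} apply over $Y$, locally over relatively compact Stein opens of $Y$.
\item Flips exist by the relative version of Theorem \ref{t-3} and the existence of generalized flips.
\item Termination follows from the bigness of $B+\bbeta_X$ over $Y$ together with finiteness of models, as in BCHM.
\end{enumerate}

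The main obstacle is globalizing over compact $Y$. Cone, contraction and MMP statements are typically local over Stein opens, and the steps chosen on different opens must glue. I would handle this by using scaling: extremal rays are determined by the global scaling numbers, the contractions of rays are unique, and compactness of $Y$ gives finitely many opens. Strong $\Q$-factoriality must also be preserved along the MMP, which is where the K\"ahler and rational-singularity hypotheses enter.

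Finally, the output is a good minimal model because relative semiampleness holds: $K+B+\bbeta$ becomes $\equiv_Y$ to an $\R$-Cartier divisor, and the basepoint-free theorem applies over $Y$.
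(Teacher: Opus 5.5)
There is a genuine gap. The tools you cite do not apply to $f$ as given, and the ``gluing by scaling'' step is not an argument. Theorem \ref{t-projcone} is stated for a \emph{projective} morphism and a pair $(X,B+\mathbf N)$ with $\mathbf N$ an $\R$-Cartier b-divisor. Here $f$ is only a bimeromorphic morphism of compact K\"ahler spaces, and you never show it is projective.

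Locally over Stein opens one can get divisorial representatives; this is exactly Lemma \ref{l-rel}. But local cone theorems over the $Y_i$ do not produce a global extremal ray, a global supporting class, or a global contraction. Different opens can see different faces. A global scaling threshold does not make the curves achieving it span a ray that can be contracted on all of $X$.

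Your global claim that $\bbeta\equiv_Y\mathbf N$ for an $\R$-Cartier b-divisor is also unjustified. The vanishing $R^1g_*\OO_{X'}=0$ only gives $R^1g_*\OO^*_{X'}\cong R^2g_*\mathbb Z$. Lifting a global section of this sheaf to a line bundle on $X'$ is obstructed in $H^2(Y,\OO_Y^*)$. Compare Remark \ref{l-anvsalg}: on a K\"ahler manifold, line bundles can fail to separate curves that $(1,1)$-classes do separate. So the reduction to Proposition \ref{p-relmmp} and your final ``basepoint-free over $Y$'' step are only valid locally over $Y$. Also, Theorem \ref{t-3} produces small $\Q$-factorializations, not flips.

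The missing idea is to use the \emph{absolute} cone theorem for the compact K\"ahler pair $(X,B+\bbeta)$ (Corollary \ref{c-conetheorem}) rather than a relative projective one. The paper fixes a K\"ahler form $\omega_Y$ with $\omega_Y\cdot C>2\dim X$ for every curve $C\subset Y$. Then any $(K_X+B+\bbeta_X+f^*\omega_Y)$-negative extremal ray is spanned by a rational curve $C$ with $(K_X+B+\bbeta_X)\cdot C\geq -2\dim X$. This forces $f^*\omega_Y\cdot C<2\dim X$, so $C$ is $f$-vertical, and the ray is cut out by a global nef class $K_X+B+\bbeta_X+\omega$ with $\omega$ K\"ahler.

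Only then does the argument go local. Over each Stein $Y_i$, Lemma \ref{l-rel} (using the rational singularities) turns $\bbeta+\bar\omega$ into an $\R$-line bundle and hence a klt boundary. The basepoint-free theorem gives the contraction over $Y_i$, and uniqueness of log canonical models glues these local contractions, and the flips, into global $Z$ and $X^+$. Since $Y$ is K\"ahler and $Z$, $X^+$ are K\"ahler over $Y$, they are again compact K\"ahler, so the cone theorem can be reapplied at the next step. This is where the K\"ahler hypothesis enters, not in preserving strong $\Q$-factoriality as you suggest.

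Termination then follows, as you say, from finiteness of models locally over the $Y_i$. The good minimal model comes from local basepoint-freeness and gluing.
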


\subsection{Strategy of the proof}
Let us briefly sketch the idea of the proof and give an outline of this paper. We begin with the proof of Theorem \ref{t-contK}. In the projective setting, if $(X,B)$ is a projective klt pair and $f: X \to Y$ is a flipping or divisorial contraction, then the projectivity of $Y$ follows from the base point free theorem. Indeed, the supporting divisor of the contraction $K_X+B+H$ (for some ample divisor $H$ on $X$) is semiample, and it is the pullback of an ample divisor on $Y$. Hence $Y$ is projective. In the K\"ahler setting the transcendental base point free theorem was not available at the time of writing this paper (see \cite[Conjecture~1.6]{DH25}), so a different argument was required. Note that the transcendental base point free conjecture has recently been established in \cite{HX26}. Our arguments do not rely on \cite{HX26}, however several arguments of \cite{HX26} rely on the results of this paper.

Let $
\alpha=[K_X+B+\bbeta_X+\omega]$ 
be a supporting class of the contraction $f:X\to Y$, where $\omega$ is a K\"ahler form on $X$. We first claim that there exists a nef and big class $\alpha_Y$ on $Y$ such that $
\alpha=f^*\alpha_Y$. Note that this can be shown by arguing locally over the base $Y$ and that since $f$ is birational, it is Moishezon. Passing to a resolution, we may assume that $f$ is projective and then we apply the methods of \cite{Fuj22}. 
It remains to prove that $\alpha_Y$ is K\"ahler. For this purpose, we apply Theorem \ref{t-ray}, which serves as a substitute for the Kleiman's criterion for K\"ahlerness used in the threefold setting in \cite{HP16,CHP16,DHY23}. This theorem is a variant of \cite[Proposition 3.1]{HP24} with the important difference that we do not assume $\Q$-factoriality and K\"ahlerness of the varieties. Roughly speaking, Theorem \ref{t-ray} says that if the nef and big class $\alpha_Y$ is not K\"ahler, then its null locus is covered by $\alpha_Y$-trivial curves. The proof follows the strategy of \cite{HP24}: we consider a maximal dimensional irreducible
component $Z$ of the null locus of $\alpha_Y$, take a
smooth model $Z'$ of $Z$, and study the MRC fibration
$\varphi: Z'\dasharrow W$, whose general fiber we denote by $F$. We then show that $Z'$ is uniruled and that $\alpha_Y|_F$ is not big. This produces a fiber type contraction $F\to \bar F$, and hence $F$ is covered by $\alpha_Y$-trivial curves. The main difference between our approach and \cite{HP24} is that, a priori $Y$ is only Fujiki, there is no ambient K\"ahler class available. We therefore replace the intersection-theoretic argument of \cite{HP24} by an MMP
argument, which converts the relative numerical triviality relation into an actual cohomological pullback from
the base, 
so that \cite[Theorem 2.2]{HP24} applies and gives the uniruledness of $Z'$.

We next turn our attention to the proof of Theorem \ref{c-1}. Since $X$ belongs to Fujiki's class $\mathcal C$, we choose a resolution $
\nu: X'\to X$ such that $\nu$ is projective and $X'$ is K\"ahler.
After fixing a sufficiently general K\"ahler class $\omega'$ on $X'$, we consider the relative nef threshold $t$ of $
K_{X'}+B'+\bbeta_{X'}
$
over $X$ with respect to $\omega'$. If this threshold is zero, it follows that the resolution has no exceptional divisors, and we obtain a small morphism. Combining this with Theorem \ref{t-smallQfactorial} we obtain the small $\Q$-factorial K\"ahler modification, and we are in case (2) of the theorem.

We may therefore assume that the relative nef threshold $t$ is positive. We then take the relative $(K_{X'}+B'+\bbeta_{X'}+t\omega')$-log canonical model over $X$. By the cone theorem, this either produces a rational curve $C\subset X$ such that $
-[C]\in \overline{\rm NA}(X)$, 
which is case (1) of the theorem, or it produces a divisorial or flipping contraction. In the latter case, Theorem \ref{t-contK} shows that the resulting space is again K\"ahler, so the procedure can be repeated. The finiteness of log canonical models \cite[Theorem E]{Fuj22} implies that this process terminates and gives the desired small $\Q$-factorial K\"ahler modification.

Finally, we outline the ideas behind the proofs of the small $\mathbb{Q}$-factorialization theorem and the dlt modification theorem. Locally, one can work on relatively compact Stein open subsets and apply the corresponding local results, namely \cite[Theorem 2.19]{DHY23} and \cite[Theorem 1.6]{HP24}. Thus the main difficulty is to globalize these local constructions. For small $\Q$-factorial modifications, we first show that there exists a finite collection of global rank-one reflexive sheaves which locally generate all rank-one reflexive sheaves. This reduces the problem to constructing a small $\Q$-factorial modification for this finite set of generators, which follows from Proposition \ref{t-2}. For dlt modifications, we follow the strategy of \cite{Fil20}: we run a relative MMP over the log resolution, carefully controlling the places being contracted by the MMP so that every exceptional divisor with discrepancy $a>-1$ is contracted, while preserving the dlt condition. The main difference in our setting is that we need Theorem \ref{t-projcone} to run the relative MMP for analytic varieties.

\subsection{Organization of the paper}

This article is organized in the following manner. In Section 2, we collect preliminary results used throughout the paper. In Section 3, we prove the existence of small $\Q$-factorializations and dlt modifications, namely Theorems \ref{t-smallQfactorial} and \ref{t-dltmodel-i}. Section 4 is devoted to the proof of Theorem \ref{t-contK}, which shows that flips and divisorial contractions preserve the K\"ahler condition. In the final section, we apply these results to prove the K\"ahlerness criterion, Theorem \ref{c-1}.

\medskip

\noindent

\textbf{Acknowledgements.} The authors would like to thank Mihai P\u{a}un for useful comments and suggestions.
\section{Preliminaries}

%In this section, we introduce some preparatory notions and results to be used in the rest part of the paper. 

A complex analytic variety $X$ is a \emph{K\"ahler variety} if there exists a K\"ahler
form $\omega$ on $X$, i.e., a positive closed real $(1,1)$-form
$\omega \in \mathcal{A}_{\mathbb{R}}^{1,1}(X)$ such that for every singular point
$x \in X$ there exists an open neighborhood $U \subseteq X$ of $x$, a closed embedding
$\iota_U : U \hookrightarrow V$ into an open subset $V \subseteq \mathbb{C}^N$,
and a strictly plurisubharmonic $C^{\infty}$ function $f : V \to \mathbb{R}$
satisfying $\left.\omega\right|_{U \cap X_{\mathrm{sm}}}
= \left.(i \partial \bar{\partial} f)\right|_{U \cap X_{\mathrm{sm}}}$.

A compact complex analytic variety $Y$ is said to be in \emph{Fujiki's class} $\mathcal C$, if there exists
a bimeromorphic modification $f : X \to Y$ such that $X$ is a compact K\"ahler
manifold (see \cite[Definition 2.2]{DH25} for more details). 

Given a normal complex analytic variety $X$, let $\mathcal{H}_X$ be the sheaf of real parts of holomorphic functions multiplied by $\sqrt{-1}$, and define the \textit{Bott--Chern cohomology} to be $H^{1,1}_{\rm{BC}}(X): =  H^1(X, \mathcal{H}_X)$. A normal complex analytic variety $X$ is said to be \textit{strongly $\mathbb{Q}$-factorial} if every divisorial sheaf is $\mathbb{Q}$-Cartier, i.e. for every $\mathcal{F} \in W(X)$ (where $W(X)$ denotes the group of divisorial sheaves) there exists $m \in \mathbb{N}^*$ such that the reflexive power $\mathcal{F}^{[m]}$ is locally free. 
 Given a contraction morphism $f: X\to Y$ between normal compact complex analytic varieties, we define the \textit{relative Bott--Chern Picard number} by $\rho_{\rm BC}(X/Y):=\dim H^{1,1}_{\rm BC}(X)- \dim H^{1,1}_{\rm BC}(Y)$.

\begin{lemma}\label{l-rel} Let $f:X\to Y$ be a proper bimeromorphic morphism of normal varieties  with rational singularities such that $Y$ is relatively compact and Stein. For any $\alpha\in H^{1,1}_{\rm BC}(X)$, there is an $\R$-line bundle $L\in H^1(X,\OO _X^*)\otimes \R$ such that $[L]=\alpha$.  
\end{lemma}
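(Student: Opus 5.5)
The plan is to use the exponential sequence together with the defining exact sequence of $\mathcal H_X$:
\[
0\to \R\cdot\sqrt{-1}\to \OO_X\xrightarrow{\ \sqrt{-1}\,{\rm Re}\ } \mathcal H_X\to 0 .
\]
More precisely, the map here is $f\mapsto \sqrt{-1}\,{\rm Re} f$, and its kernel is the constants $\sqrt{-1}\R$. The associated long exact sequence gives
\[
H^1(X,\OO_X)\to H^{1,1}_{\rm BC}(X)=H^1(X,\mathcal H_X)\to H^2(X,\R)\to H^2(X,\OO_X).
\]
So it suffices to show that $H^i(X,\OO_X)=0$ for $i=1,2$, or at least that the relevant maps behave well. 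Then $H^{1,1}_{\rm BC}(X)\cong H^2(X,\R)$.

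Next I will compare with line bundles. The exponential sequence gives
\[
H^1(X,\OO_X)\to H^1(X,\OO_X^*)\to H^2(X,\mathbb Z)\to H^2(X,\OO_X).
\]
If $H^1(X,\OO_X)=H^2(X,\OO_X)=0$, then $\operatorname{Pic}(X)\cong H^2(X,\mathbb Z)$. Tensoring with $\R$, $\operatorname{Pic}(X)\otimes\R$ surjects onto the image of $H^2(X,\mathbb Z)\otimes\R$ in $H^2(X,\R)$. That image is all of $H^2(X,\R)$ as long as $H^2(X,\R)$ is finite dimensional, or at least generated by integral classes. The first Chern class map $\operatorname{Pic}(X)\to H^{1,1}_{\rm BC}(X)$ is compatible with these identifications, since both factor through $H^2(X,\R)$ via the morphism of sequences from $\OO_X^*$ (via $\log|\cdot|$) to $\mathcal H_X$. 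This yields an $L$ with $[L]=\alpha$.

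The main step is the vanishing $H^i(X,\OO_X)=0$ for $i>0$. I expect this to be the obstacle, and I would handle it in two parts.

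First, since $Y$ is Stein and $Y$ has rational singularities, take a resolution $g:X'\to X$, so that $h=f\circ g:X'\to Y$ is a resolution of $Y$. Rational singularities give $Rh_*\OO_{X'}=\OO_Y$ and $Rg_*\OO_{X'}=\OO_X$. Hence $Rf_*\OO_X=Rf_*Rg_*\OO_{X'}=Rh_*\OO_{X'}=\OO_Y$, so $R^if_*\OO_X=0$ for $i>0$.

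Second, the Leray spectral sequence together with Cartan's Theorem B on the Stein space $Y$ gives $H^i(X,\OO_X)=H^i(Y,\OO_Y)=0$ for $i>0$.

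A subtlety is that the topological groups $H^2(X,\mathbb Z)$ may be infinitely generated, and $H^2(X,\R)$ need not equal $H^2(X,\mathbb Z)\otimes\R$ in general. Relative compactness of $Y$ is what I would use here. After possibly shrinking $Y$ slightly to a Stein neighborhood of the closure, $X$ is homotopy equivalent to a finite CW complex, since $f^{-1}$ of a relatively compact subset is relatively compact and analytic sets admit finite triangulations. Then $H^2(X,\mathbb Z)\otimes\R\cong H^2(X,\R)$ by universal coefficients, which completes the argument.
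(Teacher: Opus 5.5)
You have a genuine gap, and it is in the last step. The first part of your argument is correct. Composing resolutions gives $Rf_*\OO_X=\OO_Y$, then Leray and Theorem B give $H^i(X,\OO_X)=0$ for $i>0$, and this yields compatible identifications $H^{1,1}_{\rm BC}(X)\cong H^2(X,\R)$ and $\operatorname{Pic}(X)\cong H^2(X,\mathbb Z)$.

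The problem is that relative compactness of $Y$ says nothing about the homotopy type of the open set $X=f^{-1}(Y)$. Finite triangulations exist for compact analytic sets, but an open subset such as $X$ can have infinitely generated homology. ``Shrinking $Y$ to a Stein neighbourhood of its closure'' does not make sense, since such a neighbourhood contains $Y$. Replacing $Y$ by any other set also changes $X$, and hence the group $H^{1,1}_{\rm BC}(X)$ in which $\alpha$ lives. (Finite-dimensionality of $H^2(X,\R)$ alone would not suffice either; finite generation of $H_2(X,\mathbb Z)$ is what makes $H^2(X,\mathbb Z)\otimes\R\to H^2(X,\R)$ onto.)

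Because your reduction is an equivalence, this step cannot be completed from the stated hypotheses. Take $f={\rm id}$ and $Y=(\mathbb D\setminus K)\times(\mathbb D\setminus\{0\})\subset\mathbb C^2$ with $K=\{0\}\cup\{1/n\}_{n\ge 2}$. This is a relatively compact Stein manifold, and $H_2(Y,\mathbb Z)\cong H_1(\mathbb D\setminus K,\mathbb Z)$ is free of countably infinite rank. So the map $H^2(Y,\mathbb Z)\otimes\R=(\prod_{\mathbb N}\mathbb Z)\otimes\R\to\prod_{\mathbb N}\R=H^2(Y,\R)$ misses every sequence whose entries are linearly independent over $\mathbb Q$. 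The corresponding $\alpha\in H^{1,1}_{\rm BC}(Y)$ is therefore not the class of any $\R$-line bundle.

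The missing idea is that the needed finiteness comes from the compactness of the fibres of $f$, not from $Y$. This is why the paper argues with the sheaves $R^if_*$ rather than with the global cohomology of $X$. Rational singularities give $R^1f_*\R=0$, $d\colon R^1f_*\mathcal H_X\xrightarrow{\sim}R^2f_*\R$ and $\delta\colon R^1f_*\OO_X^*\twoheadrightarrow R^2f_*\mathbb Z$. The stalks $(R^2f_*\mathbb Z)_y=H^2(f^{-1}(y),\mathbb Z)$ are finitely generated, with $H^2(f^{-1}(y),\mathbb Z)\otimes\R=H^2(f^{-1}(y),\R)$, so $d(\alpha)$ is locally an $\R$-combination of classes $\delta(L_i)$.

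This argument is local over $Y$ in nature. Equivalently, your global argument goes through verbatim once $Y$ is a small Stein neighbourhood of a point, so that $f^{-1}(Y)$ retracts onto the compact fibre. That local version is how the lemma is applied later in the paper, on small Stein pieces $Y_i$ of a cover.
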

\begin{proof} Since $X,Y$ have rational singularities, then $R^if_*\OO _X=0$ for $i>0$.
Since $f_*\OO _X=\OO _Y$ and $f_*\mathcal H _X=\mathcal H_Y$ (by \cite[Lemma 2.5]{DH24}), pushing forward the short exact sequence \[0\to \R\to \OO _X\to \mathcal H _X\to 0,\]it follows that $R^1f_*\R=0$ and $d:R^1f_*\mathcal H _X\to R^2f_*\R$ is an isomorphism. 
Pushing forward the short exact sequence
\[0\to \mathbb Z\to \OO _X\to \OO ^*_X\to 0,\] one sees that $\delta: R^1f_*(\OO_X^*)\to R^2f_*\mathbb Z$ is surjective.
Therefore we may pick $L=\sum r_iL_i$ where $L_i \in R^1f_*(\OO_X^*)$ and $r_i\in \R$ such that $\delta (L):=\sum r_i\delta(L_i)=d(\alpha)$ i.e. $[L]=\alpha$.
\end{proof}

\subsection{Positivity of Bott--Chern classes} We next introduce some positivity notions that will be used throughout this paper. 
\begin{definition}
Let $X$ be a normal compact complex analytic variety, and let $\alpha\in H^{1,1}_{\rm{BC}}(X)$ be a $(1,1)$-class. 
\begin{enumerate}
\item The class $\alpha \in H^{1,1}_{\rm{BC}}(X)$
is called \emph{K\"ahler} if it contains a K\"ahler form $\omega$; in this case $X$ is itself a K\"ahler variety.
\item The class $\alpha \in H^{1,1}_{\rm{BC}}(X)$ is called \emph{nef} if for some
positive $(1,1)$-form $\omega$ on $X$, for every $\varepsilon > 0$ there exists
$f_{\varepsilon} \in \mathcal{A}^{0}(X)$ such that
$$
  \alpha + i\partial\bar{\partial} f_{\varepsilon} \geq -\varepsilon\,\omega.
$$
\item The class $\alpha \in H^{1,1}_{\rm{BC}}(X)$ is \emph{pseudo-effective} if it contains
a positive current $T \geq 0$.
\item The class $\alpha \in H^{1,1}_{\rm{BC}}(X)$ is
called \emph{big} if it contains a K\"ahler current $T$, that $T\ge \omega$ for some Hermitian metric $\omega$ on $X$. If $f:X\to Y$ is a birational morphism and $\alpha \in H^{1,1}_{\rm{BC}}(X)$ is big, then we say that $f_*\alpha$ is {\it modified big}. 
\item The class $\alpha \in H^{1,1}_{\rm{BC}}(X)$ is called \textit{semi-ample} if there exists a contraction morphism $f:X\to Z$ with a K\"ahler class $\omega_Z$ such that $\alpha = f^* \omega_Z$.  
\end{enumerate}
\end{definition}
\begin{remark}
When $X$ is a K\"ahler variety, it is easy to see that a class $\alpha \in H^{1,1}_{\rm{BC}}(X)$ 
is nef if and only if it lies in the closure of the K\"ahler cone (here K\"ahler cone means the non-empty open convex cone generated 
by all K\"ahler classes in $H^{1,1}_{\rm{BC}}(X)$).
\end{remark}
\begin{remark}
It is well known that $\alpha \in H^{1,1}_{\rm{BC}}(X)$ is big if and only if there is a K\"ahler current $T\in \alpha$. We also remark that if $\alpha$ is big and $f:X'\to X$ is a birational morphism, then $f^*\alpha$ is also big (cf. \cite[Proposition 2.30]{HLR26}).
\end{remark}

% \begin{lemma}
% Let $f:X\to Y$ be a proper bimeromorphic morphism. Let $G$ be an effective $f$-exceptional divisor, and $\alpha \in H^{1,1}_{\rm{BC}}(X)$. Then
% $$\text{vol}(f^* \alpha + G) = \text{vol}(\alpha).$$
% In particular, if $f^* \alpha + G$ is big then $\alpha$ is big.
% \end{lemma}
% \begin{proof}

% \end{proof}

\begin{lemma}\label{l-modnef}
Let $(X,B+\bbeta)$ be a generalized pair, and assume that $B+\bbeta_X$ is modified big. Then there exists a birational morphism $\nu :X'\to X$ and an effective $\nu$-exceptional divisor $E$ such that if $K_{X'}+B'+\bbeta _{X'}=\nu ^*(K_X+B+\bbeta _X)$, then $B'+\bbeta_{X'} + E$ is big and $B'^{\geq 0}+\bbeta_{X'} +\epsilon E$ is big for any $\epsilon >0$. 
\end{lemma}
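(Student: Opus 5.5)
Since $B+\bbeta_X$ is modified big, by definition there is a birational morphism $g:X''\to X$ and a big class $\gamma\in H^{1,1}_{\rm BC}(X'')$ with $g_*\gamma=B+\bbeta_X$. The plan is to pass to a common higher model $\nu:X'\to X$ that factors through $X''$, is a log resolution of $(X,B)$, and on which $\bbeta$ descends. Then I compare the pullback of $\gamma$ with $B'+\bbeta_{X'}$. Here $B'$ is defined by $K_{X'}+B'+\bbeta_{X'}=\nu^*(K_X+B+\bbeta_X)$, so $B'$ may have negative coefficients along exceptional divisors.

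\textbf{Step 1 (constructing $E$).} Let $h:X'\to X''$ be the induced map. The pullback $h^*\gamma$ is big, by the remark above. Both $h^*\gamma$ and $B'+\bbeta_{X'}$ push forward to $B+\bbeta_X$ on $X$. Their difference $D:=h^*\gamma-(B'+\bbeta_{X'})$ is therefore a class with $\nu_*D=0$. I expect $D$ to be represented by a $\nu$-exceptional $\R$-divisor. This should follow from the negativity lemma type description of $\ker\nu_*$ on a resolution, or by arguing locally over $X$ as in Lemma \ref{l-rel}; since $\bbeta$ descends, $\bbeta_{X'}$ is a genuine class and the difference is supported on $\mathrm{Ex}(\nu)$. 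Choose an effective $\nu$-exceptional divisor $E$ whose coefficients are at least those of $D$. Then $B'+\bbeta_{X'}+E=h^*\gamma+(E-D)$ is big plus effective, hence big. This gives the first claim.

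\textbf{Step 2 (passing to $B'^{\geq 0}$).} Write $B'=B'^{\geq0}-B'^{<0}$, where $B'^{<0}\ge 0$ is $\nu$-exceptional. Then
\[
B'^{\geq0}+\bbeta_{X'}+\epsilon E=B'+\bbeta_{X'}+\epsilon E+B'^{<0}.
\]
For $\epsilon\ge 1$ the right-hand side equals (big) $+$ (effective), so it is big. The issue is small $\epsilon$. Here I would enlarge $E$ by choosing it to dominate $D$ with a large multiplier. Alternatively, I would exploit that $B'^{<0}$ is exceptional and use convexity: for $0<\epsilon<1$,
\[
B'^{\geq0}+\bbeta_{X'}+\epsilon E=\epsilon\,(B'+\bbeta_{X'}+E)+(1-\epsilon)(B'+\bbeta_{X'})+B'^{<0}.
\]
The first term is big, so the whole class is big provided $(1-\epsilon)(B'+\bbeta_{X'})+B'^{<0}$ is pseudo-effective. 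The pseudo-effectivity of $B'+\bbeta_{X'}$ itself is not clear, so instead I would redefine things: replace $E$ by $E$ plus a suitable multiple of exceptional divisors so that $D\le 0$ fails only where $B'^{<0}$ compensates.

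The cleaner route, which I would try first, is to note that bigness does not depend on the exceptional part in the sense that matters here. The class $\nu^*\nu_*(\cdot)$ is not defined in general, but on a resolution of $X$ with $X$ arbitrary one can still use the volume comparison ${\rm vol}(P+F)$ for $F$ effective exceptional. The main obstacle is exactly this: showing that adding an arbitrarily small positive multiple of an exceptional $E$ to $B'^{\ge0}+\bbeta_{X'}$ gives a big class, without knowing that $B'^{\geq0}+\bbeta_{X'}$ is pseudo-effective.

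My plan is to make $E$ big relative to $X$. Choose $E$ to be $\nu$-exceptional with $-E$ $\nu$-ample, which is possible since $\nu$ can be taken projective as a sequence of blow-ups. Then $h^*\gamma-\delta E'$ stays big for small $\delta$, which absorbs the negative exceptional part and lets $\epsilon$ be rescaled. In summary, I would build $E$ as a large multiple of the exceptional divisor of a projective resolution, chosen so that both inequalities hold simultaneously, and check them coefficientwise.
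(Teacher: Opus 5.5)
\textbf{There is a genuine gap in Step 2.} Your Step 1 is essentially the paper's argument. The paper works on a resolution carrying the big class $\Theta$ on which $\bbeta$ descends. It then uses Demailly's second support theorem, rather than a negativity-type lemma, to see that $\Theta-(B'+\bbeta_{X'})$ is a $\nu$-exceptional divisor $F$, and sets $E=F^{\geq 0}$.

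In Step 2 you never establish bigness of $B'^{\geq 0}+\bbeta_{X'}+\epsilon E$ for small $\epsilon$, and the workaround you sketch cannot work. Enlarging $E$ by a multiplier changes nothing, because the condition must hold for every $\epsilon>0$, so it is invariant under positive rescaling of $E$. Openness of the big cone only lets you subtract a fixed small amount $\delta E'$ from $h^*\gamma$. But
\[B'^{\geq 0}+\bbeta_{X'}+\epsilon E= h^*\gamma-(D-B'^{<0})+\epsilon E,\]
so you would have to absorb the positive part of $D-B'^{<0}$. That divisor does not shrink as $\epsilon\to 0$, and it is nonzero whenever $B'^{\geq 0}+\bbeta_{X'}$ is not already big. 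Bigness of such a transcendental class also cannot be decided by checking coefficients.

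\textbf{The missing idea.} $B'^{\geq 0}+\bbeta_{X'}$ is pseudo-effective: $B'^{\geq 0}$ is effective, and $\bbeta_{X'}$ is nef because $\bbeta$ descends to $X'$. You do not need pseudo-effectivity of $B'+\bbeta_{X'}$, which is what you worried about. With this observation your own convexity identity already closes the argument. For $0<\epsilon<1$,
\[(1-\epsilon)(B'+\bbeta_{X'})+B'^{<0}=(1-\epsilon)\bigl(B'^{\geq 0}+\bbeta_{X'}\bigr)+\epsilon B'^{<0}\]
is pseudo-effective, and a big class plus a pseudo-effective class is big. This is exactly the paper's argument in a different arrangement. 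The paper notes that $B'^{\geq 0}+\bbeta_{X'}+E=(B'+\bbeta_{X'}+E)+B'^{<0}$ is big and then writes
\[B'^{\geq 0}+\bbeta_{X'}+\epsilon E=(1-\epsilon)\bigl(B'^{\geq 0}+\bbeta_{X'}\bigr)+\epsilon\bigl(B'^{\geq 0}+\bbeta_{X'}+E\bigr).\]
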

\begin{proof} We may pick a resolution $\nu: X'\to X$ and a big class $\Theta$ such that $\nu _* \Theta =B+\bbeta _X$. Possibly replacing $X'$ by a higher model, we may also assume that $\bbeta$ descends to $X'$ and in particular $\bbeta _{X'}$ is nef. We write $K_{X'}+B'+\bbeta _{X'}=\nu ^*(K_X+B+\bbeta_X)$. Then $\nu _*(\Theta -B'-\bbeta _{X'})=0$ and so, by Demailly's second support theorem (\cite[Proposition (2.13)]{agbook}), we see that $ \Theta-(B'+\bbeta_{X'})$ is a $\nu$-exceptional divisor $F$. Decompose $F = F^{\ge 0}- F^{\le 0}$, where $F^{\ge 0}\ge 0$, $F^{\le 0}\ge 0$ and $F^{\ge 0}$ and $F^{\le 0}$ have no common components. Define $E:= F^{\ge 0}$. Then we claim that $B'+\bbeta_{X'}+E$ is big. Indeed, by definition
$$B^{\prime}+\bbeta_{X^{\prime}}+E \equiv \Theta+F^{\le 0},$$
and the right-hand side is big. Since ${B^{\prime}}^{\geq 0}+\bbeta_{X^{\prime}}$ is pseudo-effective, it follows easily that \[B'^{\geq 0}+\bbeta_{X'} +\epsilon E=(1-\epsilon)\big(B'^{\ge0}+\bbeta_{X'}\big)+\epsilon\big(B'^{\ge0}+\bbeta_{X'}+E\big)\] is big for any $\epsilon >0$. 
% By assumption, there exists a birational morphism $\mu: Y\to X$ and a big class $\Theta$ such that $\mu_* \Theta = B+\bbeta_X$. On the other hand, by definition of generalized pair, there exists a birational morphism $\nu:X'\to X$ such that $\bbeta$ descends to $X'$. We then take a common resolution, as shown in the diagram below 
% \begin{center}
% \begin{tikzcd}
% 	W & Y \\
% 	{X'} & X.
% 	\arrow["q", from=1-1, to=1-2]
% 	\arrow["p"', from=1-1, to=2-1]
% 	\arrow["\mu", from=1-2, to=2-2]
% 	\arrow["\nu"', from=2-1, to=2-2]
% \end{tikzcd}
% \end{center}
% Then $$\nu_*(p_*q^*\Theta)  = B+\bbeta_X= \nu_*(B'+\bbeta_{X'}).$$ Since $\nu_*(p_*q^* \Theta - (B' + \bbeta_{X'})) = 0$, by Demailly's second support theorem (\cite[Proposition (2.13)]{agbook}), we see that $p_*q^* \Theta-(B'+\bbeta_{X'})$ is a $\nu$-exceptional divisor $F$. Decompose $F = F^{\ge 0}- F^{\le 0}$, where $F^{\ge 0}\ge 0$, $F^{\le 0}\ge 0$ and $F^{\ge 0}$ and $F^{\le 0}$ have no common components. Define $E:= F^{\ge 0}$. Then we claim that $B'+\bbeta_{X'}+E$ is big. Indeed, by definition
% $$B^{\prime}+\bbeta_{X^{\prime}}+F^{\ge 0} \equiv p_* q^* \Theta+F^{\le 0},$$
% and the right-hand side is big. Hence so is the left-hand side.
% Since  $B'^{\geq 0}+\bbeta_{X'}$ is pseudo-effective, then $B'^{\ge0}+\beta_{X'}+\epsilon E=(1-\epsilon)\big(B'^{\ge0}+\beta_{X'}\big)+\epsilon\big(B'^{\ge0}+\beta_{X'}+E\big)=$ is big for any $\epsilon >0$. 
\end{proof}

We will also require some positivity notions in the relative setting. 

\begin{definition}
Let $f: X \rightarrow S$ be a proper surjective morphism from a normal complex analytic variety $X$ onto a relatively compact base $S$, and let $\alpha \in H_{\mathrm{BC}}^{1,1}(X)$ be a $(1,1)$-class. We say that:
\begin{enumerate}
\item The class $\alpha \in H^{1,1}_{\rm{BC}}(X)$ is \textit{$f$-nef} or \textit{relatively nef over $S$} if for any $s\in S$, the restriction $\alpha_s : = \alpha|_{X_s}$ is a nef class on the fiber $X_s$. Equivalently, $\alpha \in H^{1,1}_{\rm{BC}}(X)$ is {$f$-nef} if there is a cover $S=\cup S_i$ by open subsets such that each $\alpha |_{f^{-1}(S_i)}$ is nef.
\item The class $\alpha \in H^{1,1}_{\rm{BC}}(X)$ is \textit{$f$-pseudo-effective over $S$} or \textit{relatively pseudoeffective over $S$} if there is a complement $V$ of a countable union of Zariski closed proper subsets of $S$ such that $\alpha_s:=\left.\alpha\right|_{X_s}$ is pseudo-effective for any $s \in V$.
\item The class $\alpha \in H_{\mathrm{BC}}^{1,1}(X)$ is \textit{$f$-big over $S$} or \textit{relatively big over $S$} if there is a dense Zariski open subset $U$ of $S$ such that $\alpha_s$ is big for any $s \in U$.

\item The class $\alpha \in H_{\mathrm{BC}}^{1,1}(X)$ is \textit{$f$-semi-ample over $S$} or \textit{relatively semi-ample over $S$} if there exists a contraction $\varphi: X\to Y/S$ and a relative K\"ahler class $\omega_Y$ on $Y$ over $S$ such that $\alpha = \varphi^* \omega_Y$. 
\end{enumerate}
\end{definition}

\begin{lemma}[{Negativity lemma for movable divisors, \cite{Fuj22}}]\label{l-negativity}
Let $\pi:X'\to X$ be a projective bimeromorphic morphism and let $E$ be an $\mathbb{R}$-Cartier divisor on $X'$ with $E\in\overline{\operatorname{Mov}}(X'/X;W)$, where $W\subset X$ is a compact subset and $\overline{\operatorname{Mov}}(X'/X;W)$ denotes the closure of the cone of relative movable divisors over $W$. If $-\pi_*E|_U$ is effective for some open neighborhood $U$ of $W$, then $-E|_{\pi^{-1}(U)}$ is effective.
\end{lemma}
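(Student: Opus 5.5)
The statement is local over $W$, so I will shrink $X$ to a relatively compact Stein neighbourhood $U$ of $W$ on which $-\pi_*E$ is effective, and argue by contradiction. Write $E = E^{+} - E^{-}$ with $E^{\pm}\ge 0$ having no common components, and suppose $E^{+}\neq 0$ over every neighbourhood of $W$. The condition $-\pi_*E|_U\ge 0$ says that no component of $E^+$ is a non-exceptional divisor. Hence every component of $E^+$ is $\pi$-exceptional, and $\pi(\operatorname{Supp}E^+)$ has codimension $\ge 2$ in $X$ and meets $W$ (after possibly shrinking $U$).

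\textbf{Movable divisors.} First I treat the case $E\in\operatorname{Mov}(X'/X;W)$, i.e.\ a positive combination of divisors $D$ whose relative base locus over a neighbourhood of $W$ contains no divisor. I reduce to the standard negativity argument by cutting down. Choose a component $P$ of $E^+$ and a general point $x\in\pi(P)$ near $W$. Since $U$ is Stein, I can take general hyperplane sections through $x$ in a local embedding and reduce to the case where $\pi(P)$ is a point and $X$ is a surface germ; on the corresponding surface $S'$ the restricted divisor is still movable over $S$. On the surface, the exceptional curves have a negative definite intersection matrix. The restricted exceptional part $E^+|_{S'}$ is then nonzero and supported on exceptional curves, so some exceptional curve $C\subset\operatorname{Supp}E^+|_{S'}$ satisfies $E^+\cdot C<0$. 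Since $E^-$ has no common component with $E^+$, we get $E^-\cdot C\ge 0$, and therefore $E\cdot C<0$. This contradicts movability: a movable divisor has nonnegative degree on any curve not contained in its base locus, and $C$, which moves in a family covering $P$, is not contained in the base locus because that locus contains no divisor.

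\textbf{Passing to the closure.} For general $E\in\overline{\operatorname{Mov}}(X'/X;W)$, write $E=\lim E_k$ with $E_k$ movable over $W$. Here I use that $\pi$ is projective, so $N^1(X'/X;W)$ is finite dimensional. Then $-\pi_*E_k$ need not be effective, so I will not apply the previous step to $E_k$ directly. Instead I use the intersection inequality $E_k\cdot C\ge 0$ for curves $C$ sweeping out exceptional divisors; this is a closed condition, so it passes to the limit and gives $E\cdot C\ge 0$ for such families. The surface argument above only used this inequality for curves moving in a family covering $P$. It therefore runs verbatim for $E$, and I conclude $E^+=0$ over a neighbourhood of $W$.

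\textbf{Main obstacle.} The delicate point is the cutting-down step in the analytic relative setting. I need to choose $P$ and the general point so that the curves $C$ cover $P$, and to check that restriction to general sections preserves both the sign conditions and the numerical movability over a germ of $W$. I would handle this by working on a Stein neighbourhood, where Bertini-type general sections exist, and by using relative projectivity to compare with a relatively ample divisor. Alternatively, I would cite the Hodge index/negativity lemma for projective bimeromorphic morphisms over Stein bases from \cite{Fuj22}.
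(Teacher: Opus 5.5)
The paper does not prove this lemma; it is quoted from \cite{Fuj22}. Your overall strategy is the standard proof of this form of the negativity lemma: $E^+$ is $\pi$-exceptional, some component of $E^+$ is swept out by contracted curves $C\not\subset\operatorname{Supp}E^-$ with $E^+\cdot C<0$, and yet $E\cdot C\ge 0$ for general members of such families. The gap is in the reduction to a surface as you set it up. Cutting $X$ by general hyperplanes \emph{through} $x$ and taking the strict transform can miss $P$ altogether. For example, blow up a point $x\in\mathbb{C}^3$ and then a point $q$ of the exceptional $\mathbb{P}^2$. The strict transform of a general surface through $x$ meets the first exceptional divisor in a general line, which avoids $q$, so it never meets the second exceptional divisor $P_2$. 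If $E^+=P_2$, then $E^+|_{S'}=0$ and no contradiction arises.

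What is needed is the usual procedure. Take $P$ with $k=\dim\pi(P)$ maximal among the components of $E^+$ whose image meets $W$. Cut by $k$ general hyperplane sections of $X$ near a point of $\pi(P)\cap W$, so that $\pi(P)$ becomes finite and components over smaller centres disappear. Then cut by general members of a $\pi$-very ample system on $X'$. Maximality is exactly what makes every component of $E^+|_{S'}$ contracted; otherwise components of $E^+$ over larger centres survive as non-exceptional curves and negative definiteness gives nothing. Generality makes each such component a general member of a covering family of a component of $E^+$, hence not contained in $\operatorname{Supp}E^-$ or in the base loci.

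The closure step also needs an argument you have not given. Convergence $E_k\to E$ in $N^1(X'/X;W)$ only controls degrees on curves lying over points of $W$. The curves produced above lie over general points of $\pi(P)$, which need not be in $W$, and each $E_k$ is movable only over its own neighbourhood of $W$. You must use that the degrees of the fixed divisors $E$ and $E_k$ are constant along the family $C_y$ and equal their degrees on the limit $1$-cycle $C_w$ over a point $w\in\pi(P)\cap W$. This gives $E\cdot C_y=E\cdot C_w=\lim_k E_k\cdot C_w\geq 0$.

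Finally, your argument yields $E^+=0$ only over some neighbourhood of $W$, not over the given $U$, and that is all that is true. If $\pi$ blows up a point $p\in U\setminus W$ with exceptional divisor $E$, then $N^1(X'/X;W)=0$, so $E\in\overline{\operatorname{Mov}}(X'/X;W)$ and $\pi_*E=0$, yet $-E$ is not effective on $\pi^{-1}(U)$. Similarly, $W$ need not have a Stein neighbourhood (in Proposition~\ref{p-negZariski} one has $W=Y$ compact). This is harmless, since the argument is local near a point of $\pi(P)\cap W$.
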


\begin{proposition}\label{p-negZariski}
Let $f:X\to Y$ be a projective bimeromorphic morphism between compact complex varieties. Assume that $G\ge 0$ is an $f$-exceptional $\mathbb{R}$-Cartier divisor. Then the negative part of the divisorial Zariski decomposition satisfies $$N_{\sigma}(G/Y) = G \ge 0.$$
\end{proposition}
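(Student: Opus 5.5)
We must show $N_\sigma(G/Y)=G$ for an effective $f$-exceptional $\mathbb R$-Cartier divisor $G$. The plan is a two-sided comparison. The inequality $N_\sigma(G/Y)\le G$ is formal: $G$ is effective, hence $f$-pseudo-effective, and $N_\sigma$ of an effective divisor is bounded by that divisor. Concretely, $\sigma_P(G/Y)$ is computed as a limit of infima of $\operatorname{mult}_P$ over effective divisors $\equiv_Y G+\epsilon A$ (with $A$ relatively ample). The divisor $G+\epsilon A'$, with $A'\ge 0$ not containing $P$ and $A'\equiv_Y A$, is one such competitor, which gives $\sigma_P(G/Y)\le \operatorname{mult}_P G$. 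Moreover $N_\sigma(G/Y)$ is supported on $f$-exceptional divisors, or at least $\sigma_P=0$ for non-exceptional $P$, since those are not in the support of $G$.

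For the reverse inequality, set $P:=G-N_\sigma(G/Y)$. This is the positive part of the divisorial Zariski decomposition. By the above it is effective, $f$-exceptional, and lies in $\overline{\operatorname{Mov}}(X/Y;W)$ for every compact $W\subset Y$; this is the standard property that $P_\sigma$ is relatively movable, as in \cite{Fuj22}. Since $P$ is $f$-exceptional, $f_*P=0$, so $-f_*P=0$ is effective over a neighborhood $U$ of $W=Y$ (here $Y$ is compact, so we take $U=Y$). Lemma \ref{l-negativity} then gives $-P\ge 0$. Combined with $P\ge 0$ this yields $P=0$, i.e. $N_\sigma(G/Y)=G$.

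The main obstacle is the middle step: justifying in the analytic, relative setting of \cite{Fuj22} that $G-N_\sigma(G/Y)$ lies in the closed relative movable cone $\overline{\operatorname{Mov}}(X/Y;W)$ with exactly the conventions required by Lemma \ref{l-negativity}. I would first try to quote the corresponding statement in \cite{Fuj22} for projective morphisms over a compact subset $W$. Failing that, I would approximate. Write $G+\epsilon A\equiv_Y D_\epsilon\ge 0$ with $D_\epsilon$ nearly realizing $\sigma$; then $D_\epsilon-N_\sigma(D_\epsilon)$ has no fixed divisorial components, hence is movable over $W$. Finally let $\epsilon\to 0$, using the continuity of $N_\sigma$ on the big cone and the fact that $\sigma_P(G+\epsilon A)\to\sigma_P(G)$, which holds for exceptional $P$ because the $\sigma_P$ are lower semicontinuous and bounded above as $\epsilon\to 0$. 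A minor point is also to check $\mathbb R$-Cartierness of $P$, which is automatic since $N_\sigma(G/Y)$ is a divisor on $X$ and $X$ has the needed factoriality near $f$-exceptional loci, or can be reduced to a resolution by pulling back.
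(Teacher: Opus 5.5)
Your proposal is correct and matches the paper's proof. Since $G\ge 0$, you get $N_\sigma(G/Y)\le G$, so $P=G-N_\sigma(G/Y)$ is effective and $f$-exceptional; it is relatively movable, and Lemma \ref{l-negativity} (applied with $f_*P=0$) forces $P=0$. The input you flagged as the main obstacle, movability of the positive part, is exactly what the paper takes from \cite[Lemma 4.10]{FKL16}, so your approximation fallback is not needed.
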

\begin{proof}
By definition, the positive part $P_\sigma(G/Y): = G - N_{\sigma}(G/Y)$ of the
divisorial Zariski decomposition is represented by a
$\mathbb{R}$-divisor. Since $G \ge 0$, $N_{\sigma}(G/Y) \le G$. Therefore $P = G - N_{\sigma}(G/Y) \ge 0$
is effective, and it is movable over $Y$ by
\cite[Lemma 4.10]{FKL16}. Since $G$ is $f$-exceptional and $\operatorname{Supp}(P) \subset
\operatorname{Supp}(G)$, the divisor $P$ is also $f$-exceptional.
Therefore, by the negativity lemma \ref{l-negativity}, $P = 0$, and hence
$N_{\sigma}(G/Y) = G$.
\end{proof}

\subsection{Generalized pairs and singularities}
Throughout this paper we work with \emph{generalized pairs} 
$(X,B+\bbeta )$, where $\bbeta$ is a b-nef b-$(1,1)$-current. By adding this extra flexibility we are able to treat transcendental problems within the framework of the minimal model
program.
\begin{definition}[{Generalized pairs, \cite[Definition 2.7]{DHY23}}]
Let $f: X \rightarrow S$ be a proper morphism of complex analytic varieties, where $S$ is relatively compact, $\nu: X^{\prime} \rightarrow X$ a resolution, $B^{\prime}$ an $\mathbb{R}$-divisor on $X^{\prime}$ with simple normal crossing support such that $B:=\nu_* B^{\prime} \geq 0$, and $\boldsymbol{\beta}$ a closed $\mathrm{b}$-$(1,1)$ current. We say that $(X/S, B+\boldsymbol{\beta})$ is a \textit{generalized pair} if

(1) $\boldsymbol{\beta}$ is a real closed smooth b-$(1,1)$ current that descends to $X^{\prime}$,

(2) $\left[\boldsymbol{\beta}_{X^{\prime}}\right] \in H_{\mathrm{BC}}^{1,1}\left(X^{\prime}\right)$ is nef over $S$, and

(3) $\left[K_{X^{\prime}}+B^{\prime}+\boldsymbol{\beta}_{X^{\prime}}\right]=\nu^* \gamma$ for some $\gamma \in H_{\mathrm{BC}}^{1,1}(X)$.
\end{definition}
Note that the trace of the b-$(1,1)$ current $\bbeta$ on $X$ may not have local potential. But if $\bbeta_{X}$ has local potential then it has plurisubharmonic local potentials (cf. \cite[Remark 2.6.(ii)]{DHY23}). Suppose that $\beta$ is a closed positive $(1,1)$-current on $X$ with local (psh) potentials, then we may define a b-$(1,1)$ current $\bar{\beta}$, such that for any bimeromorphic morphism $\nu: X^{\prime} \rightarrow X$ we let $\bar{\beta}_{X^{\prime}}:=\nu^* \beta$, which is well defined by \cite[Claim 2.5]{DHY23}. 

Let $(X{ /S}, B+\bbeta)$ be a generalized pair as above, and let $E$ be a prime divisor on some birational model of $X$. We define the \textit{generalized discrepancy} of $E$ with respect to the above generalized pair as follows. Let $f: X^{\prime} \rightarrow X$ be a log resolution, such that $B^{\prime}$ is a $\mathbb{R}$-divisor with simple normal crossing support, $\boldsymbol{\beta}$ is the positive b-$(1,1)$ current that descends to $X^{\prime}$ so that $\boldsymbol{\beta}_{X^{\prime}}$ is nef over $S$ and $\boldsymbol{\beta}=\overline{\boldsymbol{\beta}_{X^{\prime}}}$, and let $\left[K_{X^{\prime}}+B^{\prime}+\boldsymbol{\beta}_{X^{\prime}}\right]=f^* \gamma$ for some $\gamma \in H_{\mathrm{BC}}^{1,1}(X, \mathbb{R})$. We can then write
$$
K_{X^{\prime}}+B^{\prime}+\boldsymbol{\beta}_{X^{\prime}}=f^*\left(K_X+B+\boldsymbol{\beta}_X\right) .
$$
The generalized discrepancy $a(E,X{/S}, B+ \boldsymbol{\beta})$ of $E$ is defined to be $-\operatorname{coeff}_E\left(B^{\prime}\right)$.

A generalized pair $(X{/S}, B+\bbeta)$ is said to have \textit{generalized klt singularities} (resp. \textit{generalized lc singularities}) if the generalized discrepancy of any prime divisor $E$ over $X$ is $a(E, X{/S}, B+ \boldsymbol{\beta})>-1$ (resp. $a(E, X{/S}, B+ \boldsymbol{\beta}) \geq-1$ ).

A generalized pair $(X, B+\boldsymbol{\beta})$ is said to have \textit{generalized dlt singularities}  if there is an open subset $U \subset X$ such that $\left(U,\left.(B+\bbeta)\right|_U\right)$ is a log resolution (of itself) and $-1 \leq a(P, X{/S}, B+\bbeta) \leq 0$ for any prime divisor $P$ on $U$ and $-1<a(P, X{/S}, B+\bbeta) $ for any prime divisor $P$ over $X$ with center contained in $X \backslash U$.

\begin{remark}
It is straightforward to check that a generalized pair $(X, B + \bbeta)$ (where we assume that $S$ is a point) is generalized klt if and only if there exists an open cover $X = \bigcup_{i \in I} X_i$ such that $(X_i, B|_{X_i} + \bbeta|_{X_i})$ is generalized klt for every $i \in I$. For generalized dlt singularities, however, the condition is not analytically local, cf. \cite[Example~3.11]{Fuj22}.

\end{remark}
We can also define the log canonical threshold for generalized pairs. Let $(X{/S},B+ \boldsymbol{\beta})$ be a generalized pair, $\eta$ a locally $\partial\bar \partial$-exact (1,1)-form on $X$ such that for some resolution
$\nu:X'\to X$ we have $$\nu^* \eta = \eta ' + F$$ where
$[\eta']\in H^{1,1}_{\rm BC}(X')$ is nef over $S$ and $F\ge 0$ is an effective $\mathbb{R}$-divisor.   We write $\nu^*(K_X+B+ \bbeta_X) = K_{X'}+B'+\bbeta_{X'}$. By further blowing up $X'$, we may assume that $F+B'$ is SNC and $\bbeta$ descends to $X'$. Then we define the \textit{log canonical threshold}  $$\text{LCT}(X/S, B + \boldsymbol{ \beta};\overline{\eta}+\nu_*F;Z) = \sup\{t >0 \mid (X' { / S}, B' + t F) \text{ is log canonical near  }Z\}.$$

\begin{lemma}
Let $f: X'\to X$ be a proper bimeromorphic morphism over a relatively compact Stein open subset $X$. Assume that $(X,B+\bbeta)$ is generalized klt and $K_{X'}+B_{X'}+\bbeta_{X'} = f^*(K_X+B+\bbeta_X)$. Then there exists an effective $\mathbb{R}$-divisor $\Delta'\geq 0$ such that $\Delta'\equiv_{X} \bbeta_{X'}$ and $(X',B_{X'}+\Delta')$ is sub-klt, and $(X,B+f_*\Delta')$ is klt.
\end{lemma}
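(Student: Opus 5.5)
We first pass to a model on which $\bbeta$ becomes a divisor, and then push back down to $X$. Since $X$ is relatively compact and Stein, the proof is local over $X$. After replacing $X'$ by a higher resolution $g:X''\to X'$, we may assume that $\bbeta$ descends to $X'$. This reduction works because the statement for $X''$ implies it for $X'$: set $\Delta'=g_*\Delta''$. Then $\Delta'\equiv_X g_*\bbeta_{X''}=\bbeta_{X'}$, and sub-klt descends, since $K_{X''}+B_{X''}+\Delta''=g^*(K_{X'}+B_{X'}+\Delta')$ once the numerical equivalence is upgraded to linear equivalence as below. We may also assume that $f$ is projective and that $X'$ is smooth with $\operatorname{Supp}B_{X'}\cup\operatorname{Ex}(f)$ snc.

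\textbf{Step 1: replace the nef class by an $\R$-line bundle.} Because $X$ is Stein, relatively compact and has rational singularities, Lemma \ref{l-rel} applies. (If $X$ is only generalized klt, we first note that the relevant vanishing $R^if_*\OO_{X'}=0$ still holds, by local Kawamata--Viehweg vanishing on the resolution.) The lemma gives an $\R$-line bundle $L$ with $[L]=[\bbeta_{X'}]$, and this $L$ is $f$-nef. Since $f$ is projective, we can choose an effective $f$-exceptional divisor $-E$ such that $E$ is $f$-ample; for instance $-E$ can be built from the exceptional divisor of a blow-up. Then $L+\epsilon E$ is $f$-ample for every $\epsilon>0$.

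\textbf{Step 2: choose a general member.} Over a relatively compact Stein base, an $f$-ample $\R$-line bundle is semiample after shrinking $X$ slightly. Pick a general effective $\R$-divisor $H\sim_{\R,X}L+\epsilon E$ with small coefficients that is in general position with respect to $B_{X'}$, and set $\Delta'=H-\epsilon E$. This divisor is not effective. To fix this, use instead $\Delta'=H$ and absorb the term $-\epsilon E$ into the boundary. The required condition $(X',B_{X'}+\Delta')$ sub-klt then holds in the form $(X',B_{X'}-\epsilon E+H)$ sub-klt. Since $(X,B+\bbeta)$ is generalized klt, the coefficients of $B_{X'}$ are $<1$, so for small $\epsilon$ the divisor $B_{X'}-\epsilon E+H$ has snc support with coefficients $<1$. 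Therefore $\Delta'_0:=H-\epsilon E\equiv_X\bbeta_{X'}$ and $(X',B_{X'}+\Delta'_0)$ is sub-klt.

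\textbf{Step 3: restore effectivity.} To make $\Delta'$ genuinely effective, note that $\bbeta_{X'}$ is $f$-nef. By the negativity lemma (Lemma \ref{l-negativity}) applied over $X$, the divisor $f^*f_*H-H$ is effective and exceptional. We therefore take $\Delta':=H+\epsilon E$ after replacing $E$ by a divisor with the opposite sign convention, chosen so that $\bbeta_{X'}-\epsilon E'$ is $f$-ample with $E'\ge0$. This is the main obstacle. The fix I would try first is to use $\bbeta_{X'}$ together with bigness over $X$ (every class is big over $X$ since $f$ is bimeromorphic). Write $\bbeta_{X'}\equiv_X A+E'$ with $A$ $f$-ample and $E'\ge0$ exceptional, where $E'=-\epsilon E$ for the $f$-ample $E$ with $-E\ge0$. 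Then set $\Delta'=A_{\rm gen}+E'$, where $A_{\rm gen}$ is a general member of $A$. This is effective, satisfies $\Delta'\equiv_X\bbeta_{X'}$, and $B_{X'}+\Delta'$ has coefficients $<1$ with snc support for $\epsilon$ small. Hence $(X',B_{X'}+\Delta')$ is sub-klt.

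\textbf{Step 4: descend to $X$.} Because $K_{X'}+B_{X'}+\bbeta_{X'}=f^*(\cdots)$, we have $K_{X'}+B_{X'}+\Delta'\equiv_X0$. Lemma \ref{l-rel} and rationality upgrade this to $\R$-linear triviality over the Stein base $X$, so $K_{X'}+B_{X'}+\Delta'=f^*(K_X+B+f_*\Delta')$. Then $(X,B+f_*\Delta')$ is klt by pullback, and effectivity of $B+f_*\Delta'$ is clear.
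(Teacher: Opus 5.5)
Your argument is correct in the end. The decomposition you settle on in the last paragraph of Step 3 is $\bbeta_{X'}\equiv_X A+E'$, with $A=L+\epsilon E$ $f$-ample and $E'=-\epsilon E\geq 0$ exceptional, followed by $\Delta'=A_{\rm gen}+E'$. This is Kodaira's lemma for an $f$-nef, $f$-big class, which is also the heart of the paper's proof.

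The difference is what you apply it to. The paper notes that $\bbeta_{X'}\equiv_X-(K_{X'}+B_{X'})$, which is an honest $\R$-divisor on the resolution, and applies Kodaira's lemma directly to it, obtaining $-(K_{X'}+B_{X'})\sim_{\R,X}H_n+\frac1n E$. Then $K_{X'}+B_{X'}+\Delta'\sim_{\R,X}0$ holds by construction, so it equals $f^*(K_X+B+f_*\Delta')$ and klt descends at once.

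You instead realize $[\bbeta_{X'}]$ as an $\R$-line bundle via Lemma \ref{l-rel}, and then have to recover $\R$-linear triviality in Step 4. Lemma \ref{l-rel} is the wrong citation there: it is a surjectivity statement. What you need is the converse direction: an $\R$-line bundle on $X'$ whose class is pulled back from $X$ is, locally over $X$, $\R$-linearly a pullback. This follows from $R^1f_*\OO_{X'}=0$, which makes $R^1f_*\OO^*_{X'}\to R^2f_*\mathbb Z$ injective, together with finite generation of the stalks of $R^2f_*\mathbb Z$. So the step is true, but the paper's choice makes both Lemma \ref{l-rel} and this upgrade unnecessary. In fact your own parenthetical in Step 1 (relative Kawamata--Viehweg vanishing) already uses that $-(K_{X'}+B_{X'})$ is $f$-nef and $f$-big, which is all the paper needs.

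Separately, Steps 2--3 rest on a sign slip. With your convention ($-E\geq 0$ and $E$ $f$-ample), $H-\epsilon E=H+\epsilon(-E)$ is already effective, so the ``main obstacle'' does not exist. The negativity-lemma remark plays no role either (and $f_*H$ need not even be $\R$-Cartier on $X$). The divisor you finally choose is just the $H-\epsilon E$ of Step 2.
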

\begin{proof}
By taking a further resolution, we may assume that
$$\bbeta_{X'} = f^*(K_X+B+\bbeta_X) - (K_{X'}+B_{X'}) \equiv_{X} -(K_{X'}+B_{X'})$$
is nef and big over $X$, and that $f: X'\to X$ is a projective morphism. By Kodaira's lemma there exist an effective $\mathbb{R}$-divisor $E$ on $X'$
and, for every $n\gg 0$, an $f$-ample $\mathbb{R}$-divisor $H_{n}\geq 0$ on
$X'$ such that
$$
  -(K_{X'}+B_{X'}) \sim_{X,\mathbb{R}} H_{n}+\tfrac{1}{n}E .
$$
Fix $n$ sufficiently large and replace $H_{n}$ by a general effective
$\mathbb{R}$-divisor. Setting
$\Delta' := H_{n}+\tfrac{1}{n}E$, the pair $(X',B_{X'}+\Delta')$ is sub-klt,
and hence $(X,B+f_{*}\Delta')$ is klt.
\end{proof}

\begin{lemma}\label{l-kltbig} Let $X$ be a compact complex analytic variety in Fujiki's class $\mathcal C$ such that $(X,B+\bbeta)$ is generalized klt, and $B+\bbeta _X$ is modified big. Then  there is a K\"ahler resolution $\nu: X'\to X$ and a generalized klt pair $(X,G+\ggamma)$ with a K\"ahler form $\omega'$ on $X'$ such that $K_{X}+B+\bbeta _{X}\equiv K_{X}+G+\ggamma _{X}$, $\ggamma$ descends to $X'$, and $\ggamma _{X'}-\omega'$ is nef. \end{lemma}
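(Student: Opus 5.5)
The plan is to use the bigness in Lemma \ref{l-modnef} to absorb a small multiple of a K\"ahler form into the nef part. The nef part $\bbeta_{X'}$ is replaced by $(1-t)\bbeta_{X'}+t\omega$, and the discrepancy this creates is put into the boundary. For $0<t\ll 1$ the klt condition survives.

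\textbf{Step 1: a K\"ahler resolution.} Since $X$ is in Fujiki's class $\mathcal C$, there is a resolution of $X$ that is a compact K\"ahler manifold. Blowing up along smooth centres keeps us in the K\"ahler category. So I will choose all resolutions below to be compact K\"ahler manifolds dominating a fixed K\"ahler resolution. I apply Lemma \ref{l-modnef} and take its model $\nu:X'\to X$ to be such a K\"ahler resolution on which $\bbeta$ descends, so that $\bbeta_{X'}$ is nef. Write $K_{X'}+B'+\bbeta_{X'}=\nu^*(K_X+B+\bbeta_X)$. Lemma \ref{l-modnef} then gives an effective $\nu$-exceptional divisor $E$ such that $\Theta:=B'^{\ge 0}+\bbeta_{X'}+E$ is big.

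\textbf{Step 2: a K\"ahler class plus an effective divisor.} The class $\Theta$ contains a K\"ahler current. By Demailly regularization I may take this current with analytic singularities. Resolving its singular ideal, and passing to a higher K\"ahler blow-up (still called $X'$, with $B'$, $E$ and $\bbeta_{X'}$ replaced by pullbacks), I aim to write
\[
\Theta\equiv \omega+D,
\]
where $\omega$ is a K\"ahler form on $X'$ and $D\ge 0$ is an $\mathbb R$-divisor. I also want $\operatorname{Supp}(B')\cup\operatorname{Supp}(D)\cup\operatorname{Supp}(E)\cup{\rm Ex}(\nu)$ to be snc.

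To get a genuine K\"ahler form, the pullback of the current is first written as a semipositive big smooth form plus a divisor. One then subtracts a small multiple of an exceptional divisor that is anti-ample relative to the blow-up, and adds it back into $D$.

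This step is the main technical point. The difficulty is making sure the resulting form is honestly K\"ahler, and not merely semipositive and big. I would first try to follow Boucksom's argument that a big class on a compact K\"ahler manifold is, on a suitable modification, a K\"ahler class plus an effective divisor.

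\textbf{Step 3: redistributing $\bbeta_{X'}$.} Step 2 gives
\[
\bbeta_{X'}\equiv \omega+D-B'^{\ge 0}-E.
\]
Fix $0<t\ll 1$ and set
\[
\ggamma_{X'}:=(1-t)\bbeta_{X'}+t\omega,\qquad G':=B'-tB'^{\ge 0}+tD-tE.
\]
Then $K_{X'}+G'+\ggamma_{X'}\equiv K_{X'}+B'+\bbeta_{X'}=\nu^*(K_X+B+\bbeta_X)$.

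Let $\ggamma:=\overline{\ggamma_{X'}}$ and $G:=\nu_*G'$. Since $E$ is exceptional, $G=(1-t)B+t\nu_*D\ge 0$. Put $\omega':=\tfrac t2\omega$. Then
\[
\ggamma_{X'}-\omega'=(1-t)\bbeta_{X'}+\tfrac t2\omega
\]
is nef.

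The class $K_{X'}+G'+\ggamma_{X'}$ is a pullback from $X$. Hence $(X,G+\ggamma)$ is a generalized pair, and $K_X+G+\ggamma_X\equiv K_X+B+\bbeta_X$.

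\textbf{Step 4: the klt condition.} The coefficients of $G'$ are bounded above by those of $B'+tD$. Since $(X,B+\bbeta)$ is generalized klt, every coefficient of $B'$ is $<1$. The support of $B'+D$ is snc, and $D$ is fixed, so for $t$ small every coefficient of $G'$ is still $<1$.

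Because $G'$ has snc support, this shows $(X',G')$ is sub-klt. Since the relevant class $K_{X'}+G'+\ggamma_{X'}$ is the pullback of $K_X+G+\ggamma_X$, it follows that $(X,G+\ggamma)$ is generalized klt.
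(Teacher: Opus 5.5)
Your proof is correct and follows essentially the same route as the paper. The paper also writes the big class from Lemma \ref{l-modnef} as a K\"ahler class plus an effective divisor on a higher model, quoting \cite[Lemma 3.7]{DH25} and subtracting a small $\mu$-exceptional divisor as in your Step 2, and then sets $\ggamma=(1-\delta)\bbeta+\delta\overline{\omega''}$ and $G''=(1-\delta)B''+\delta E''$ for $0<\delta\ll 1$. One point to tidy: on the higher model the boundary in $K+B'+\bbeta$ must be the crepant transform $\mu^*B'-K_{X''/X'}$, not the literal pullback, while the divisor you subtract in Step 3 stays $\mu^*(B'^{\ge 0}+E)\ge 0$; with this reading the pushforward of your $G'$ is $(1-t)B+t(\nu\mu)_*D\ge 0$ and your coefficient estimates go through unchanged.
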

\begin{proof} The proof is similar to  \cite[Lemma 3.1]{DH24}. Let $\nu :X'\to X$ be a K\"ahler log resolution of $(X,B+\bbeta _X)$. Since $B+\bbeta _X$ is modified big, then $B'+\bbeta _{X'}+E$ is big for some effective $\nu$-exceptional $\R$-divisor $E$ (see Lemma \ref{l-modnef}). 
By Demailly’s regularization theorem there is a
K\"ahler current $T$ with weakly analytic singularities such that $B'+\bbeta _{X'}+E \equiv  T$. By \cite[Lemma 3.7]{DH25} we may assume that $T\geq \omega '$
where $\omega '$ is a K\"ahler form and there is a resolution $\mu :X''\to X'$ such that $\mu ^*T=\mu ^* \omega '+\Theta +F$ where $F$ is an effective $\R$-divisor and $\Theta$ is a closed
nef (1,1)-current. Let $H$ be an effective $\mu$-exceptional $\R$-divisor such that $[\mu ^*\omega '-H]$ is a K\"ahler class and hence so is $\omega '':=\mu ^*\omega '-H+\Theta$. We write $K_{X''}+B''+\bbeta _{X''}=\mu ^*(K_{X'}+B'+\bbeta _{X'})$. Then 
\[B''+\bbeta _{X''}= \mu ^*(B'+\bbeta _{X'})-K_{X''/X'}\equiv -K_{X''/X'}-\mu ^*E+H+F+\omega ''.\] Therefore $B''+\bbeta _{X''}\equiv E''+\omega ''$ where $E''$ is an $\R$-divisor such that $\nu_*\mu_*E''=\nu_*\mu_*F\geq 0$.
We now let $\ggamma =(1-\delta)\bbeta+\delta \overline{\omega ''}$ and $G''=(1-\delta)B''+\delta E''$ for $0<\delta \ll 1$. We then have $G:=\nu _*\mu_* G''\geq 0$, $\lfloor G''\rfloor\leq 0$ and \[K_{X''}+G''+\ggamma _{X''}\equiv K_{X''}+B''+\bbeta _{X''}\equiv \mu ^*\nu ^*(K_{X}+B+\bbeta _X)\] so that $(X,G+\ggamma)$ is a generalized klt pair such that $K_X+B+\bbeta _X\equiv K_X+G+\ggamma _X$, and $\ggamma _{X''}- \delta \omega ''=(1-\delta)\bbeta _{X''}$ is nef.
\end{proof}
\begin{remark}
Note that we may replace $X'$ by any higher model $\rho:X''\to X'$ such that there is a $\rho$-exceptional divisor $F$ where $-F$ is $\rho$-ample. It suffices to define $\omega ''=\rho ^*\omega'-\epsilon F$, $\tilde \ggamma =\ggamma -\epsilon \bar F$ and $\tilde G''=G''+\epsilon F$ for $0<\epsilon \ll 1$. It then easily follows that $\omega ''$ is K\"ahler, $\tilde \ggamma _{X''}-\omega ''=\rho ^*(\ggamma _{X''}-\omega ')$ is nef,  $(X,\tilde G+\tilde \ggamma)$ is generalized klt where $\tilde G=\nu_*\rho _*\tilde G''$ and $K_X+B+\bbeta _X\equiv K_X+\tilde G+\tilde \ggamma$.
\end{remark}

\subsection{Mori cone and generalized Mori cone}

Let $f : X \to Y$ be a projective morphism of complex analytic varieties, let
$W \subset Y$ be a compact subset, and let $U$ be an open neighborhood of $W$.
Let $Z_1(X/Y;W)$ be the free abelian group generated by the projective integral
curves $C$ on $X$ with $f(C) \in W$. We define
$$
\widetilde{A}(U, W) := \operatorname{Pic}\!\left(f^{-1}(U)\right) \big/ {\equiv_W},
$$
where two line bundles $L_1, L_2 \in \operatorname{Pic}(f^{-1}(U))$ are \textit{numerically equivalent over $W$} (written $L_1 \equiv_W L_2$) if $L_1 \cdot C = L_2 \cdot C$ for every curve $C \in Z_1(X/Y;W)$. We then define
$$
A^1(X/Y;W) := \varinjlim_{W \subset U} \widetilde{A}(U, W),
$$
where $U$ ranges over all open neighborhoods of $W$, and set ${\rm{NS}}(X/Y;W)_{\mathbb{R}} = N^1(X/Y;W) := A^1(X/Y;W) \otimes_{\mathbb{Z}} \mathbb{R}$. Let $N_1(X/Y;W)$ be the dual vector space of $N^1(X/Y;W)$. To distinguish the $N_1(X / Y ; W)$ introduced here from the one introduced below (see Remark \ref{l-anvsalg}), we sometimes add a superscript `alg' and write it as $N_1^{\text{alg}}(X / Y ; W)$. We can now define the relative Mori cone inside $N_{1}^{\mathrm{alg}}(X/Y;W)$.
\begin{definition}[Relative Mori cone]\label{def-Moricone}
With the notation above, the \textit{relative Mori cone} $\overline{\mathrm{NE}}(X/Y;W)$ of $f : X \to Y$ over $W$ is the closure of the convex cone in $N_1(X/Y;W)$ spanned by the classes of projective integral curves $C$ such that $f(C)$ is a point in $W$. When $Y$ is compact, we can set $Y=W$ and write the relative Mori cone as $\overline{\text{NE}}(X/Y)$.
\end{definition}

The following version of the cone theorem will be useful for our purpose: 

\begin{theorem}\label{t-projcone} Let $f:X\to Y$ be a projective morphism of compact analytic varieties, $(X,B+\mathbf N)$ a strongly $\Q$-factorial generalized klt pair, where $\mathbf N$ is an $\R$-Cartier b-divisor.
\begin{enumerate}\item If $K_X+B+\mathbf N_X$ is not $f$-nef, then there exists a countable collection of $f$-vertical rational curves $C_i$ such that 
\[\overline{\rm NE}(X/Y)=\overline{\rm NE}(X/Y)_{K_X+B+\mathbf N_X\geq 0}+\sum _{i\in I}\R ^+[C_i]\] where $0<-(K_X+B+\mathbf N_X)\cdot C_i\leq 2\dim X$, for every extremal ray $\R ^+[C_i]$ there exists a line bundle $L_i$ such that $\R ^+[C_i]=\overline{\rm NE}(X/Y)\cap L_i^\perp$, and if $B+\mathbf N_X$ is $f$-big then $I$ is finite.
\item For every extremal ray $\R ^+[C_i]$ there is a contraction morphism $\varphi :X\to Z$ with a projective morphism $g:Z\to Y$ such that $-(K_X + B+\mathbf N_X)$ is $\varphi$-ample
and for any curve $C \subset  X$ such that $f (C)$ is a point we have that
\[\varphi (C)={\rm pt.}\qquad {\text{if and only if}}\qquad [C]\in \R^+[C_i].\]  
\item We can run the projective $(K_X+B+\mathbf N_X)$-MMP over $Y$ with scaling of an $f$-ample divisor.
\item If $K_X+B+\mathbf N_X$ is pseudo-effective over $Y$ and either $B+\mathbf N_X$ or $K_X+B+\mathbf N_X$ are $f$-big, then any MMP with scaling of an $f$-ample line bundle terminates with a good relative minimal model.
\item If $K_X+B+\mathbf N_X$ is not pseudo-effective over $Y$, then any MMP with scaling of an $f$-ample line bundle terminates with a Mori fiber space.
\end{enumerate}
%In particular, if $\bbeta \equiv _Y \mathbf N$ is a b-nef form,
\end{theorem}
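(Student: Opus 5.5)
The plan is to reduce Theorem \ref{t-projcone} to the known relative MMP for projective morphisms over relatively compact Stein bases (the analytic results of \cite{Fuj22} and \cite{DHY23}), and then globalize using the compactness of $Y$. Since $f$ is projective and $\mathbf N$ is an $\R$-Cartier b-divisor, the generalized pair $(X,B+\mathbf N)$ is an honest algebraic-type generalized pair over $Y$. Concretely, on a model $X'$ where $\mathbf N$ descends, $\mathbf N_{X'}$ is an $f$-nef $\R$-Cartier divisor. The first step is therefore to perturb $\mathbf N$. Cover $Y$ by finitely many relatively compact Stein opens $Y_j$ and fix a compact $W$ with $W=Y$. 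On each $f^{-1}(Y_j)$, adding a small multiple of an $f$-ample divisor and using Bertini-type general members, as in the lemma producing $\Delta'\equiv_X\bbeta_{X'}$, we replace $K_X+B+\mathbf N_X$, up to numerical equivalence over $Y_j$ and up to an arbitrarily small ample perturbation, by $K_X+\Delta_j$ with $(X,\Delta_j)$ klt. This lets us quote the cone and contraction theorems of \cite{Fuj22} for klt pairs projective over a compact subset $W\subset Y$; Fujiki's $N_1(X/Y;W)$ formalism is designed for exactly this.

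Next we prove (1) and (2). Because $Y$ is compact, $N^1(X/Y;Y)$ is finite dimensional, and $\overline{\rm NE}(X/Y)$ is the closed convex hull of the cones $\overline{\rm NE}(X/Y;\overline{Y_j})$. The cone decomposition on each piece then gives a countable collection of rational curves $C_i$ with the bound $0<-(K_X+B+\mathbf N_X)\cdot C_i\le 2\dim X$. The bound comes from bend-and-break applied to the klt approximations; since the bound is uniform, letting the perturbation tend to zero preserves it. Discreteness of the rays away from $(K_X+B+\mathbf N_X)^{\ge0}$ follows from the integrality-type bound together with finite dimensionality. When $B+\mathbf N_X$ is $f$-big, Kodaira's lemma lets us absorb an ample part, so $K_X+B+\mathbf N_X\equiv_Y K_X+\Delta+A$ with $A$ ample, and only finitely many rays are negative. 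The existence of the line bundle $L_i$ with $\R^+[C_i]=\overline{\rm NE}(X/Y)\cap L_i^\perp$ comes from rationality of extremal rays in $N^1(X/Y;Y)$: since $N^1$ is spanned by line bundle classes, we can choose a rational supporting hyperplane. For (2) we apply the relative basepoint free theorem of \cite{Fuj22} to $L_i$, which is nef over $Y$ with $L_i-(K_X+\Delta)$ ample over $Y$. This gives a contraction $\varphi:X\to Z$ over $Y$ with $Z\to Y$ projective, since $mL_i$ is relatively globally generated and $Z$ is the relative ${\rm Proj}$.

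Items (3)--(5) then follow the standard pattern. For (3), we need existence of flips, obtained from finite generation of the relative canonical ring via \cite{Fuj22}/\cite{DHY23} (local over $Y$ and glued by the uniqueness of the relative ${\rm Proj}$), and preservation of strong $\Q$-factoriality under divisorial contractions and flips, obtained from the relative Picard number dropping by one. For (4) and (5), the usual BCHM argument applies: termination with scaling for the big case, and abundance for the klt big case via the basepoint free theorem. All of this is valid in the projective-over-compact-base analytic setting by \cite[Theorem E]{Fuj22}, and Mori fiber spaces arise when the pair is not pseudo-effective.

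The main obstacle is the first step. We must pass from the generalized pair with b-divisor $\mathbf N$ to honest klt pairs \emph{globally} over compact $Y$ rather than only over Stein pieces, while keeping strong $\Q$-factoriality and numerical classes compatible across the cover. I would handle this by working with numerical classes in $N^1(X/Y;Y)$ only, so that local klt representatives suffice for every cone-theoretic statement. I would use global line bundles $L_i$, available because the relevant classes are rational, whenever an actual contraction must be constructed.
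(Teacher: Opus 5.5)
Your proposal is correct and follows essentially the same route as the paper. You perturb by a small $f$-ample class to get local klt pairs, apply \cite[Theorem 7.2]{Fuj22} over finitely many relatively compact Stein opens $Y_j$, glue through the surjection of the local cones onto $\overline{\rm NE}(X/Y)$ and let the perturbation go to zero, and then obtain the contractions from the relative base point free theorem and the flips and termination locally via \cite{Fuj22}. Two small points: Fujino's results cannot be quoted with $W=Y$ itself, since property (P) requires a Stein compact, so only your piecewise application works; and your remarks on choosing $L_i$ by rationality and on preserving strong $\Q$-factoriality just fill in steps the paper calls immediate or standard.
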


\begin{proof}
We can write $Y=\bigcup_{i=1}^r U_i$ as a union of finitely many relatively compact Stein open subsets and $X_i:=f^{-1}\left(U_i\right)$. Let $H$ be any $f$-ample $\R$-line bundle on $X$. 

Now for any fixed $i$, since $U_i$ is relatively compact and Stein, by \cite[Theorem 7.2]{Fuj22} (a standard perturbation argument gives us the gpair version), we have
\[
\overline{\mathrm{NE}}\left(X_i / U_i\right)=\overline{\mathrm{NE}}\left(X_i / U_i\right)_{K_X+B+\mathbf N_X+H\geq 0} + \sum_j \mathbb{R}_+ [\Gamma_{ij}],
\] where the last sum is finite, and $\Gamma_{ij}$ are rational curves with $0<-(K_X+B+\mathbf N_X+H)\cdot \Gamma_{ij}\leq 2\dim X$. By letting $H\to 0$ and taking the limit we get
$$
\overline{\mathrm{NE}}\left(X_i / U_i\right)
=
\overline{\mathrm{NE}}\left(X_i / U_i\right)_{K_X+B+\mathbf N_X\geq 0}
+
\sum_j \mathbb{R}_+ [\Gamma_{ij}]
$$
and the last sum becomes countable. Notice that every $f$-vertical curve is contained in some $X_i$. Therefore the canonical map
\[
\oplus_{i}\psi_i: \bigoplus_i\overline{\mathrm{NE}}(X_i / U_i) \to \overline{\rm NE}(X/Y)
\]
is surjective, one should also note that each $\psi_i: \overline{\mathrm{NE}}(X_i / U_i) \to \overline{\rm NE}(X/Y)$ is not necessarily injective. Then (1) follows immediately and (2) is a consequence of the usual relative base-point-free theorem for generalized klt pairs.

For (3), since the contraction theorem holds by (2) and the existence of flips can be checked locally over $Z$ using \cite{Fuj22}, we can run the projective $(K_X+B+\mathbf N_X)$-MMP with scaling of an $f$-ample divisor by some standard arguments.

For (4) and (5), we need to show the termination of the MMP in these situations, which can also be checked locally via \cite{Fuj22}.
\end{proof}

% We say that $\pi : X \to Y$ and $W$ satisfy \textit{Condition~(P)} if
% \begin{enumerate}[label=(P\arabic*)]
%     \item $X$ is a normal complex variety,
%     \item $Y$ is a Stein space,
%     \item $W$ is a Stein compact subset of $Y$,
%     \item $W \cap Z$ has only finitely many connected components for any analytic subset $Z$ defined on an open neighborhood of $W$.
% \end{enumerate}

%Cone theorem stated below shows that the restriction of the Mori cone to the negative half-space of the log canonical divisor is generated by a discrete set of negative extremal rays.

% \begin{theorem}[{\cite[Theorem~7.2]{Fuj22}}]\label{t-pcone}
% Let $(X, B+\bbeta)$ be a klt pair. Let $\pi : X \to Y$ be a projective morphism of complex analytic spaces, and let $W$ be a compact subset of $Y$ such that $\pi : X \to Y$ and $W$ satisfy~\textnormal{(P)}. Then
% $$
% \overline{\mathrm{NE}}(X/Y;W) = \overline{\mathrm{NE}}(X/Y;W)_{K_X+B +\bbeta \geq 0} + \sum_j \mathbb{R}^+[\Gamma_j].
% $$
% Moreover, if $\omega$ is relative K\"ahler over $Y$, then there are only finitely many extremal rays $\mathbb{R}^+[\Gamma_j]$ contained in $(K_X+B+ \bbeta+\omega)_{<0}$.
% \end{theorem}
% \begin{proof}
% This is clear when $\pi$ is bimeromorphic, since in that case we may replace 
% $\bbeta$ by a nef and big divisor and reduce to an ordinary klt pair.
% This suffices for our purposes (says proof of the main result). 

% \end{proof}
We next shift our attention to the \emph{generalized Mori cone} for compact complex analytic varieties. Let $X$ be a normal compact complex analytic variety. We set $N^1(X):= H^{1,1}_{\mathrm{BC}}(X)$. 
In contrast to the projective case, we define  $N_1^{\mathrm{an}}(X)$ as the space of real closed currents of bidimension $(1,1)$ modulo the equivalence relation $T_1 \equiv T_2$ if and only if
$$
T_1(\eta) = T_2(\eta)
$$
for all real closed $(1,1)$-forms $\eta$ admitting local potentials (when there is no ambiguity, we sometimes drop the superscript `an' in $N_1^{\text{an}}(X)$).

\begin{remark}\label{l-anvsalg}
The analytically defined $N_1^{\rm an}(X)$ is finer than $N_1^{\rm alg}(X)$: two curves can be indistinguishable by line bundles and yet separated by a Bott–Chern class, so that $[C]=[C']$ in $N_1^{\rm alg}(X)$, while $[C]\neq[C']$ in $N_1^{\rm an}(X)$.

Indeed, consider the following example: Let $S_1,S_2$ be complex K3 surfaces admitting smooth elliptic fibrations $f_i:S_i\to \mathbb P^1$, with fibers $F_i$, such that $\operatorname{Pic}(S_i)=\mathbb Z[F_i]$ and $F_i^2=0$. Set $X:=S_1\times S_2$ with natural projection $\mathrm{pr}_i: X \to S_i$, and choose points $p_i\in S_i$. Define
$$
C:=F_1\times {p_2},\qquad C':={p_1}\times F_2.
$$

We first show that $C$ and $C'$ are algebraically numerically equivalent. Since $H^1(S_i,\mathbb Z)=0$, the Künneth decomposition gives $H^2(X,\mathbb Z)\simeq \operatorname{pr}_1^*H^2(S_1,\mathbb Z)\oplus \operatorname{pr}_2^*H^2(S_2,\mathbb Z)$. Together with $\operatorname{Pic}(S_i)=\mathbb Z[F_i]$, this implies that for every line bundle $L$ on $X$, there exist $a,b\in \mathbb Z$ such that
$$
c_1(L)=a\operatorname{pr}_1^*[F_1]+b\operatorname{pr}_2^*[F_2].
$$
Therefore $\int_C c_1(L)=aF_1^2=0$ and $\int_{C'}c_1(L)=bF_2^2=0$. Hence $[C]=[C']$ in $N_1^{\rm alg}(X)$.

On the other hand, $C$ and $C'$ are analytically distinct. Let $\omega_1$ be a K\"ahler form on $S_1$, and set $\eta:=\operatorname{pr}_1^*\omega_1\in H^{1,1}_{\rm BC}(X)$. Then
$$
\int_C\eta=\int_{F_1}\omega_1>0,\qquad \int_{C'}\eta=0.
$$
Thus $[C]\neq [C']$ in $N_1^{\rm an}(X)$, although $[C]=[C']$ in $N_1^{\rm alg}(X)$.
%More concretely, there exists a complex torus $T$ of dimension $n \ge 2$ such that $\mathrm{NS}(T)=0$ (see, e.g., \cite[Proposition 1.7]{BZ23}). In this case, every irreducible curve satisfies $[C]=0$ in $N_1^{\rm{alg}}(X)$, whereas for any K\"ahler class $\omega$ one has $\int_C \omega > 0$, and hence $[C] \neq 0$ in $N_1^{\rm{an}}(X)$.
\end{remark}

We next introduce the generalized Mori cone and the transcendental Mori cone. 
\begin{definition}[Generalized Mori cone and transcendental Mori cone]
The \textit{generalized Mori cone} $\overline{\mathrm{NA}}(X) \subset N_1(X)$ 
is the closed cone generated by the classes of closed positive $(1,1)$-currents. We define the \textit{transcendental Mori cone} $\overline{\mathrm{NE}}_{\mathrm{an}}(X)$ to be the closed subcone of $\overline{\mathrm{NA}}(X)$ generated by the integration currents associated with irreducible curves. More precisely, for an irreducible curve $C \subset X$, we denote by
$$
[C]: H^{1,1}_{\mathrm{BC}}(X)\ni \eta \longmapsto \int_C \eta
$$
the corresponding integration current, and set
$$
\overline{\mathrm{NE}}_{\mathrm{an}}(X)
:=\overline{\sum_i \mathbb{R}_{\geq 0}[C_i]}
\subset \overline{\mathrm{NA}}(X),
$$
where the sum runs over all irreducible curves $C_i \subset X$. There is a natural projection
$$
N_1^{\mathrm{an}}(X) \longrightarrow N_1^{\mathrm{alg}}(X),
$$
obtained by restricting functionals from $H^{1,1}_{\mathrm{BC}}(X)$ to $\mathrm{NS}(X)_{\mathbb{R}}$. Under this projection, the transcendental Mori cone $\overline{\text{NE}}_{\rm{an}}(X)$ maps onto the usual Mori cone $\overline{\text{NE}}(X)$.
\end{definition}

The generalized Mori cone also satisfies the cone theorem: the restriction of the generalized Mori cone to the negative half-space of the adjoint class is generated by a discrete set of negative extremal rays.
\begin{theorem}
Let $(X,B+\bbeta )$ be a  compact K\"ahler generalized klt pair. Then there are at most countably many rational curves $\{\Gamma _i\}_{i\in I}$ such that $-(K_X+B+\bbeta _X)\cdot \Gamma _i\leq 2\dim X$ for all $i\in I$ and  \[\overline{\rm NA}(X)=\overline{\rm NA}(X)_{(K_X+B+\bbeta _X) \geq 0}+\sum _{i\in I}\mathbb R ^+[\Gamma _i].\]
Moreover, if $B+\bbeta _X$ or $K_X+B+\bbeta _X$ is big, then $I$ is finite.
\end{theorem}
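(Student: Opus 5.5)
This cone theorem for the generalized Mori cone $\overline{\rm NA}(X)$ is essentially known from the work of Höring–Peternell and Das–Hacon(–Yáñez) in the transcendental setting. I will reduce to the big case and then to the known cone theorem for compact Kähler klt pairs with a Kähler perturbation. The main tool is the duality between $\overline{\rm NA}(X)$ and the nef cone. On a compact Kähler space, a class $\alpha\in H^{1,1}_{\rm BC}(X)$ is nef if and only if $\alpha\cdot T\geq 0$ for every closed positive current $T$ of bidimension $(1,1)$. Hence the closed cone $\overline{\rm NA}(X)$ is dual to the nef cone, which is the closure of the Kähler cone by the Remark above.

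\textbf{Step 1: the contribution in the half-space $\omega$-negative part.} Fix a Kähler form $\omega$. I first show that only finitely many rational curves $\Gamma_i$, with $-(K_X+B+\bbeta_X)\cdot\Gamma_i\le 2\dim X$, span the part of $\overline{\rm NA}(X)$ on which $K_X+B+\bbeta_X+\omega<0$. The plan is to take a class $\alpha=K_X+B+\bbeta_X+t\omega$ at the nef threshold and work with its null locus.
\begin{enumerate}
\item Pass to a log resolution on which $\bbeta$ descends.
\item Replace $\bbeta$ by $\bbeta+\epsilon\omega$. The class $B+\bbeta_X+\epsilon\omega$ is then big, so Lemma~\ref{l-kltbig} puts us in the situation where $\ggamma_{X'}-\omega'$ is nef.
\item Apply the argument of \cite{HP24}: take a maximal-dimensional component $Z$ of the null locus of $\alpha$, show that a resolution of $Z$ is uniruled via \cite[Theorem 2.2]{HP24}, and obtain, by bend-and-break, rational curves $\Gamma$ with $\alpha\cdot\Gamma=0$ and $-(K_X+B+\bbeta_X)\cdot\Gamma\le 2\dim X$.
\end{enumerate}
Since $\omega\cdot\Gamma$ is then bounded and the Douady space has countably many components, the $\omega$-degree bound yields only finitely many such classes. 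The Kähler perturbation is what makes this set finite.

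\textbf{Step 2: the cone decomposition.} Let $V$ be the closed cone $\overline{\rm NA}(X)_{K_X+B+\bbeta_X\ge0}+\sum\mathbb R^+[\Gamma_i]$. I argue by separating hyperplanes.
\begin{enumerate}
\item Suppose $V\neq\overline{\rm NA}(X)$. Then there is a class $\eta$ that is positive on $V\setminus\{0\}$ but negative somewhere on $\overline{\rm NA}(X)$.
\item Choose $\lambda>0$ with $\eta+\lambda\omega$ still not nef. Adjusting this as in Kawamata's argument, produce a class of the form $K_X+B+\bbeta_X+s\omega$ whose null locus meets $\overline{\rm NA}(X)$ outside $V$.
\item Step 1 then gives a new rational curve $\Gamma$ lying outside $V$, which is a contradiction.
\end{enumerate}
Letting $\omega\to 0$ gives countability of the index set $I$.

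\textbf{Step 3: finiteness when $B+\bbeta_X$ is big.}
\begin{itemize}
\item If $B+\bbeta_X$ is big, write $B+\bbeta_X\equiv B_0+\bbeta'_X+\delta\omega$ with $(X,B_0+\bbeta')$ still generalized klt, as in Lemma~\ref{l-kltbig}. Every $(K_X+B+\bbeta_X)$-negative ray is then $(K_X+B_0+\bbeta'_X+\delta\omega)$-negative, so Step 1 gives finiteness.
\item If $K_X+B+\bbeta_X$ is big, write $K_X+B+\bbeta_X\equiv T\geq c\omega$ for a Kähler current $T$. Negative rays must lie in the null locus of $T$, whose components are finitely many. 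Equivalently, use $(1+\epsilon)(K_X+B+\bbeta_X)=K_X+B+\bbeta_X+\epsilon T$ and absorb $\epsilon T$ into the pair after a lct bound.
\end{itemize}

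The main obstacle is Step 1. It requires the uniruledness of the null locus for non-big pairs, and the passage from "$\alpha$ not Kähler" to $\alpha$-trivial rational curves with the length bound $2\dim X$ needs the full strength of \cite{HP24} and \cite{DH25}. I would cite those rather than reprove them.
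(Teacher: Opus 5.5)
\textbf{Main gap: the non-big threshold class.} Step 1 has a genuine gap, and it is exactly the case the paper needs a new result for. You produce curves at the nef threshold $\alpha=K_X+B+\bbeta_X+t\omega$ with the null-locus argument of \cite{HP24}:
\begin{enumerate}
\item take a K\"ahler current in $\alpha$ whose Lelong set is ${\rm Null}(\alpha)$ (Theorem \ref{t-null});
\item use a log canonical threshold to extract a divisor over a maximal component $Z$;
\item prove that $Z$ is uniruled.
\end{enumerate}
Every one of these steps needs $\alpha$ to be big. If $K_X+B+\bbeta_X$ is not pseudo-effective, the threshold class can be nef but not big. Then $\alpha$ contains no K\"ahler current and the argument has nothing to start from. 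Your perturbation $\bbeta\mapsto\bbeta+\epsilon\omega$ makes $B+\bbeta_X$ big, not $\alpha$. The result of \cite{HP24} that the paper uses at this point, \cite[Theorem 3.1]{HP24}, is only applied to big $\alpha$. So deferring to \cite{HP24,DH25} does not close this case. The Kawamata-type separation in your Step 2 will in general produce non-big classes $K_X+B+\bbeta_X+s\omega$. Even when $\alpha$ is big, uniruledness of $Z'$ plus bend-and-break does not by itself give $\alpha$-trivial rational curves. One must show that $\alpha$ is not big on the general fibre of the MRC fibration of $Z'$, and then argue as in the non-big case.

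\textbf{How the paper handles it.} The missing case is Theorem \ref{t-raynonbig}. Suppose $B+\bbeta_X$ is big and $\alpha$ is nef but not big.
\begin{enumerate}
\item Then $K_X$ is not pseudo-effective, so $X$ is uniruled by \cite{Ou25}.
\item On a K\"ahler model $X'\to Z$ of the MRC fibration, $K_Z$ is pseudo-effective. Non-bigness of $\alpha$ then forces $K_{X'/Z}+t\alpha'+(1-\delta)\omega'$ to be non-pseudo-effective.
\item By \cite[Theorem 5.2]{CH20}, $\alpha|_F$ is not big on the general fibre $F$, which is projective and rationally connected.
\item An MMP on $F$ via \cite{BCHM10} gives a covering family of $\alpha$-trivial, $K_F$-negative curves.
\item \cite[Theorem 5.8]{Kol96} turns these into $\alpha$-trivial rational curves.
\end{enumerate}
The proof of Corollary \ref{c-conetheorem} then reduces, via \cite[Theorem 3.1, Corollary 3.2]{HP24}, to finding such a curve for each nef non-K\"ahler class $\alpha=K_X+B+\bbeta_X+\omega$. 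It uses \cite[Theorem 3.1]{HP24} when $\alpha$ is big and Theorem \ref{t-raynonbig} otherwise.

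\textbf{Second omission: $\Q$-factoriality.} You never reduce to the strongly $\Q$-factorial case, which those \cite{HP24} arguments assume. Passing to a log resolution on which $\bbeta$ descends does not help, because the cone lives on $X$. The paper takes a small projective strongly $\Q$-factorial modification $X'\to X$ (Theorem \ref{t-3}). This $X'$ is K\"ahler and crepant for $K_X+B+\bbeta_X$, and the cone decomposition is pushed forward as in \cite[Corollary 5.3]{DHP24}.
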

The strongly $\Q$-factorial case is proven in \cite{HP24}, the proof of the general case above will be given in the next section (Corollary \ref{c-conetheorem}), after developing the necessary tools.

\subsection{Divisorial contractions, flipping contractions}
In the minimal model program, each negative extremal ray $\mathbb{R}_{\geq 0}[\Gamma_i]$ appearing in the cone theorem is associated with an analytic contraction $f_{\Gamma_i}: X\to Z$ of divisorial, flipping, or fiber type.

\begin{definition}[Divisorial contractions]
Let $(X,B+\boldsymbol{\beta})$ be a generalized pair where $X$ is strongly $\Q$-factorial.
A $(K_X+B+\boldsymbol{\beta}_X)$-\emph{divisorial contraction} $f:X\to Z$ is a bimeromorphic morphism with relative Bott--Chern Picard number $\rho_{\rm{BC}}(X/Z)= 1$ and exceptional locus being an (irreducible) divisor, such that $-(K_X+B+ \boldsymbol{\beta}_X)$ is K\"ahler over $Z$. If $\Gamma$ is an extremal ray of $\overline{\text{NA}}(X)$ such that every contracted curve $C$ lies in the ray $[C]\in \Gamma$, then we say that $f:X\to Z$ is the divisorial contraction associated to $\Gamma$.
\end{definition}

\begin{definition}[Flipping contractions]
Let $(X,B+\boldsymbol{\beta})$ be a generalized pair where $X$ is strongly $\Q$-factorial.
A $(K_X+B+\boldsymbol{\beta}_X)$-\emph{flipping contraction} $f: X \rightarrow Z$ is a small bimeromorphic morphism (i.e., an isomorphism in codimension 1) such that $\rho_{\rm{BC}}(X / Z)=1$ and $-\left(K_X+B+ { \boldsymbol{\beta}}_X\right)$ is K\"ahler over $Z$. If $\Gamma$ is an extremal ray of $\overline{\text{NA}}(X)$ such that every contracted curve $C$ lies in the ray $[C]\in \Gamma$, then we say that $f:X\to Z$ is the flipping contraction associated to $\Gamma$. The corresponding \emph{flip} (if it exists) is a small bimeromorphic morphism $f^{+}: X^{+} \rightarrow Z$ such that $\rho_{\rm{BC}}\left(X^{+} / Z\right)=1$ and $K_{X^{+}}+B^{+}+ \boldsymbol{\beta}_{X^{+}}$ is  K\"ahler over $Z$, where $B^{+}$ is the strict transform of $B$.
\end{definition}

 One can prove that there exists a nef supporting class associated to the divisorial/flipping contraction:

\begin{proposition}\label{p-nefsupp}
Let $(X,B+ \bbeta)$ be a compact K\"ahler generalized  klt pair, and let $f:X\to Z$ be a divisorial/flipping contraction of a extremal ray $\Gamma$. Then, there exists a nef class $\alpha \in H^{1,1}_{\rm{BC}}(X)$ such that $\alpha-(K_X+B+\bbeta _X)$ is K\"ahler and
$$
\Gamma=\{z \in \overline{\rm NA}(X) \mid \alpha \cdot z=0\}.
$$
\end{proposition}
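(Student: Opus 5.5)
The plan is the standard supporting-hyperplane construction, adapted from the projective case to the K\"ahler cone theorem. Write $\kappa := K_X+B+\bbeta_X$. Since $\Gamma$ is contracted by a divisorial or flipping contraction, it is a $\kappa$-negative extremal ray of $\overline{\rm NA}(X)$, so $\kappa\cdot\Gamma<0$.

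\emph{Step 1: a finite decomposition of the cone.} Fix a K\"ahler class $\omega$ on $X$ and $0<\epsilon\ll 1$. Apply the cone theorem for compact K\"ahler generalized klt pairs (proved in the next section, Corollary \ref{c-conetheorem}) to $(X,B+(\bbeta+\epsilon\bar\omega))$; this is still generalized klt, since $\bar\omega$ is b-nef and smooth. Because $\omega$ is K\"ahler, only finitely many of the negative rays $\Gamma=\Gamma_1,\dots,\Gamma_k$ are $(\kappa+\epsilon\omega)$-negative. We obtain
\[\overline{\rm NA}(X)=\overline{\rm NA}(X)_{\kappa+\epsilon\omega\ge 0}+\sum_{j=1}^k\R^+[\Gamma_j],\]
with $\Gamma$ among the $\Gamma_j$ once $\epsilon$ is small.

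\emph{Step 2: the supporting class.} The set $V:=\overline{\rm NA}(X)_{\kappa+\epsilon\omega\ge0}+\sum_{j\ge2}\R^+[\Gamma_j]$ is a closed convex cone not containing $\Gamma$, because $\Gamma$ is extremal. I look for $\alpha=\kappa+\omega'$ with $\omega'$ K\"ahler, $\alpha\cdot\Gamma=0$, and $\alpha>0$ on $V\setminus\{0\}$. Since the K\"ahler cone is open in $H^{1,1}_{\rm BC}(X)$, which is dual to $N_1(X)$, the standard linear-algebra argument applies: choose a class $h$ that is positive on $V\setminus\{0\}$ and negative on $\Gamma$, and put $\omega'=\epsilon\omega+\delta h$ for small $\delta$. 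Then rescale along the segment $\kappa+s\omega'$, $s>0$, to reach the threshold where $\alpha\cdot\Gamma=0$. Concretely, I would take $\alpha=\kappa+t\eta$ with $\eta$ K\"ahler and $t$ the nef threshold $\sup\{t:\kappa+t\eta\ \text{not nef}\}$. Choose $\eta$ generic in the (open) K\"ahler cone so that this threshold is attained exactly on $\Gamma$ and on no other $\Gamma_j$; finiteness in Step 1 makes this a finite set of linear conditions to avoid. Then $\alpha-\kappa=t\eta$ is K\"ahler.

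\emph{Step 3: checking the properties.} Nefness of $\alpha$: by the duality between the nef cone and $\overline{\rm NA}(X)$ for compact K\"ahler varieties (the nef cone is the closure of the K\"ahler cone, which is dual to $\overline{\rm NA}(X)$), it suffices to have $\alpha\ge0$ on $\overline{\rm NA}(X)$. This follows from the decomposition, since $\alpha\ge\kappa+\epsilon\omega$ on the first summand when $t\eta\ge\epsilon\omega$, and $\alpha\ge 0$ on each $\Gamma_j$ by the choice of $t$. The equality $\Gamma=\alpha^\perp\cap\overline{\rm NA}(X)$ holds because $\alpha$ is strictly positive on $V\setminus\{0\}$.

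The main obstacle is the duality needed in Step 3: nef classes must be exactly those nonnegative on $\overline{\rm NA}(X)$ for a possibly singular compact K\"ahler $X$, in the Bott--Chern setting. This is the singular analogue of Demailly--P\u{a}un / Hahn--Banach duality. I would invoke it as in \cite{HP24}, approximating on a resolution if necessary.
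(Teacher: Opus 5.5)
\textbf{There is a genuine gap in Step 2, and it sits exactly where the content of the proposition lies.} Step 1 is fine.

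The small perturbation $\omega'=\epsilon\omega+\delta h$ does not work. Rescale $\kappa+s\omega'$ until it vanishes on $\Gamma=\R^+[\gamma]$. As $\delta\to 0$, $s\epsilon$ tends to the $\omega$-threshold $-\kappa\cdot\gamma/\omega\cdot\gamma$ of $\Gamma$. Nothing then forces $(\kappa+s\omega')\cdot\Gamma_j>0$ for the other $(\kappa+\epsilon\omega)$-negative rays. If some $\Gamma_j$ has a larger ratio $-\kappa\cdot\gamma_j/\omega\cdot\gamma_j$, then $\alpha$ is negative on $\Gamma_j$ and is not nef.

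The ``concrete'' version has the same problem. Choosing $\eta$ generic, i.e.\ avoiding finitely many hyperplanes, only guarantees that the nef threshold is attained on a \emph{unique} ray. Which ray that is depends on the chamber containing $\eta$. You have not shown that the chamber where the ray is $\Gamma$ meets the K\"ahler cone, and showing this is equivalent to the statement being proved. Consequently, the strict positivity of $\alpha$ on $V\setminus\{0\}$ invoked in Step 3 is unproved. The inequality $t\eta\ge\epsilon\omega$ used there is also unjustified, although nefness at the threshold is automatic anyway.

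\textbf{The missing idea is the paper's.}
\begin{itemize}
\item[(i)] Start from a class $h$ with $h\ge 0$ on $\overline{\rm NA}(X)$ and $h^\perp\cap\overline{\rm NA}(X)=\Gamma$. Your Step 1 actually supplies this, which the paper uses tacitly. The image of $V$ in the quotient of $N_1(X)$ by the line spanned by $\Gamma$ is closed and pointed, by extremality and salience. A functional strictly positive on that image pulls back to such an $h$.
\item[(ii)] Show that $h-\delta\kappa$ is strictly positive on $\overline{\rm NA}(X)\setminus\{0\}$ for $0<\delta\ll 1$. On the compact slice $\{z\in\overline{\rm NA}(X):\omega\cdot z=1\}$, $\kappa$ is negative on a neighbourhood $U$ of the point of $\Gamma$. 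Off $U$, $h$ is bounded below by a positive constant while $|\kappa|$ is bounded.
\item[(iii)] By the duality you mention, $h-\delta\kappa$ is K\"ahler. Set $\alpha:=h/\delta$. Then $\alpha$ is nef, $\alpha^\perp\cap\overline{\rm NA}(X)=\Gamma$, and $\alpha-\kappa=(h-\delta\kappa)/\delta$ is K\"ahler.
\end{itemize}
Equivalently, $\eta:=(h-\delta\kappa)/\delta$ is a K\"ahler class whose nef threshold is $1$ and is attained only on $\Gamma$. This is precisely the $\eta$ your Step 2 needed but did not construct.
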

\begin{proof}
Since $\Gamma$ is an extremal ray, there is a class $\alpha$ such that $\Gamma =\overline{\rm NA}(X)\cap \alpha ^\perp$ and by possibly replacing $\alpha$ by $-\alpha$, we may assume $\alpha\cdot \gamma\geq 0$ for any $\gamma\in \overline{\rm NA}(X)$. By the proof of \cite[Claim 3.23]{DHY23}, one easily sees that $\alpha -\delta (K_X+B+\bbeta _X)$ is positive on $\overline{\rm NA}(X)$ for any $0<\delta \ll 1$. But then $\alpha -\delta (K_X+B+\bbeta _X)$ is K\"ahler. 
Let $\alpha '=\frac 1 \delta \alpha$, then $\alpha '$ is nef, $\Gamma =\overline{\rm NA}(X)\cap (\alpha') ^\perp$, and 
$\alpha'-(K_X+B+\bbeta _X)=\frac 1\delta(\alpha -\delta(K_X+B+\bbeta _X))$ is K\"ahler.
Replacing $\alpha$ by $\alpha'$, the claim follows.
\end{proof}

Divisorial contractions and flips preserve dlt singularities.
\begin{lemma}
If $X$ is strongly $\Q$-factorial and $(X,B+\bbeta)$ is gdlt and $\phi:X\dasharrow X'$ is a flip or divisorial contraction, then $(X',B'+\bbeta )$ is strongly $\Q$-factorial and gdlt. 
\end{lemma}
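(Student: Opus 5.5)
We treat strong $\Q$-factoriality and the gdlt condition separately, following the projective argument (cf. Koll\'ar--Mori, Cor. 3.18 and 3.44). Throughout, $f:X\to Z$ is the contraction of the extremal ray $\Gamma$, $\phi=f$ in the divisorial case and $\phi=(f^+)^{-1}\circ f$ in the flipping case, and $X'=Z$ or $X'=X^+$ respectively.

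\emph{Strong $\Q$-factoriality.} Let $\mathcal F'$ be a divisorial sheaf on $X'$, i.e.\ $\mathcal F'=\OO_{X'}(D')$ for a Weil divisor $D'$ (locally). Let $D$ be its strict transform on $X$; since $X$ is strongly $\Q$-factorial, $mD$ is Cartier for some $m$.

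In the divisorial case, let $E$ be the exceptional divisor. Since $\rho_{\rm BC}(X/Z)=1$ and $E\cdot\Gamma<0$ (negativity lemma, Lemma \ref{l-negativity}), there is $a\in\Q$ with $(D+aE)\cdot\Gamma=0$. The class of the line bundle $\OO_X(m(D+aE))$ is then $f$-numerically trivial. Working locally over a Stein cover of $Z$, and using Lemma \ref{l-rel} together with the relative base point free theorem (Theorem \ref{t-projcone}(2), which is available since $f$ is projective over $Z$ with $-(K_X+B+\bbeta_X)$ relatively K\"ahler), we conclude that some multiple descends to a line bundle on $Z$. Its pushforward is $\mathcal F'^{[mk]}$, which is therefore locally free.

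In the flipping case, we do the same on $X$ with $a=0$ replaced by subtracting a multiple of $K_X+B+\bbeta_X$ (made integral locally via Lemma \ref{l-rel}). We then transport to $X^+$, where $f^+$ has relatively K\"ahler $K_{X^+}+B^++\bbeta_{X^+}$. Using $\rho_{\rm BC}(X^+/Z)=1$, $D^+$ is $\R$-linearly equivalent over $Z$ to a multiple of $K_{X^+}+B^++\bbeta_{X^+}$ plus a pullback. A Cartier representative comes from the fact that $D'$ is $\Q$-Cartier on $Z$ up to this multiple, and $\OO_{X^+}$-relative ampleness of $D^+$ or $-D^+$ via finite generation of $\bigoplus f_*\OO(mD)$ (Fuj22 locally).

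\emph{Main obstacle.} This descent step is where I expect the difficulty: the contraction is only bimeromorphic, not globally projective, and the supporting class is transcendental. The plan is to argue locally over relatively compact Stein opens of $Z$, where $f$ is projective and Lemma \ref{l-rel} converts Bott--Chern classes into $\R$-line bundles. Local $\Q$-Cartierness is local, so this suffices for strong $\Q$-factoriality.

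\emph{gdlt.} We use the standard discrepancy comparison. Take a common resolution $p:W\to X$, $q:W\to X'$ on which $\bbeta$ descends. Write
\[
p^*(K_X+B+\bbeta_X)=q^*(K_{X'}+B'+\bbeta_{X'})+G .
\]
The negativity lemma, applied to the $f$-exceptional part and using that $K_X+B+\bbeta_X$ is $f$-anti-ample (and, for flips, that $K_{X^+}+B^++\bbeta_{X^+}$ is $f^+$-ample), gives $G\ge 0$, with $G>0$ over $\operatorname{Ex}(\phi)$. Hence $a(P,X',B'+\bbeta)\ge a(P,X,B+\bbeta)$ for all $P$, with strict inequality when the center of $P$ on $X$ lies in $\operatorname{Ex}(\phi)$.

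Now take the open set $U\subset X$ witnessing gdlt and set $U'=\phi(U\setminus \operatorname{Ex}(\phi))$, on which $\phi$ is an isomorphism. Consider a divisor $P$ with center in $X'\setminus U'$.
\begin{itemize}
\item If its center on $X$ lies outside $U$, then the discrepancy was already $>-1$ on $X$, hence also on $X'$.
\item If its center on $X$ meets $\operatorname{Ex}(\phi)$, the strict inequality gives a discrepancy $>-1$, since it was $\ge -1$ before.
\end{itemize}
Finally, coefficients of $B'$ are those of $B$ (with $E$ removed), so the condition $-1\le a\le 0$ on $U'$ persists. This proves that $(X',B'+\bbeta)$ is gdlt.
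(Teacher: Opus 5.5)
Your gdlt argument is the same as the paper's. Discrepancies do not decrease under $\phi$, and they strictly increase for divisors whose center on $X$ lies in ${\rm Ex}(\phi)$. A divisor with center in $X'\setminus U'$ then falls into one of two cases. One correction: in your second case you need $c_X(P)\subset{\rm Ex}(\phi)$, not merely that $c_X(P)$ meets ${\rm Ex}(\phi)$, since strict inequality fails when $\phi$ is an isomorphism at the generic point of $c_X(P)$. The two cases are exhaustive because if $c_X(P)\not\subset{\rm Ex}(\phi)$, then $c_{X'}(P)$ is the closure of $\phi(c_X(P)\setminus{\rm Ex}(\phi))$, and $c_{X'}(P)\cap U'=\emptyset$ forces $c_X(P)\subset X\setminus U$.

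\textbf{The gap is in strong $\Q$-factoriality}, which the paper does not reprove but quotes from \cite[Lemma 2.5]{DH25}.

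\emph{The flipping case does not go through as written.} After base point freeness you have, over a small Stein $Z_i\subset Z$, $D\sim_{\R}bL_i+f^*M_i$, with $L_i$ a local $\R$-line bundle in the class of $K_X+B+\bbeta_X$. Since $\phi$ is small, this transports to $D^+\sim_{\R}bL_i^++(f^+)^*M_i$. Everything hinges on the strict transform $L_i^+$ being $\R$-Cartier on $X^+$, and your sketch never shows this:
\begin{itemize}
\item Appealing to $\rho_{\rm BC}(X^+/Z)=1$ is circular. It only concerns classes already in $H^{1,1}_{\rm BC}(X^+)$, so it presupposes that $D^+$ is $\R$-Cartier.
\item $\Q$-Cartierness of $M_i$ on $Z$ says nothing about $L_i^+$.
\item The finite-generation remark gestures at, but does not carry out, the identification you need.
\end{itemize}
The missing idea is to realize $X^+$, locally over $Z$, as the relative ample model of a klt divisor:
\begin{enumerate}
\item Choose an $\R$-divisor $\Delta_i$ with $(X,\Delta_i)$ klt and $K_X+\Delta_i\equiv_{Z_i}\lambda(K_X+B+\bbeta_X)$ for some $\lambda>0$. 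To do this, lower $\lfloor B\rfloor$, using that it is $\Q$-Cartier and that all contracted curves lie in $\Gamma$, and replace $\bbeta_X$ by a general divisor.
\item Take $L_i=\lambda^{-1}(K_X+\Delta_i)$.
\item Show, via \cite{Fuj22} and uniqueness of relative ample models, that $X^+|_{Z_i}$ is the relative lc model of $(X,\Delta_i)$. Then $K_{X^+}+\Delta_i^+$ is $\R$-Cartier and $f^+$-ample, and so is $L_i^+$.
\end{enumerate}

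\emph{Two further points need repair.}
\begin{itemize}
\item Strong $\Q$-factoriality of $X$ applies only to global divisorial sheaves. So $D$ must be the strict transform of the global sheaf $\mathcal F'$ (e.g.\ $(f^*\mathcal F')^{**}$ in the divisorial case), not of a local Weil divisor.
\item Theorem~\ref{t-projcone}(2) is stated for projective morphisms and gklt pairs with $\R$-Cartier b-divisors. Even in the divisorial case you must supply local projectivity of $f$, a klt perturbation of the gdlt pair, and the rational singularities of $Z$ required by Lemma~\ref{l-rel}.
\end{itemize}
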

\begin{proof}
Let $U\subset X$ be the biggest open subset such that $\phi |_U$ is an isomorphism and $U'=\phi (U)$.
Since $(X,B+\bbeta)$ is gdlt, there is an open subset $V\subset X$ such that $(V,(B+\bbeta )|_V)$ is a log resolution of itself, $-1\leq a(P,X,B+\bbeta)\leq 0$ for any prime divisor $P$ on $V$, and $-1<a(P,X,B+\bbeta)$ for any prime divisor $P$ over $X$ with center contained in $Z:=X\setminus V$. Let $V'=\phi(V\cap U)$, then
$(V', (B'+\bbeta ')|_{V'})$  is a log resolution of itself. The only thing to prove is $a\left(P, X^{\prime}, B^{\prime}+\bbeta^{\prime}\right)>-1$ for any prime divisor $P$ over $X'$ whose center is contained in $Z':= X' \setminus V'$. For those $P$, the center $c_X(P)$ on $X$ cannot lie inside $V\cap U$. Thus, there are two possible cases: $c_X(P) \subset X \setminus  V$ or $c_X(P)\subset \operatorname{Ex}(\phi)$. In the first case, we have $a\left(P, X^{\prime}, B^{\prime}+\bbeta^{\prime}\right) \geq a(P, X, B+\bbeta)>-1$, while in the second case, we have $a\left(P, X^{\prime}, B^{\prime}+\bbeta^{\prime}\right)>a(P, X, B+\bbeta) \geq-1$. Thus $a(P,X',B'+\bbeta')>-1$ for all $P$ with center contained in $Z'$ and so $(X',B'+\bbeta')$ is gdlt. The strong $\Q$-factoriality of $X'$ follows from \cite[Lemma 2.5]{DH25}
 \end{proof}
%We will also need the following relative version of the cone theorem. \begin{theorem}[{\cite[Theorem 0.5]{HP24}}]\label{t-conerel} Let $f:X\to Y$ be a contraction morphism of compact normal K\"ahler varieties such that $\dim X=n$, $X$ is  $\mathbb{Q}$-factorial, and $(X, B+\boldsymbol{\beta})$ is generalized klt, then there are at most countably many $f$-vertical rational curves $\left\{\Gamma_i\right\}_{i \in I}$ such that$$\overline{\mathrm{NA}}(X/Y)=\overline{\mathrm{NA}}(X/Y)_{K_X+B+\boldsymbol{\beta}_X \geq 0}+\sum_{i \in I} \mathbb{R}^{+}\left[\Gamma_i\right]$$where $0<-\left(K_X+B+\boldsymbol{\beta}_X\right) \cdot \Gamma_i \leq 2 n$. \end{theorem}\begin{proof}\end{proof}
\subsection{Log minimal models and log canonical models}

In this subsection, we introduce log minimal models and log canonical models.
\begin{definition}[Log minimal models, good minimal models]
Let $(X,B+\bbeta)$ be a generalized klt pair over $S$. We say that a proper bimeromorphic map $\phi: (X,B+ \bbeta)\dasharrow (X',B'+ \bbeta)$ over $S$ is a \textit{log minimal model} if 

(a) $(X',B'+ \bbeta)$ is strongly $\Q$-factorial generalized klt pair,

(b) $\phi$ is a bimeromorphic contraction (and hence extracts no divisors),

(c) $K_{X'}+B'+ \bbeta_{X'}$ is nef over $S$,

(d) for each $\phi$-exceptional divisor $E$, the discrepancy satisfies $a(E,X,B+\bbeta)< a(E,X',B'+ \bbeta')$.

If furthermore $K_{X'}+B'+\bbeta'$ is semi-ample over $S$, then $\phi:  (X,B+ \bbeta)\dasharrow (X',B'+ \bbeta')/S$ is a good minimal model. 
\end{definition}

\begin{definition}[{Weak log canonical models, log canonical models, \cite[Definition 2.11]{DHY23}}]\label{def-lcm}
If $(X , B+\boldsymbol{\beta})$ is a generalized dlt pair over $S$, then we say that a bimeromorphic map $\phi: X \dasharrow X^m$ (proper over $S$) is a weak log canonical model over $S$ (resp. a log canonical model over $S$) if (1-3) below hold (resp. (1-4) below hold).

(1) $(X^m, B^m+\boldsymbol{\beta})$ is a generalized lc pair, where $B^m=\phi_* B+E$, and $E$ is the reduced sum of all $\phi^{-1}$-exceptional divisors,

(2) $K_{X^m}+B^m+\boldsymbol{\beta}_{X^m}$ is nef over $S$,

(3) $a(P, X, B+ \boldsymbol{\beta}) \leq a\left(P, X^m, B^m+ \boldsymbol{\beta}\right)$ for every $\phi$-exceptional divisor $P$, and

(4) $\left[K_{X^{ m}}+B^m+\boldsymbol{\beta}_{X^m}\right] \in H_{\mathrm{BC}}^{1,1}\left(X^m\right)$ is a K\"ahler class.
\end{definition}

\begin{lemma}
Let $(X, B + \bbeta)$ be a generalized klt pair and $(X, B + \Delta)$ be a klt pair. Assume that $K_X + B + \bbeta_X \equiv K_X + B + \Delta$. Then $\phi : X \dasharrow Z$ is a generalized log canonical model for $K_X + B + \bbeta_X$ if and only if $\phi:X\dasharrow Z$ is a log canonical model for $K_X + B + \Delta$.
\end{lemma}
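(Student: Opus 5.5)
The lemma is essentially formal: we unwind Definition \ref{def-lcm} for both pairs and check that each of the conditions (1)--(4) is invariant under replacing $\bbeta$ by $\Delta$. The key observation is that the generalized discrepancies of $(X,B+\bbeta)$ coincide with the usual discrepancies of $(X,B+\Delta)$ on every model. The reason is that both are computed from the same numerical class $K_X+B+\bbeta_X\equiv K_X+B+\Delta$, together with the fact that on the target $Z$ the corresponding classes differ by the push-forward of a numerically trivial class.

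Concretely, I take a common log resolution $p:W\to X$, $q:W\to Z$ of $\phi$ on which $\bbeta$ descends. I write
\[
K_W+B_W+\bbeta_W=p^*(K_X+B+\bbeta_X),\qquad K_W+B'_W+p^*\Delta=p^*(K_X+B+\Delta).
\]
Since $B'_W-B_W\equiv \bbeta_W-p^*\Delta$ and $B'_W-B_W$ is $p$-exceptional, the negativity lemma (or Demailly's support theorem, applied as in Lemma \ref{l-modnef}) shows that $B'_W-B_W$ is an exceptional divisor numerically trivial over $X$, hence $0$. So $a(P,X,B+\bbeta)=a(P,X,B+\Delta)$ for every divisor $P$ over $X$.

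On $Z$, I set $\Delta_Z:=\phi_*\Delta$ and $B_Z:=\phi_*B+E$ as in (1). The class $K_Z+B_Z+\bbeta_Z$ is the push-forward under $q$ of $K_W+(\text{boundary})+\bbeta_W$, and similarly for $\Delta$. Thus
\[
K_Z+B_Z+\bbeta_Z\equiv K_Z+B_Z+\Delta_Z,
\]
because $q_*$ respects numerical equivalence of the difference $\bbeta_W-p^*\Delta\equiv B'_W-B_W=0$. Conditions (2) and (4), namely nefness and K\"ahlerness of the adjoint class on $Z$, depend only on the Bott--Chern class and therefore coincide. For (1) and (3), I compare the pullbacks $q^*(K_Z+B_Z+\bbeta_Z)$ and $q^*(K_Z+B_Z+\Delta_Z)$. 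Both are written as $K_W+(\cdot)$, and their boundaries differ by a $q$-exceptional numerically trivial divisor, which again vanishes by the negativity lemma. Hence $a(P,Z,B_Z+\bbeta)=a(P,Z,B_Z+\Delta_Z)$ for all $P$. This gives equivalence of (1), since generalized lc is equivalent to lc, and of (3).

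The main point requiring care is the negativity/support argument that a $q$-exceptional $\R$-divisor which is numerically trivial over $Z$ is zero in the analytic, non-projective setting. I would handle this by working locally over relatively compact Stein opens of $Z$, where $q$ can be assumed projective after further blowing up, and applying Lemma \ref{l-negativity} to both $D$ and $-D$. A second subtlety is that $\bbeta_Z$ needs to be interpreted as a Bott--Chern class via the pushforward identity in condition (3) of the generalized pair definition. I expect this bookkeeping, rather than any deep input, to be the only real obstacle.
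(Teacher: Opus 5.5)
Your key claim, that $a(P,X,B+\bbeta)=a(P,X,B+\Delta)$ for every divisor $P$ over $X$ (and likewise over $Z$), is false, and the step proving it misuses the negativity lemma. From $p^*(K_X+B+\bbeta_X)\equiv p^*(K_X+B+\Delta)$ you only get $B'_W-B_W\equiv_X\bbeta_W$ (granting that $\Delta$ is $\R$-Cartier, so that $p^*\Delta$ makes sense). But $\bbeta_W$ is merely nef over $X$, not numerically trivial over $X$. The hypothesis $\bbeta_X\equiv\Delta$ only constrains the trace of $\bbeta$ on $X$, while generalized discrepancies depend on the whole b-divisor.

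For example, let $X$ be a smooth projective surface and $B=0$. Let $p\colon W\to X$ be the blow-up of a point, with exceptional curve $E_0$, and let $H$ be very ample. Take $\bbeta$ descending to $W$ with $\bbeta_W=p^*H-\tfrac12E_0$ (which is nef), and $\Delta=\tfrac12(D_1+D_2)$ with $D_i\in|H|$ general. Then $\bbeta_X=H\equiv\Delta$, yet $B'_W-B_W=-\tfrac12E_0\neq0$, i.e.\ $a(E_0,X,\bbeta)=\tfrac12\neq1=a(E_0,X,\Delta)$.

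The same failure occurs over $Z$, since $\bbeta_W$ is not $q$-numerically trivial either. Your derivation of $K_Z+B_Z+\bbeta_Z\equiv K_Z+B_Z+\Delta_Z$ from ``$\bbeta_W-p^*\Delta\equiv B'_W-B_W=0$'' rests on the same false identity. So your comparison of conditions (1)--(4) is unsupported as written.

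What is true, and what the paper's proof uses, is that the \emph{relative} discrepancies $a(P,Z,\cdot)-a(P,X,\cdot)$ agree. Write $p^*(K_X+B+\Delta)=q^*(K_Z+B_Z+\Delta_Z)+E$ and $p^*(K_X+B+\bbeta_X)=q^*(K_Z+B_Z+\bbeta_Z)+F$. Both $E$ and $F$ are $q$-exceptional and $E-F\equiv_Z0$, so $E=F$ by the negativity lemma. This is precisely where your local-over-$Z$ argument applies correctly.

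It follows that $q^*$ of the difference of the two adjoint classes on $Z$ is numerically trivial. Hence $K_Z+B_Z+\bbeta_Z\equiv K_Z+B_Z+\Delta_Z$, which gives the equivalence of (2) and (4). Since $\mathrm{coeff}_PF=a(P,Z,B_Z+\bbeta)-a(P,X,B+\bbeta)$, and similarly for $E$, condition (3) is the same for both pairs.

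Condition (1) should not be obtained by comparing discrepancies on $Z$ at all. Given (2) and (3), $-F$ is nef over $X$ and $p_*F\ge0$, so $F\ge0$ by negativity. Hence $a(P,Z,\cdot)\ge a(P,X,\cdot)>-1$ for both pairs.
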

\begin{proof}
Let $B_Z = \phi_* B$, $\Delta_Z = \phi_*\Delta$, $\bbeta_Z = \phi_*\bbeta_X$, and let $p: W \to X$ and $q: W \to Z$ be a common resolution of $\phi: X \dasharrow Z$. We write
$$p^*(K_{X}+ B + \Delta) = q^*(K_Z+B_Z+\Delta_Z)+E, \ \ p^*(K_{X}+ B + \bbeta_X) = q^*(K_Z+B_Z+\bbeta_Z)+F.$$
Since $K_X+B+\Delta \equiv K_X+B+\bbeta_X$, the negativity lemma \cite[Lemma 2.8]{DH24} gives $E = F$, hence
$$K_Z+B_Z+\Delta_Z \equiv K_Z+B_Z+\bbeta_Z.$$
In particular, $K_Z+B_Z+\Delta_Z$ is ample if and only if $K_Z+B_Z+\bbeta_Z$ is K\"ahler, and $\phi$ is $(K_X+B+\Delta)$-non-positive if and only if it is $(K_X+B+\bbeta_X)$-non-positive. The result then follows from Definition~\ref{def-lcm}.
\end{proof}

Log canonical models are well behaved under log resolutions:
\begin{lemma}[{\cite[Lemma 2.12]{DHY23}}]
Let $f: X^{\prime} \rightarrow X$ be a log resolution of $(X / S, B+\boldsymbol{\beta})$ and $K_{X^{\prime}}+B^*+\boldsymbol{\beta}_{X^{\prime}}=$ $f^*\left(K_X+B+\boldsymbol{\beta}_X\right)+F$, where $B^* \geq 0, f_* B^*=B$ and $F \geq 0$ is $f$ exceptional. Assume that for every $f$-exceptional divisor $P$ with $a(P, X, B+$ $\boldsymbol{\beta})>0$ we have $P \subset \operatorname{Supp}(F)$. Then any log canonical model of $\left(X^{\prime} / S, B^*+\boldsymbol{\beta}\right)$ over $S$ is a log canonical model of $(X / S, B+\boldsymbol{\beta})$ over $S$.
\end{lemma}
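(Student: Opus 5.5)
The argument compares $K_{X'}+B^*+\bbeta_{X'}$ with $f^*(K_X+B+\bbeta_X)$ on $X'$ and shows that passing to a log canonical model forces every component of $F$ to be contracted. This is the standard argument of \cite[Lemma 2.12]{DHY23} and of the projective analogue in \cite{BCHM10}, and I would follow it directly.

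\emph{Step 1: $F$ is contained in the exceptional locus of the model.} Let $\psi: X'\dasharrow Y$ be a log canonical model of $(X'/S,B^*+\bbeta)$ over $S$, and put $\phi=\psi\circ f^{-1}$. Take a common resolution $p:W\to X'$, $q:W\to Y$, chosen so that $\bbeta$ descends to $W$. Since $K_Y+B_Y+\bbeta_Y$ is K\"ahler over $S$ and $\psi$ is $(K_{X'}+B^*+\bbeta_{X'})$-non-positive (condition (3) of Definition \ref{def-lcm}), we can write
\[
p^*(K_{X'}+B^*+\bbeta_{X'})=q^*(K_Y+B_Y+\bbeta_Y)+E
\]
with $E\ge0$ and $q$-exceptional. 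Substituting the hypothesis gives
\[
p^*f^*(K_X+B+\bbeta_X)+p^*F=q^*(K_Y+B_Y+\bbeta_Y)+E .
\]
Push this forward to $X$ via $f\circ p$. The term $p^*F$ disappears because $F$ is $f$-exceptional, so $(f\circ p)_*q^*(K_Y+B_Y+\bbeta_Y)+(f\circ p)_*E$ equals $K_X+B+\bbeta_X$. Now apply the negativity lemma over $X$ (Lemma \ref{l-negativity}, or \cite[Lemma 2.8]{DH24} in the relatively nef setting) to
\[
q^*(K_Y+\dots)-p^*f^*(K_X+\dots)=p^*F-E .
\]
This class is nef over $Y$, and I would argue through the divisorial Zariski decomposition over $X$ (Proposition \ref{p-negZariski}). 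The expected conclusion is that $p^*F\le E$, and more precisely that every component of $F$ is a component of $E$, so that every component of $\operatorname{Supp}F$ is contracted by $\psi$.

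\emph{Step 2: the remaining divisors.} After Step 1, the divisors contracted by $\phi$ on $X$ are those contracted by $\psi$, together with the $f$-exceptional divisors, which all lie in $\operatorname{Supp}F$ or have $a\le 0$. Let $P$ be an $f$-exceptional divisor with $a(P,X,B+\bbeta)\le0$ that is not in $\operatorname{Supp} F$. Its coefficient in $B^*$ is $-a\ge 0$, which is at most $1$ by log canonicity. If $P$ is not contracted on $Y$, then $P$ appears in $B_Y$ with coefficient $-a(P,X,B+\bbeta)$, and $Y$ is then a log canonical model in the sense of Definition \ref{def-lcm} applied to $X$ via $\phi$; here one uses the convention that $\phi^{-1}$-exceptional divisors enter with coefficient $1$. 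Reconciling this convention is a bookkeeping point, which I expect to handle through the definition of $B^*$ (taking coefficient $1$ where needed) and the equality of discrepancies along $P$.

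\emph{Step 3: checking the definition for $X$.} It remains to verify conditions (1)--(4) of Definition \ref{def-lcm} for $\phi$.
\begin{itemize}
\item Conditions (1), (2) and (4) concern $Y$ alone, and they are unchanged.
\item Condition (3) follows from the displayed identity: for a $\phi$-exceptional $P$, we get $a(P,X,B+\bbeta)=a(P,X',B^*+\bbeta)-\operatorname{mult}_P F$, and this is at most $a(P,Y,B_Y+\bbeta)$.
\end{itemize}

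\emph{Main obstacle.} I expect the hardest part to be Step 1: showing that every component of $F$ is contracted. This is exactly where the hypothesis is needed that each $P$ with $a>0$ lies in $\operatorname{Supp}F$. The approach I would try first is to use the fact that $K_{X'}+B^*+\bbeta_{X'}\equiv_X F$. The relative Zariski decomposition over $X$ then gives $N_\sigma=F$ by Proposition \ref{p-negZariski}, and any log canonical model over $S$ must contract $N_\sigma$, since the positive part of the pullback to $W$ is $q^*(K_Y+\dots)$.
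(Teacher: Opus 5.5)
The paper gives no proof of this lemma; it only cites \cite[Lemma 2.12]{DHY23}. I therefore compare your proposal with the standard negativity-lemma argument. Your skeleton (common resolution, $E\ge 0$ and $q$-exceptional, negativity over $X$) is right, but Step 1 is not actually carried out, and Step 2 has a genuine gap.

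\textbf{Step 1.} The class $p^*F-E$ is not nef over $Y$ in general. What is true is that $-(E-p^*F)\equiv_X q^*(K_Y+B_Y+\bbeta_Y)$ is nef over $X$, because a class nef over $S$ is nef over $X$. Since $F$ is $f$-exceptional, $(f\circ p)_*(E-p^*F)=(f\circ p)_*E\ge 0$. The negativity lemma over $X$ then gives $G:=E-p^*F\ge 0$ at once, so every component of $F$ is $\psi$-exceptional. No Zariski decomposition is needed. Your $N_\sigma$ route mixes $N_\sigma$ over $X$ (which is what Proposition \ref{p-negZariski} computes) with contraction by a model over $S$, and would need further unproved comparisons. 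Also, the $\operatorname{Supp}F$ hypothesis is not what drives this step. Since $B^*\ge 0$, we have $\operatorname{mult}_PF=\operatorname{mult}_PB^*+a(P,X,B+\bbeta)$, so the hypothesis holds automatically.

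\textbf{Step 2 (the gap).} Suppose an $f$-exceptional $P\not\subset\operatorname{Supp}F$ survived on $Y$. Its coefficient in $B_Y=\psi_*B^*$ would be $-a(P,X,B+\bbeta)$, which equals $1$ only if $a=-1$. Since $B^*$ is given, no bookkeeping can fix this. Condition (1) of Definition \ref{def-lcm} for $\phi$ would then fail; note that (1) does not concern $Y$ alone. What you must show is that no $f$-exceptional divisor survives, and this uses condition (4). Start from
\[
p^*f^*(K_X+B+\bbeta_X)=q^*(K_Y+B_Y+\bbeta_Y)+G,\qquad G\ge 0 \text{ and } q\text{-exceptional}.
\]
Let $P_W$ be the strict transform of a surviving $P$, and let $\Gamma\subset P_W$ be a general curve contracted by $f\circ p$. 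Such curves cover $P_W$, since $f(P)$ has codimension $\ge 2$. Intersecting with $\Gamma$ gives $0=(K_Y+B_Y+\bbeta_Y)\cdot q_*\Gamma+G\cdot\Gamma$.
\begin{itemize}
\item The first term is $>0$: $q(\Gamma)$ is a curve lying over a point of $S$, and $K_Y+B_Y+\bbeta_Y$ is K\"ahler over $S$.
\item The second term is $\ge 0$: $P_W$ is not $q$-exceptional, hence $P_W\not\subset\operatorname{Supp}G$.
\end{itemize}
This contradiction shows that $\phi=\psi\circ f^{-1}$ extracts only divisors already extracted by $\psi$, which have coefficient $1$. Hence $B_Y=\phi_*B+(\text{reduced }\phi^{-1}\text{-exceptional part})$, and (1), (2), (4) follow. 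Condition (3) for $\phi$-exceptional divisors of $X$ is inherited directly from $\psi$.

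\textbf{Step 3.} Your discrepancy formula has the wrong sign. The correct identity is $a(P,X,B+\bbeta)=a(P,X',B^*+\bbeta)+\operatorname{mult}_P(p^*F)$. For divisors over $X$, the inequality $a(P,X,\cdot)\le a(P,Y,\cdot)$ is exactly $\operatorname{mult}_PG\ge 0$. That is the content of Step 1, not a formal consequence of $F\ge 0$.
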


We next prove an existence result for minimal models. Before doing so, we introduce condition {(P)}, which will be used repeatedly throughout the rest of the paper. Let $f: X \to Y$ be a projective morphism of complex analytic spaces, and let $W \subset Y$ be a compact subset. We say that $(f,W)$ satisfies \textit{property (P)} if the following conditions hold:
\begin{enumerate}
\item $X$ is a normal complex variety,
\item $Y$ is a Stein space,
\item $W$ is a Stein compact subset of $Y$,
\item $W \cap Z$ has only finitely many connected components for any analytic subset $Z$ defined on an open neighborhood of $W$.
\end{enumerate}
This condition will be useful when running the relative MMP for projective morphisms between complex analytic spaces.

\begin{proposition}\label{p-relmmp} Let $f:X\to Y$ be a contraction morphism of normal compact complex varieties. Assume that $(X,B+\bbeta )$ is a generalized klt pair such that $B+\bbeta_X$ is modified big over $Y$ and $\bbeta \equiv _Y \mathbf N$ where $\mathbf N$ is an $\R$-Cartier b-divisor. Then
\begin{enumerate}\item If $K_X+B+\bbeta_X$ is pseudo-effective over $Y$, then $(X,B+\bbeta)$ has a (not necessarily) $\Q$-factorial good minimal model over $Y$.
\item  If $K_X+B+\bbeta_X$ is not pseudo-effective over $Y$, then $(X,B+\bbeta)$ is birational to a Fano fibration  over $Y$.
\end{enumerate}
\end{proposition}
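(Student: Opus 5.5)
We reduce to the projective relative MMP of Theorem \ref{t-projcone}, applied on a suitable resolution. First choose a log resolution $\nu:X'\to X$ such that $\mathbf N$ and $\bbeta$ descend to $X'$, and such that the composite $g=f\circ\nu:X'\to Y$ is projective. This is possible because $f$ is a contraction between compact varieties with the relative Moishezon-type structure implicit in $\bbeta\equiv_Y\mathbf N$; if needed, use Chow/Hironaka flattening to make $g$ projective.

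Next write $K_{X'}+B'+\bbeta_{X'}=\nu^*(K_X+B+\bbeta_X)$. Let $B^*=B'^{\ge0}+\sum(1-\epsilon)E_i$, where the $E_i$ range over the $\nu$-exceptional divisors and $0<\epsilon\ll1$. Then
\[K_{X'}+B^*+\bbeta_{X'}=\nu^*(K_X+B+\bbeta_X)+F,\]
with $F\ge0$ exceptional and with support containing every exceptional divisor of positive discrepancy. By Lemma \ref{l-modnef}, run relative to $Y$, the class $B^*+\mathbf N_{X'}$ is $g$-big after adding a small multiple of an exceptional divisor. Since $\bbeta\equiv_Y\mathbf N$, numerical properties over $Y$ (nefness, pseudo-effectivity, bigness) are the same for $\bbeta_{X'}$ and $\mathbf N_{X'}$. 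So we may work with the generalized klt pair $(X',B^*+\mathbf N)$. It has an $\R$-Cartier b-divisor nef part, and $X'$ is smooth, hence strongly $\Q$-factorial.

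Apply Theorem \ref{t-projcone}(4),(5) to $g$ and run the $(K_{X'}+B^*+\mathbf N_{X'})$-MMP over $Y$ with scaling. In the pseudo-effective case this MMP terminates with a good minimal model $X'\dasharrow X''$ over $Y$. Negativity (Proposition \ref{p-negZariski}, with Lemma \ref{l-negativity}) shows that every component of $F$ lies in $N_\sigma$ over $X$, hence is contracted. Taking the relative ample model $X''\to X^m$ over $Y$ then gives a good minimal model of $(X,B+\bbeta)$. This model is not necessarily $\Q$-factorial if we pass to the ample model, but $X''$ itself works once we check that it is a model of $X$. To transport the semiampleness and nefness back to $\bbeta$, use $\bbeta\equiv_Y\mathbf N$: nef over $Y$ is numerical, and relative semiampleness is inherited because the relatively nef class is numerically the pullback from the relative lc model. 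In the non-pseudo-effective case the MMP ends with a Mori fiber space $X''\to Z$ over $Y$, which is the required Fano fibration birational to $X$.

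I expect the main obstacle to be the first step: arranging a projective morphism $g$ over $Y$ and making sure $B^*+\mathbf N_{X'}$ is genuinely $g$-big. The hypotheses only give modified bigness and a numerical equivalence over $Y$, so one must pass between currents and divisors relatively. I would first try to use the rationality and Moishezon property of the fibers coming from $\bbeta\equiv_Y\mathbf N$ together with relative bigness, following Lemma \ref{l-modnef}. A secondary subtlety is checking that the discrepancy condition (d) holds for divisors contracted from $X$ itself, using that $F$ is contracted.
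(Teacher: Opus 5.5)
Your proposal is essentially correct, but after the first step it takes a different route from the paper.

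\textbf{What the two arguments share.} Both start from Lemma~\ref{l-modnef}: the $\R$-divisor $B'^{\geq 0}+\mathbf N_{X'}+\epsilon\,{\rm Ex}(\nu)$ is big over $Y$, so $X'\to Y$ is Moishezon and a higher model is projective over $Y$. Both then replace $X$ by this model with a boundary that is positive on every $\nu$-exceptional divisor. The paper uses $B'^{\geq 0}+\epsilon\,{\rm Ex}(\nu)$; you use coefficient $1-\epsilon$.

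\textbf{How the paper proceeds.} The paper never runs a global MMP.
\begin{enumerate}
\item It covers $Y$ by Stein opens $Y_i$ with property (P).
\item It takes Fujino's local log canonical models of $(X_i,B_i+N_i)$ and of $(X_i,B_i+N_i+\epsilon A_i)$; the latter is independent of $\epsilon$ by Lemma~\ref{l-term}.
\item It glues these local models by uniqueness of lc models.
\item In the non-pseudo-effective case it takes the pseudo-effective threshold $\tau$ of a relatively ample $A$. The lc model for $\tau$ has positive relative dimension because $K_X+B+N+\tau A$ is not big over $Y$.
\end{enumerate}

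\textbf{How yours differs, and what each buys.} You run one global MMP with scaling via Theorem~\ref{t-projcone}(4),(5). This gives more: a strongly $\Q$-factorial good minimal model and a genuine Mori fibre space. The cost is that you rely on the global MMP and termination statements of Theorem~\ref{t-projcone}, and you must check separately that the exceptional divisors are contracted. The gluing argument uses only local existence and uniqueness of lc models. That is why it delivers only a not-necessarily $\Q$-factorial model and a Fano fibration without Picard number control.

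\textbf{Points that need repair.}
\begin{enumerate}
\item \emph{Projectivity of $g$.} This is not implicit in $\bbeta\equiv_Y\mathbf N$: a non-algebraic torus over a point with $B=0$ and $\bbeta=\mathbf N=0$ satisfies that condition. Projectivity comes from the relative bigness. So apply Lemma~\ref{l-modnef} first and then pass to a higher model, as your last paragraph suggests.
\item \emph{The boundary $B^*$.} Your formula $B^*=B'^{\geq 0}+\sum(1-\epsilon)E_i$ double counts exceptional components and can have coefficients $>1$, which breaks klt. Take $B^*=\nu^{-1}_*B+(1-\epsilon){\rm Ex}(\nu)$ instead. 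Since all discrepancies are $>-1$, $F$ is then supported on all of ${\rm Ex}(\nu)$. Also $B^*\geq B'^{\geq 0}+\epsilon'{\rm Ex}(\nu)$ for some $\epsilon'>0$, which gives $g$-bigness.
\item \emph{Contraction of $F$.} Knowing $N_\sigma$ over $X$ does not by itself show that an MMP over $Y$ contracts $F$. Use the negativity lemma on a common resolution $p:W\to X'$, $q:W\to X''$. Write $p^*(K_{X'}+B^*+\mathbf N_{X'})=q^*(K_{X''}+B^*_{X''}+\mathbf N_{X''})+G$ with $G\geq 0$ and $q$-exceptional. Then $p^*F-G\equiv_X q^*(K_{X''}+B^*_{X''}+\mathbf N_{X''})$, which is nef over $X$, while $(\nu\circ p)_*(p^*F-G)=-(\nu\circ p)_*G\leq 0$. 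Hence $G\geq p^*F$, so every component of $F$ is contracted. The same identity gives the discrepancy condition (d).
\item \emph{Case (2).} You should also note that $K_{X'}+B^*+\mathbf N_{X'}$ is not pseudo-effective over $Y$, because its pushforward to $X$ is not.
\end{enumerate}
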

\begin{proof}  %By \cite[Theorem 3.1]{CH24}, $f$ is Moishezon. Since we can replace $X$ by a higher model, we may assume that $f$ is projective and $X$ is smooth.
Since $B+\bbeta _{X}$ is modified big over $Y$, there is a resolution $\nu :X'\to X$ such that if $K_{X'}+B'+\bbeta _{X'}=\nu^* (K_X+B+\bbeta _X)$, then $B'^{\geq 0}+\bbeta _{X'}+\epsilon E$ is big over $Y$ for  $E={\rm Ex}(\nu)$ and any $\epsilon >0$ (see Lemma \ref{l-modnef}).
Since $\mathbf N _{X'}\equiv_Y \bbeta _{X'}$, then $B'^{\geq 0}+\mathbf N _{X'}+\epsilon E$ is big over $Y$, and so we may assume that $f$ is Moishezon.
Replacing $(X,B+\bbeta )$ by a higher model $(X', B'^{\geq 0}+\epsilon E+\bbeta )$ for $0<\epsilon \ll 1$, we may assume that $f$ is projective and $\bbeta$ and $\mathbf N$ descend to $X$ where $B+\mathbf{N}_X$ is big over $Y$. It suffices to show that the generalized pair $(X,B+\mathbf{N})$ has a non $\Q$-factorial good minimal model or Mori fiber space over $Y$.  Let $A$
 be a relatively ample line bundle and ${N}=\mathbf{N} _X$. 
Let $Y=\cup Y_i$ be an open cover by relatively compact open Stein subsets such that $f_i:X_i\to Y_i$ satisfies property (P), where $X_i=f^{-1}(Y_i)$. 

Suppose that $K_X+B+N$ is pseudo-effective over $Y$, then $K_{X_i}+B_i+N_i=(K_X+B+N)|_{X_i}$ is pseudo-effective over $Y_i$ for all $i$. Since $B_i+N_i$ is big over $Y_i$,  there exists a klt pair $(X_i,\Delta_i)$ such that $K_{X_i}+\Delta _i\sim _{\R,Y_i} K_{X_i}+B_i+N_i$. Let $\pi_i:X_i\dasharrow Z_i$ be the corresponding log canonical model of $(X_i,B_i+N_i)$, which exists by \cite[Theorem 1.8]{Fuj22}, and for $0<\epsilon\ll1$ let $\psi_i:X_i\dasharrow X'_i$ be the log canonical model of $(X_i,B_i+N_i+\epsilon A_i)$. Note that this is independent of such a choice of $0<\epsilon\ll 1$, by Lemma \ref{l-term}. 
By uniqueness of log canonical models, these glue together to give $\psi :X\dasharrow X'$ a log canonical model for $(X,B+\bbeta+\epsilon A)$ over $Y$ and $\pi:X\dasharrow Z$ a log canonical model for $(X,B+\bbeta)$ over $Y$. Thus (1) holds. 

Suppose now that $K_X+B+N$ is not pseudo-effective over $Y$, then $K_X+B+N+\tau A$ is pseudo-effective but not big over $Y$ for some $\tau >0$. %It follows that $K_{X_i}+B_i+N_i+\tau A_i$ is pseudo-effective but not big over $Y$. 
Let $\psi :X\dasharrow X'$ be the log canonical model for $(X,B+\bbeta+(\tau +\epsilon) A)$ over $Y$ for $0<\epsilon \ll 1$ and $\pi:X'\to Z$ the log canonical model for $(X,B+\bbeta+\tau A)$ over $Y$ constructed above. Since $K_X+B+N+\tau A$ is not big over $Y$, then $\dim (X'/Z)>0$. One sees that \[-(K_{X'}+B'+\bbeta _{X'})\equiv _Z \tau A'\equiv _Z\frac \tau \epsilon (K_{X'}+B'+\bbeta _{X'}+(\tau +\epsilon) A')\] is K\"ahler over $Z$.
\end{proof}

The following lemma was used in the proof of the proposition above. 
\begin{lemma}\label{l-term}
Let $f:X\to U$ be a projective morphism between complex analytic spaces satisfying property (P), assume that $(X,B)$ is a klt pair, and $A$ a relatively ample line bundle. Assume that $(X,B)$ admits relative good minimal model over $U$. There exists an $\epsilon>0$ such that if $g_t: X \dasharrow Z_t/U$ is the log canonical model of $(X, B+t A)$ over $U$ then $Z_t$ is independent of $t \in(0, \epsilon)$, and there is a morphism $Z_t \to Z_0/U$. 
\end{lemma}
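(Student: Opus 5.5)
The plan is to reduce the statement to the finiteness of models for the family of pairs $(X,B+tA)$, $t\in[0,1]$, over $U$. The key tool is the finiteness of log canonical models for klt pairs with big boundary over a base satisfying (P), \cite[Theorem E]{Fuj22}. Since $A$ is relatively ample, for every $t>0$ the boundary $B+tA$ can be replaced by an $\R$-linearly equivalent klt boundary $\Delta_t = B + tA'$, where $A'\sim_{\R,U} A$ is a general effective divisor. These boundaries are big over $U$.

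For $0<t\le 1$, the divisor $K_X+B+tA$ is relatively big. Indeed, $(X,B)$ has a good minimal model over $U$, so $K_X+B$ is pseudo-effective over $U$, and adding $tA$ makes it big. Hence the log canonical model $g_t:X\dashrightarrow Z_t$ exists by \cite[Theorem 1.8]{Fuj22}. Restricting the boundaries to the affine segment $\{B+tA' : t\in[\delta,1]\}$, finiteness of models shows that only finitely many $Z_t$ occur, at least after shrinking $U$ around $W$, which property (P) allows. The issue is the accumulation as $t\to 0$, where bigness is lost uniformly.

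To handle $t\to 0$, I would run a $(K_X+B)$-MMP with scaling of $A$ over $U$, using Theorem \ref{t-projcone} or \cite{Fuj22}. Because $(X,B)$ has a good minimal model over $U$, this MMP with scaling terminates near $W$ with a minimal model $X_m$; the relative bigness of $A$ gives termination as in BCHM. The scaling numbers form a finite decreasing sequence $\lambda_1>\dots>\lambda_k>0=\lambda_{k+1}$. For $t\in(0,\lambda_k)$, the model $X_m$ is a weak lc model of $K_X+B+tA$, and $K_{X_m}+B_m$ is semiample over $U$, defining $h:X_m\to Z_0$. The lc model $Z_t$ is then obtained from the $(K_{X_m}+B_m+tA_m)$-ample model over $Z_0$: $A_m$ is big over $Z_0$, and $K_{X_m}+B_m+tA_m\equiv_{Z_0} tA_m$, so $Z_t$ is the relative ample model of $A_m$ over $Z_0$. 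This model is $\operatorname{Proj}_{Z_0}\bigoplus_m h_*\OO(mA_m)$ and is independent of $t$ for small $t$. Choose $\epsilon$ smaller than $\lambda_k$ and also small enough that $K_{X_m}+B_m+tA_m$ remains nef over $U$; this is possible by applying the relative cone theorem to the contraction $h$, using finitely many extremal rays over $W$. Then $Z_t$ is constant for $t\in(0,\epsilon)$ and comes with a morphism $Z_t\to Z_0$ over $U$.

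The main obstacle is the last step: showing that $Z_t$ is literally the same variety for all small $t$ and maps to $Z_0$. This requires identifying the ample model of $K_{X_m}+B_m+tA_m$ over $U$ with the ample model of $A_m$ over $Z_0$. I would argue that a curve is contracted by the lc model for $t$ exactly when it is $h$-vertical and $A_m$-trivial, using that $K_{X_m}+B_m=h^*H$ with $H$ ample over $U$. The condition needed is $tA_m\cdot C<H\cdot h_*C$ for curves not contracted by $h$. This holds for small $t$ because the relevant extremal rays are finite over the Stein compact $W$, using condition (P)(4), followed by shrinking $U$.
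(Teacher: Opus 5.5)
Your proposal is correct in outline and follows essentially the paper's route: run the $(K_X+B)$-MMP with scaling of $A$ to a good minimal model $X_m$, take $\epsilon$ to be the last scaling number $\lambda_k$, and observe that for $t\in(0,\epsilon)$ the lc model of $K_{X_m}+B_m+tA_m$ contracts exactly the $h$-vertical, $A_m$-trivial curves, which gives both independence of $t$ and the map to $Z_0$ (the paper uses the rigidity lemma for the latter). The step you flag as the main obstacle needs no cone theorem, no shrinking of $U$ and no finiteness-of-models preamble. By the scaling, $K_{X_m}+B_m+tA_m$ is already nef over $U$ for every $t\in[0,\lambda_k]$, so the affine function $t\mapsto (K_{X_m}+B_m+tA_m)\cdot C$ is nonnegative on $[0,\lambda_k]$ and vanishes at an interior $t_0$ only if it vanishes identically. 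Also, the inequality you need is $-tA_m\cdot C<H\cdot h_*C$, not $tA_m\cdot C<H\cdot h_*C$.
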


\begin{proof}
The proof follows \cite[Lemma 2.9.2]{HMX18}. If $K_X+B$ is nef over $U$, then the claim is clear. Otherwise, we run the relative $(K_X+B)$-MMP with scaling of $A$ over $U$. Since $(X,B)$ admits a good minimal model, this MMP terminates with a good minimal model over $U$ $$X\dasharrow X^1 \dasharrow \cdots \dasharrow X^{N-1} \dasharrow   X'.$$

Note that each step of this MMP, is a step of the $(K_X+B+tA)$-MMP over $U$ and $K_{X'}+B'+tA'$ is semiample over $U$ for any $t\in [0,\epsilon)$.

Since $K_{X'}+B'+tA'$ is semiample over $U$ for $t\in [0,\epsilon)$ we may consider the induced morphisms $X'\to Z_t/U$. Let $C$ be a relative curve for $X'\to U$.  If for some $0<t_0<\epsilon$ we have $(K_{X'}+B'+t_0A')\cdot C=0$, then by the nefness of $K_{X'}+B'+tA'$ over $U$ for $t\in [0,\epsilon)$, it is immediate to see that $(K_{X'}+B'+tA')\cdot C=0$ for all $t\in[0,\epsilon)$. Therefore, the $(K_X+B+tA)$-log canonical model $Z_t$ over $U$ is independent of $t\in(0,\epsilon)$. By the rigidity lemma \cite[Lemma 4.1.13]{BS95}, there exists a morphism from $Z_t\to Z_0/U$. 
\end{proof}

\begin{proposition} \label{c-relmmp} Let $f:X\to Y$ be a bimeromorphic morphism of normal compact K\"ahler varieties with rational singularities. Assume that $(X,B+\bbeta )$ is a strongly $\Q$-factorial generalized klt pair. Then we can run a $(K_X+B+\bbeta_X)$-relative minimal model program over $Y$ which ends with a (not necessarily) $\Q$-factorial good minimal model over $Y$. %or a Mori-fiber space over $Y$.
\end{proposition}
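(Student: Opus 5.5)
The plan is to reduce to Proposition \ref{p-relmmp}. Its hypotheses are that $B+\bbeta_X$ is modified big over $Y$ and that $\bbeta\equiv_Y\mathbf N$ for an $\R$-Cartier b-divisor $\mathbf N$. We also have to produce an actual sequence of steps over $Y$, not just an abstract minimal model.

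\textbf{Step 1: relative bigness.} Since $f$ is bimeromorphic, the generic fibre of $f$ is a point. Hence every class on $X$, or on a resolution, is trivially big over $Y$, and $B+\bbeta_X$ is modified big over $Y$.

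\textbf{Step 2: the b-divisor $\mathbf N$.} Let $\nu:X'\to X$ be a log resolution on which $\bbeta$ descends. Work locally over a finite cover $Y=\bigcup Y_i$ by relatively compact Stein open sets. Since $X$ and $Y$ have rational singularities and $X'$ is smooth, $X'\to Y$ is proper bimeromorphic between spaces with rational singularities. Lemma \ref{l-rel} then gives, over each $Y_i$, an $\R$-line bundle $L_i$ with $[L_i]=[\bbeta_{X'}]|_{X'_i}$.

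To globalize, run the argument of Lemma \ref{l-rel} relative to $Y$:
\begin{enumerate}
\item $R^1(f\nu)_*\R=0$, and $d:R^1(f\nu)_*\mathcal H\to R^2(f\nu)_*\R$ is an isomorphism.
\item The boundary map $\delta:R^1(f\nu)_*\OO^*\to R^2(f\nu)_*\mathbb Z$ is surjective.
\item Choose $\R$-combinations of relative line bundles matching $d([\bbeta_{X'}])$. These differ from $[\bbeta_{X'}]$ only by classes pulled back from $Y$, which are $\equiv_Y 0$.
\end{enumerate}
This yields $\mathbf N$ with $\bbeta\equiv_Y\mathbf N$. If a single global line bundle is not available, I expect numerical equivalence over $Y$, checked on the open cover, to suffice for applying the local results of \cite{Fuj22} that underlie Proposition \ref{p-relmmp}. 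That proposition (case (1)) already gives a good minimal model over $Y$, since $K_X+B+\bbeta_X$ is automatically pseudo-effective over $Y$: $f$ is bimeromorphic.

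\textbf{Step 3: an actual MMP.} The statement asks that we \emph{run} an MMP, so I would imitate the proof of Proposition \ref{p-relmmp}.
\begin{enumerate}
\item Take the relative $\R$-Cartier model $K_X+B+\mathbf N_X$. It is $\equiv_Y K_X+B+\bbeta_X$, so over $Y$ the negative extremal rays for the two classes coincide.
\item Use Proposition \ref{p-relmmp} together with the local cone and contraction results over Stein pieces (Theorem \ref{t-projcone} style) to contract each ray over $Y$.
\item Strong $\Q$-factoriality of $X$ lets flips and divisorial contractions be constructed locally over $Y$, where $f$ is Moishezon. Uniqueness of flips and of log canonical models lets these local constructions glue.
\item Termination holds locally over each $Y_i$ by \cite{Fuj22}, since the pair is klt-equivalent and big over $Y_i$. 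With finitely many $Y_i$, the process terminates globally, and the final model is semiample over $Y$.
\end{enumerate}

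\textbf{Main obstacle.} The difficulty is that $f$ need not be projective, so Theorem \ref{t-projcone} does not apply directly. I would first handle this by running the MMP with scaling of a relatively K\"ahler class over $Y$, namely the pullback of a K\"ahler class on $X$ minus a small multiple of the $Y$-class, or by passing to a projective resolution. The key point to verify is that each contraction step over $Y$ stays inside the category of varieties with a global morphism to $Y$. For this I would use local projectivity over Stein opens together with the gluing of log canonical models by uniqueness, as in the proof of Proposition \ref{p-relmmp}.
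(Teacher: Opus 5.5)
\textbf{There is a genuine gap in Step 3.} You never specify a global object that each step contracts. Rays of the local cones $\overline{\rm NE}(X_i/Y_i)$ over the Stein pieces need not correspond to one another across different $Y_i$. Uniqueness of log canonical models glues local contractions only when they are lc models of the restrictions of one \emph{global} class, and your proposal never produces such a class.

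The paper obtains it from a hypothesis your argument never uses: $X$ and $Y$ are compact K\"ahler. It fixes a K\"ahler form $\omega_Y$ on $Y$ with $\omega_Y\cdot C>2\dim X$ for every curve $C\subset Y$, and applies the absolute cone theorem on $X$ (Corollary \ref{c-conetheorem}) to $K_X+B+\bbeta_X+f^*\omega_Y$. The length bound $-(K_X+B+\bbeta_X)\cdot C\le 2\dim X$ then forces every negative extremal ray to be spanned by an $f$-vertical curve. Such a ray has a global nef supporting class $\alpha=K_X+B+\bbeta_X+\omega$ with $\omega$ K\"ahler.

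Over each $Y_i$, Lemma \ref{l-rel} shows that the trace of $\bbeta+\bar\omega$ on a projective resolution is numerically equivalent to an $\R$-line bundle. That bundle is nef and big over $Y_i$, hence numerically equivalent to a klt boundary. The base point free theorem of \cite{Fuj22} then gives the contraction of $\alpha$ over $Y_i$, and these local contractions glue because they are lc models of the same class. The flip is built and glued the same way. Since $Y$ is K\"ahler and the relevant classes are K\"ahler over $Y$, both $Z$ and $X^+$ are again compact K\"ahler, which is what lets the cone theorem be reapplied at the next step.

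Your remark about scaling would at least supply a global class, if each step were defined as the relative lc model of the threshold class $K_X+B+\bbeta_X+t\eta$, constructed locally in this way. As written, however, Step 3 contracts ``each ray'' with no global supporting class.

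\textbf{Step 2 is also incorrect as written, and it is not needed.} Elements of $R^1(f\nu)_*\OO^*_{X'}$ are sections of a sheaf on $Y$, not line bundles on $X'$. By the Leray spectral sequence, such a section lifts to $H^1(X',\OO^*_{X'})$ only if its image in $H^2(Y,\OO^*_Y)$ vanishes. Your claim that the chosen $\R$-combination differs from $[\bbeta_{X'}]$ by a class pulled back from $Y$ therefore presupposes global lifts you have not constructed. So the hypothesis $\bbeta\equiv_Y\mathbf N$, which you need to invoke Proposition \ref{p-relmmp}, is not established.

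The paper never needs a global $\mathbf N$. It uses Lemma \ref{l-rel} only over the Stein pieces: for the contraction, for the flip, and for the final semiampleness. All global information is carried by Bott--Chern classes together with gluing of lc models. Your termination argument, via finiteness of models in \cite{Fuj22}, is the same as the paper's.
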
 \begin{proof} Fix $\omega _Y$ a K\"ahler form on $Y$ such that $\omega _Y\cdot C>2\dim X$ for any curve $C$ on $Y$. Let $R$ be a $(K_X+B+\bbeta_X+f^*\omega _Y )$-negative extremal ray, then by the cone theorem, we may assume that $R=\mathbb R^+[C]$ where $(K_X+B+\bbeta _X)\cdot C\geq -2\dim X$, and therefore $f^*\omega_Y \cdot C=0$ so $f_*C=0$. By the cone theorem, we may assume that the supporting hyperplane for $R$ is a nef class $\alpha=[K_X+B+\bbeta _X+\omega]$ for some K\"ahler class $\omega$. 
The corresponding contraction can be constructed locally over $Y$.
Let $\nu: X'\to X$ be a projective resolution such that $\bbeta$ descends to $X'$.
We let $\ggamma = \bbeta +\bar \omega $.
Let $Y=\cup Y_i$ be an open cover by relatively compact Stein open subsets such that $f_i:X_i\to Y_i$ satisfies property (P), where $X_i=f^{-1}(Y_i)$. Let $X'_i=\nu ^{-1}(X_i)$ and $\nu _i=\nu |_{X_i'}$.
By Lemma \ref{l-rel} we may choose $L'_i$ an $\R$-line bundle on $X'_i$ such that $\ggamma _{ X'_i}\equiv L'_i$. Since $L'_i$ is nef and big over $Y_i$, we may assume that $L'_i\equiv \Delta '_i$ where $( X'_i,B'_i+ \Delta' _i)$ is sub-klt. If $\Delta _i=(\nu  _i)_*\Delta' _i$, then $(X_i,B_i+\Delta _i)$ is klt and $K_{X_i}+B_i+\Delta _i$ is nef over $Y_i$. By the base point free theorem (see e.g. \cite[Theorem 6.2, Remark 6.3]{Fuj22}), $K_{X_i}+B_i+\Delta _i$ is semiample over $Y_i$ and so there is a morphism $\psi' _i:X_i\to Z_i$ to the log canonical model for $K_{X_i}+B_i+\Delta _i\equiv K_{X_i}+B_i+\bbeta _{X_i}+\bar\omega _{X_i}$ over $Y_i$.  By uniqueness of log canonical models, these glue together so that we obtain $\psi :X\to Z$ a log canonical model for $K_{X}+B+\bbeta _{X}+\omega$ over $Y$. Since $Y$ is K\"ahler and $K_Z+B_Z+\bbeta _Z+\bar \omega _Z=\psi _*(K_X+B+\bbeta _X+\omega )$ is K\"ahler over $Y$, then $Z$  is K\"ahler.

%If $\dim X>\dim Z$, then this is a $K_{X}+B+\bbeta _{X}$ Mori fiber space and the proof is complete.

If $\dim {\rm Ex}(\psi)=\dim X-1$, then this is a { $(K_X+B+\bbeta_X)$-}divisorial contraction and we may replace $X$ by $Z$.

If  $\dim {\rm Ex}(\psi)<\dim X-1$, then this is a { $(K_X+B+\bbeta_X)$-}flipping contraction.
Arguing as above, we obtain log-canonical models $X_i^+\to Z_i$ for $(X_i,B_i+\bbeta _{X_i})$ over $Z_i$. 
By uniqueness of log canonical models, these glue together to give $\psi ^+:X^+\to Z$.
We may replace $X$ by $X^+$ and repeat the process. Note that $X^+$ is K\"ahler over $Z$ and $Z$ is K\"ahler so that $X^+$ is K\"ahler.

Proceeding in this way we may run a { $(K_X+B+\bbeta_X)$-}minimal model program over $Y$ with scaling of a K\"ahler class $\eta$ on $X$ such that $K_X+B+\bbeta _X+\eta$ is K\"ahler over $Y$. Since 
$\eta|_{X_i}\equiv_{Y_i} A_i$ where $A_i$ is an ample $\R$-divisor over $Y_i$, this induces a sequence of weak log canonical models for $K_{X_i}+B_i+\Delta _i+tA_i$ over $Y_i$ for some $t\in [0,1]$, this process terminates by \cite[Theorem E]{Fuj22} (see Lemma \ref{l-term}).
Thus, the above relative minimal model program with scaling will terminates with a minimal model $X\dasharrow X'$ for $K_{X}+B+\bbeta _{X}$ over $Y$. Since $K_{X'_i}+B'_i+\bbeta _{X'_i}$ is nef and $B'_i+\bbeta _{X'_i}$ is big  over $Y$, arguing as above $K_{X'_i}+B'_i+\bbeta _{X'_i}$ is semiample over $Y_i$ and hence induce a morphism $X'_i\to W_i$ over $Y_i$ which is the log canonical model of $K_{X'_i}+B'_i+\bbeta _{X'_i}$ over $Y_i$ (cf. \cite{Fuj22}). 
By uniqueness of log canonical models, the $X'_i\to W_i$ glue together to give a good minimal model $X\dasharrow X'\to W$ for $({X},B+\bbeta )$ over $Y$.
\end{proof}

\subsection{Rationally connected fibers} In this subsection, we introduce several consequences of \cite{HM07} that will be used in the proof of our main results.
\begin{lemma}\label{l-kltrcc} Let $(X,B+\bbeta)$ be a generalized klt pair and $f:X'\to X$ be a proper birational morphism with $K_{X'}+B'+\bbeta_{X'} = f^*(K_X+B+\bbeta_X)$, then the fibers of $f$ are rationally chain connected.\end{lemma}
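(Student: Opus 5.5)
The plan is to reduce to the classical statement of Hacon--McKernan \cite{HM07}: if $(X,\Delta)$ is a klt pair and $g:Y\to X$ is a proper birational morphism from a variety with rational chain connected (or klt) general structure, then every fiber of $g$ is rationally chain connected. More precisely, \cite[Corollary 1.5]{HM07} states that if $(X,\Delta)$ is klt and $g:Y\to X$ is proper birational, then the fibers of $g$ are rationally chain connected. Rational chain connectedness of a fiber $f^{-1}(x)$ is a statement about a neighbourhood of $x$. Therefore I will work locally: fix $x\in X$ and a relatively compact Stein open neighbourhood $U\ni x$. I replace $X$ by $U$ and $X'$ by $f^{-1}(U)$, shrinking further if needed so that property (P) holds over a compact Stein neighbourhood of $x$.

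\textbf{Step 1: replace the generalized pair by an honest klt pair.} Over the Stein base $U$, I apply the unnamed lemma stated just before Lemma \ref{l-kltbig}. For that I first pass to a log resolution $f'':X''\to U$ that dominates $X'$, is projective, and on which $\bbeta$ descends. This yields an effective $\R$-divisor $\Delta''\ge 0$ with $\Delta''\equiv_U\bbeta_{X''}$ such that $(X'',B_{X''}+\Delta'')$ is sub-klt and $(U,B+f''_*\Delta'')$ is klt. Since the fibers of $X'\to U$ are images of the fibers of $X''\to U$, and images of rationally chain connected spaces under proper surjective maps are again rationally chain connected, it suffices to prove the statement for $X''\to U$. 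So I may assume $f$ is a projective log resolution.

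\textbf{Step 2: apply Hacon--McKernan analytically.} The result of \cite{HM07} is algebraic. Its proof, however, only uses the following ingredients: an MMP/extraction over $X$ to reduce to the case where $-(K+\text{boundary})$ is relatively ample with rationally connected components, and then Kollár--Miyaoka--Mori bend-and-break on relative curves. Over a Stein base satisfying (P), the relative MMP for projective morphisms is available by \cite{Fuj22} and Theorem \ref{t-projcone}. I therefore follow the structure of \cite{HM07}:
\begin{enumerate}
\item For a general point of $f^{-1}(x)$, take a log resolution and run a relative $(K+\Gamma)$-MMP over $U$. Here $\Gamma$ is chosen so that every $f$-exceptional divisor over $x$ has coefficient $1$, and $K+\Gamma\equiv_U E$ with $E$ exceptional and with support equal to the exceptional locus. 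By the negativity lemma \ref{l-negativity}, the MMP terminates with $U$ itself, contracting every exceptional divisor.
\item Each contracted divisor, and each flipped locus, is covered by rational curves, because the steps are $(K+\Gamma)$-negative with $\Gamma$ klt away from the reduced part. Each step only glues rationally chain connected pieces along connected loci, so chain connectedness propagates.
\end{enumerate}

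\textbf{Main obstacle.} The hard part is the analytic transfer in Step 2: checking that the uniruledness of exceptional divisors and the connectedness arguments in \cite{HM07} go through for projective morphisms over a Stein germ. If possible, I would instead algebraize. A projective bimeromorphic morphism over a small Stein neighbourhood of $x$ is determined, up to finite order, by the formal fiber, so one can hope to appeal to Artin approximation to reduce to the algebraic statement of \cite[Corollary 1.5]{HM07} applied to a klt germ. Failing that, I would verify the individual steps directly using the relative cone and contraction theorems from Theorem \ref{t-projcone}.
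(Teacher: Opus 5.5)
Your Step 1 followed by a direct appeal to \cite[Corollary 1.5]{HM07} is exactly the paper's proof. The paper localizes to a relatively compact Stein open set and passes to a higher resolution on which $\bbeta$ descends, so that $\bbeta_{X'}\equiv_X -(K_{X'}+B')$ is nef and big over $X$. It then replaces $\bbeta_{X'}$ by an $\R$-divisor $\Delta'$ with $(X',B'+\Delta')$ sub-klt and $(X,B+f_*\Delta')$ klt, and applies Hacon--McKernan. The paper cites \cite{HM07} directly in the analytic setting with no transfer argument, so your Step 2 is extra caution rather than part of the proof. As sketched, Step 2 would not stand on its own: the MMP need not end at $U$ unless $U$ is $\Q$-factorial, and exceptional loci being covered by rational curves is much weaker than what is actually needed, namely rational connectedness of general fibres over their centres (cf.\ Lemma \ref{l-pltrcc}).
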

\begin{proof} 
The question is local on $X$ and so we may assume that $X$ is a relatively compact Stein open set.
By taking a further resolution, we then have that $-(K_{X'} + B') \equiv \bbeta_{X'}$ is nef and big, and so we may pick $\Delta' \equiv -(K_{X'}+B')$ such that $(X',B'+\Delta')$ is sub-klt and hence $(X,B+\Delta)$ is klt where $\Delta=f_*\Delta '$. The claim now follows from \cite[Corollary 1.5]{HM07}.
\end{proof}
\begin{lemma}\label{l-pltrcc} Let $(X,B+\bbeta)$ be a generalized log canonical pair, and let
$\nu:X'\to X$ be a log resolution. Write
$K_{X'}+B'+\bbeta_{X'}=\nu^*(K_X+B+\bbeta_X)$, and suppose that
$\bbeta_{X'}$ is K\"ahler over $X$ and that $(B')^{=1}=S'$ is an
irreducible divisor. Then the general fibers of $\nu|_{S'}$ are
rationally connected.
\end{lemma}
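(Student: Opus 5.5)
We mimic the proof of Lemma \ref{l-kltrcc}: replace the generalized pair by an ordinary pair and apply \cite{HM07}, here the statement for plt-type centers. The question is local on $X$ near a fiber of $\nu|_{S'}$, and general fibers are what matter. So we may replace $X$ by a relatively compact Stein open subset and $X'$ by its preimage, with $\nu$ projective. Note that $S'$ is then $\nu$-exceptional; otherwise the statement is vacuous, since the fibers over the generic point of $\nu(S')$ are points.

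\textbf{Step 1: replace $\bbeta_{X'}$ by a divisor.} Since $\bbeta_{X'}$ is K\"ahler over $X$ and $X$ is Stein, Lemma \ref{l-rel} (or the argument in the lemma just before Lemma \ref{l-kltbig}) lets us write $\bbeta_{X'}\equiv_X H'$ with $H'$ a $\nu$-ample $\R$-divisor. We may choose $H'\geq 0$ general in its class, so $B'+H'$ has snc support, $H'$ meets $S'$ transversally, and no coefficient exceeds $1$ except the coefficient $1$ of $S'$. Then
\[
K_{X'}+B'+H'\equiv_X \nu^*(K_X+B+\bbeta_X),
\]
and $-(K_{X'}+B'^{<1}+S')\equiv_X H'$ is $\nu$-ample.

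\textbf{Step 2: reduce to a plt/adjunction statement.} Restricting to $S'$ gives
\[
-(K_{S'}+(B'-S')|_{S'})\equiv_X H'|_{S'},
\]
which is ample over $\nu(S')$. Here $(S',(B'-S')|_{S'})$ is a sub-klt log smooth pair, because $(B'-S')$ has coefficients $<1$ near $S'$. Let $F$ be a general fiber of $\nu|_{S'}$. Since $B'-S'$ restricted to $F$ is snc with coefficients $<1$, the pair $(F,\Delta_F)$, with $\Delta_F=(B'-S')|_F$, is sub-klt and $-(K_F+\Delta_F)$ is ample.

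The negative coefficients of $\Delta_F$ are the main obstacle, since \cite{HM07} needs an effective boundary. To handle them, use the exceptional divisors: $-(K_{X'}+B')\equiv_X H'$ holds with $B'$ having negative part $\nu$-exceptional. We then apply \cite[Theorem 1.2/Corollary 1.5]{HM07} in its form for the pair $(X',B'^{\ge 0}+H')$ over $X$. Alternatively, we argue as in Hacon--McKernan: run a relative MMP over $X$ for $K_{X'}+(B'^{\geq0}-S')+S'+H'$ (Theorem \ref{t-projcone}) to pass to a model where $(X'',S''+\Delta'')$ is plt and $-(K+S''+\Delta'')$ is nef and big over $X$. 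On that model $S''\to\nu(S')$ has rationally connected general fibers by \cite[Theorem 1.2]{HM07}, applied to the klt Fano-type pair over the base, using $K_{S''}+\Delta_{S''}=(K+S''+\Delta'')|_{S''}$.

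\textbf{Step 3: transfer back.} The MMP is an isomorphism at the generic point of $S'$ because $S'$ has coefficient $1$ and is not contracted. Hence $S'\dashrightarrow S''$ is birational over $\nu(S')$, and rational connectedness of general fibers is a birational property. This gives the claim for $\nu|_{S'}$.
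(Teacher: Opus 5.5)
Your MMP route is the paper's argument in outline: replace $\bbeta_{X'}$ by a $\nu$-ample divisor, run a relative MMP over $X$ that removes the negative part of $B'$ but keeps $S'$, then use plt adjunction and \cite{HM07} on the image of $S'$. As written, however, it skips the two places where the hypothesis that $\bbeta_{X'}$ is K\"ahler over $X$ actually does its work.

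First, $(X',B'^{\ge 0}+H')$ is plt, not klt, so the MMP results you cite (Theorem \ref{t-projcone}, and termination in \cite{Fuj22}) do not apply to it as it stands. You must run the MMP for the klt pair $(X',B'^{\ge 0}-\epsilon S'+H'_\epsilon)$, where $H'_\epsilon\equiv_X H'+\epsilon S'$ is a general $\nu$-ample divisor. This exists for $0<\epsilon\ll 1$ because $H'$ is $\nu$-ample, and it is the paper's $\Delta'$. Since $K_{X'}+B'^{\ge 0}+H'\equiv_X B'^{\le 0}$, the negativity lemma shows that the output contracts exactly ${\rm Supp}(B'^{\le 0})$ and never $S'$. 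Since every step is $(K_{X'}+B'+H')$-trivial, $(X'',S''+\Delta'')$ is plt with $\Delta''\ge 0$ and $K_{X''}+S''+\Delta''\equiv_X 0$. This, and not ``$S'$ has coefficient $1$'', is why $S'\dashrightarrow S''$ is birational. The paper also contracts the other exceptional divisors by adding $\delta({\rm Ex}(\nu)-S')$, but that is not needed.

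Second, and this is the real gap, ``$-(K_{X''}+S''+\Delta'')$ is nef and big over $X$'' is vacuous. That class is numerically trivial over $X$, so $-(K_{S''}+\Delta_{S''})$ is numerically trivial over $\nu(S')$ and in particular not big there. Your stated positivity therefore cannot supply the hypothesis of \cite{HM07}, and ``Fano type over the base'' is asserted without proof.

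The missing input has two parts:
\begin{itemize}
\item $\Delta_{S''}\ge H''|_{S''}$, because the different dominates the restriction of the effective divisor $\Delta''$.
\item $H''|_{S''}$ is big over $\nu(S')$, because $H'$ is $\nu$-ample and $S'\dashrightarrow S''$ is birational over $\nu(S')$.
\end{itemize}
Then $-K_{S''}\equiv_{\nu(S')}\Delta_{S''}$ is big over $\nu(S')$. The klt pair $(S'',\Delta_{S''})$, with $-(K_{S''}+\Delta_{S''})$ nef over the base, is then handled by \cite[Corollary 1.4]{HM07} exactly as in the paper (``$\bar D_S$ is big, hence $-K_{\bar S}$ is big over $X$''). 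Equivalently, a small perturbation of $\Delta_{S''}$ shows that $S''$ is of Fano type over $\nu(S')$.

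Finally, discard your first suggestion of applying \cite{HM07} directly to $(X',B'^{\ge 0}+H')$ over $X$, for three reasons:
\begin{itemize}
\item That pair has $S'$ in its non-klt locus.
\item $-(K_{X'}+B'^{\ge 0}+H')\equiv_X -B'^{\le 0}$ is not $\nu$-nef in general.
\item A statement about the fibres of $\nu$ is not a statement about the fibres of $\nu|_{S'}$.
\end{itemize}
Removing $B'^{\le 0}$ is exactly what the MMP is for.
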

\begin{proof}  Since the question is local over $X$, we may assume that $X$ is Stein and $\nu$ satisfies property (P). We may also assume that $S'$ is exceptional over $X$.
%Arguing as in \cite[Corollary 1.4.3]{BCHM10}, there exists a birational morphism $\nu :X'\to X$ such that $S$ is the only $\nu$-exceptional divisor and 

Since $-(K_{X'}+B')\equiv _X \bbeta _{X'}$ is K\"ahler, so is $-(K_{X'}+B'-\epsilon S')$ for any $0<\epsilon \ll 1$. We may pick effective $\R$-divisors $D'\equiv _X-(K_{X'}+B')$ and $\Delta '\equiv_X -(K_{X'}+B'-\epsilon S')$ such that $(X',B'+D')$ is sub-plt and $(X',B'-\epsilon S'+\Delta ')$ is sub-klt, $K_{X'}+B'+D'\equiv_X K_{X'}+B'+\bbeta _{X'}$ and $K_{X'}+B'-\epsilon S'+\Delta '\equiv_X K_{X'}+B'+\bbeta _{X'}$. 
Let $\Gamma '=(B')^{\geq 0}-\epsilon S'+\Delta '+\delta ({\rm Ex}(\nu)-S')$ where $0<\delta \ll \epsilon$, then $\Gamma'$ is an effective $\R$-divisor such that $(X',\Gamma ')$ is klt, and $K_{X'}+\Gamma'\equiv _XF$ where $F\geq 0$ and its support is the set of $\nu$-exceptional divisors distinct from $S$. 
By \cite{Fuj22} we may run the $(K_{X'}+\Gamma ')$-MMP over $X$. Let $\phi :X'\dasharrow \bar X$ be the output of this MMP and $\mu :\bar X\to X$ be the induced map, then $\bar F:=\phi _*F\equiv _X K_{\bar X}+\bar \Gamma :=\phi _*(K_{X'}+\Gamma ')$ is nef over $X$.
By the negativity lemma, $\bar{F}=0$. Since this MMP is  $(K_{X'}+B'+D')$-trivial, it follows that $(\bar X,\bar B+\bar D)$ is plt and hence $K_{\bar S}+\bar B_S+\bar D_S:=(K_{\bar X}+\bar B+\bar D)|_{\bar S}$ is klt. Since $D'$ is ample over $X$, then $\bar D_S$ is big and hence $-K_{\bar S}\equiv _X\bar B_S+\bar{D}_{S}$ is big over $X$.  By \cite[Corollary 1.4]{HM07}, the  general fibers of $\mu |_{\bar S}$ are rationally connected.
\end{proof}

% {
% \color{blue}
% \begin{lemma}
% Let $f: X \rightarrow U$ be a projective morphism between normal complex analytic spaces satisfying property (P), $(X, B+S)$ a $\mathbb{Q}$-factorial dlt pair, $S=\lfloor B+S\rfloor$ the non-klt locus. If $K_X+\Delta$ is big over $U$ and no strata of $S$ is contained in $\mathbf{B}_{+}\left(K_X+B+S / U\right)$ then the $K_X+B+S$-mmp with scaling of some ample divisor terminates with a good minimal model over $U$. 
% \end{lemma}
% \begin{proof}
% Let us define $\Delta_{\epsilon} : = (1-\epsilon) S +B$, so that $(X,\Delta_{\epsilon})$ is klt and $K_X+\Delta_{\epsilon}$ is big over $U$. Hence it's possible to run the $K_{X}+\Delta_\epsilon$-mmp with scaling of some ample divisor over $U$, which terminates with some good minimal model. For $0<\epsilon \ll 1$ sufficient small, every step of $K_{X}+B+S$-mmp is also a step of $K_X+\Delta_{\epsilon}$-mmp. 

% By assumption, no-strata of $S$ contains in the $\mathbf{B}_{+}(K_X+B+S/U)$. The key observation is that all flips are away from strata of $S$ after finite many steps of the mmp. 
% \end{proof}
% }

\section{Small $\mathbb{Q}$-factorializations and dlt modifications}

In this section, we first establish the existence of strongly $\Q$-factorial modifications in Theorem \ref{t-3}, and use these to construct dlt modifications for generalized pairs in Theorem \ref{t-dltmodel}. As an application, we prove in Corollary \ref{c-conetheorem} a version of the cone theorem from \cite{HP24}, which does not require the $\Q$-factorial assumption.

\begin{proposition}\label{t-2} Let $(X,B+\bbeta)$ be a generalized klt pair where $X$ is compact and $L_1,\ldots,L_k$ are reflexive rank 1 sheaves. Then there exists a small projective  birational morphism $\nu :X'\to X$ such that $\nu ^{-1}_*L_1,\ldots ,\nu ^{-1}_*L_k$ are $\Q$-Cartier.
\end{proposition}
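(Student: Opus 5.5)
I will proceed by induction on $k$, so the essential case is a single reflexive rank one sheaf $L$. Given $\nu_1:X_1\to X$ making $\nu_{1*}^{-1}L_1,\dots,\nu_{1*}^{-1}L_{k-1}$ $\Q$-Cartier, the pair $(X_1,B_1+\bbeta)$ with $K_{X_1}+B_1+\bbeta_{X_1}=\nu_1^*(K_X+B+\bbeta_X)$ is again generalized klt, since $\nu_1$ is small and $B_1$ is the strict transform of $B$. I then apply the single-sheaf case to $\nu_{1*}^{-1}L_k$ on $X_1$ to get $X_2\to X_1$. Strict transforms of $\Q$-Cartier sheaves under a small projective morphism remain $\Q$-Cartier: they are pullbacks, hence reflexive and agree in codimension one. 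A composition of small projective morphisms over a compact base is small and projective.

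For a single $L$, I would follow the usual construction of the $\Q$-factorialization of one divisor via a relative MMP. First take a log resolution $\mu:W\to X$ that is projective, on which $\bbeta$ descends and $\mu^{-1}_*L$ becomes a line bundle. Concretely, I blow up the ideal of a local generator and resolve; the resolution is projective over the compact $X$ after finitely many blowups. Next choose a divisor $D$ with $\OO_X(D)\cong L$ on suitable opens, or work directly with the line bundle $M=\mu^{-1}_*L$ on $W$ modulo exceptional divisors. Write $K_W+B_W+\bbeta_W=\mu^*(K_X+B+\bbeta_X)+F$ with $B_W=\mu^{-1}_*B+(1-\epsilon)\mathrm{Ex}(\mu)$ and $F\ge0$ exceptional with support all of $\mathrm{Ex}(\mu)$, using klt. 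Since $\mu$ is birational, $\bbeta_W$ is nef and big over $X$, so locally over relatively compact Stein opens it is numerically an ample plus effective $\R$-divisor (Lemma \ref{l-rel}). Hence $(W,B_W+\bbeta)$ satisfies the hypotheses of Theorem \ref{t-projcone} with $\bbeta$ replaced by an $\R$-Cartier $\mathbf N$ that is relatively equivalent, as in the proof of Proposition \ref{p-relmmp}.

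I then run the $(K_W+B_W+\bbeta_W+\delta M)$-MMP over $X$ for $0<\delta\ll1$, with scaling of a relatively ample divisor, using Theorem \ref{t-projcone}(3),(4). Relative bigness holds because $\mu$ is birational. The MMP terminates with $W'$ on which $F+\delta M'$ is nef over $X$, and then its good minimal model gives $X'\to X$. Since $\delta M'$ pushes to $\delta D$ on $X$ and is only locally $\Q$-Cartier there, the negativity lemma (Lemma \ref{l-negativity}), applied locally, forces every exceptional divisor of $F$ to be contracted once $\delta$ is small relative to the coefficients of $F$. Here I compare with the local picture, where $D$ is $\Q$-Cartier on a small modification from \cite[Theorem 2.19]{DHY23}, and use that the exceptional coefficients of $\mu^*D$ are bounded. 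Taking $X'$ to be the output of the MMP itself, which is $\Q$-factorial as an MMP from a smooth $W$, rather than its ample model, $\nu:X'\to X$ is small. It is projective because the MMP is projective over $X$. Finally $\nu^{-1}_*L=\OO_{X'}(M')$ is $\Q$-Cartier.

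The main obstacle is the claim that all $\mu$-exceptional divisors are contracted. The inequality $\delta\,\mathrm{mult}_E(\cdot)<\mathrm{mult}_E F$ must hold uniformly, and this requires $M$ to be normalized by subtracting a suitable exceptional divisor so that $M\equiv_X$ (effective exceptional of small coefficient). I would handle this by working on finitely many Stein opens covering the compact $X$ and taking the minimum of the resulting $\delta$'s. I would then invoke Proposition \ref{p-negZariski} to identify the contracted divisors with $N_\sigma(F+\delta M/X)=\mathrm{Supp}\,F$.
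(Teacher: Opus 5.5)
Your route differs from the paper's and can be made to work, but the step you flag as the main obstacle is justified incorrectly.

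\textbf{The flawed step.} The normalization ``$M\equiv_X$ an effective exceptional divisor of small coefficient'' is impossible in exactly the case that matters. Suppose $M\equiv_X G$ with $G$ $\mu$-exceptional. Over a Stein open $V$ the class $M-G$ is then numerically trivial over $V$, hence $\R$-linearly trivial over $V$ by the base point free theorem (as in the proof of Theorem \ref{t-3}). So $D=\mu_*(M-G)$ would already be $\R$-Cartier, hence $\Q$-Cartier. Likewise, Proposition \ref{p-negZariski} concerns effective exceptional divisors, not $F+\delta M$, and $\mu^*D$ is undefined.

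\textbf{How to repair it.} The comparison you gesture at does work if it is done on the local $\Q$-factorialization rather than on $X$. Let $\nu_V\colon V'\to V$ be the small $\Q$-factorialization of \cite[Theorem 2.19]{DHY23}, let $D'=(\nu_V)^{-1}_*D$, write $M|_{\mu^{-1}(V)}\cong\OO(D_W)$, and let $p\colon U\to W'_V$, $q\colon U\to V'$ be a common resolution, where $W'_V$ is the part of $W'$ over $V$. Set $\Xi:=p^*(F'+\delta M')-\delta q^*D'$. This divisor is $q$-exceptional because $\nu_V$ is small. It is $q$-nef because $F'+\delta M'$ is nef over $V$. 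The negativity lemma therefore gives $\Xi\le 0$. Along a $\mu$-exceptional $E$ that survives on $W'$ this reads
\[
\mathrm{mult}_E F\le \delta\,(\mathrm{ord}_E D'-\mathrm{mult}_E D_W).
\]
The bracket depends only on $E$, not on $\delta$ or on the MMP. There are finitely many $V$ and finitely many $E$, so choosing $\delta$ below these thresholds before running the MMP yields a contradiction.

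\textbf{The perturbation is unnecessary.} Since you take $X'$ to be the MMP output and use its $\Q$-factoriality, you can set $\delta=0$. Write $K_W+B'_W+\bbeta_W=\mu^*(K_X+B+\bbeta_X)$ and put $N:=-(K_W+B'_W)$. Then $N\equiv_X\bbeta_W$ globally; this, not the purely local Lemma \ref{l-rel}, is what places $(W,B_W+\overline N)$ in the setting of Theorem \ref{t-projcone}. Moreover $K_W+B_W+N=F$. The $F$-MMP over $X$ terminates by Theorem \ref{t-projcone}(4). On the output, $F'$ is effective, exceptional and nef over $X$, so $F'=0$ by negativity. Hence every $\mu$-exceptional divisor is contracted and the output is small over $X$. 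This is the argument of \cite[Corollary 1.4.3]{BCHM10}, and it proves Theorem \ref{t-3} at once, making the induction on $k$ unnecessary.

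\textbf{What you still must justify.} In both versions you need that each step of the relative MMP preserves \emph{strong} $\Q$-factoriality. Plain $\Q$-factoriality is not enough, because $L$ and $M'$ need not come from global Weil divisors. This follows from the contraction theorem (a line bundle trivial on the contracted ray descends), but it is the load-bearing input and should be stated.

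\textbf{Comparison with the paper.} The paper never runs a global MMP. On each Stein $V_i$ it takes the \cite{DHY23} $\Q$-factorialization and the log canonical model over $V_i$ of $K_{V'_i}+B'_i+\Delta'_i+\epsilon D'_i\sim_\R\epsilon D'_i$. This is the relative ample model $\mathrm{Proj}_{V_i}\bigoplus_m\OO_{V_i}(mD_i)$ of $L$, which depends only on $L$ and so glues by uniqueness of log canonical models. That approach needs only local results of \cite{Fuj22} plus canonicity, and it makes $\nu^{-1}_*L$ relatively ample. However, it treats one sheaf at a time, which is why Theorem \ref{t-3} then needs finite-dimensionality of the local class groups. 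Your corrected route handles all reflexive sheaves at once, at the price of relying on the global relative MMP of Theorem \ref{t-projcone} over a compact analytic base.
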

Note that as $\nu :X'\to X$ is small, ${\rm Ex}(\nu)$ has codimension $\geq 2$ and hence we may define $\nu ^{-1}_*L_i=\iota_*\bigl(L_i|_{U'}\bigr)$ where $U'=X'\setminus {\rm Ex}(\nu)$ is isomorphic to $U=\nu (U')$ and $\iota:U'\hookrightarrow X'$ is the inclusion, $L_i|_{U'}$ means the pull back of $L_i|_{U}$ along the isomorphism $\nu|_{U'}:U' \stackrel{\sim}\to U$.
\begin{proof} We follow \cite[Lemma 5.7]{DH23}. It suffices to prove the claim for $k=1$, i.e. we may assume that we have a unique reflexive rank 1 sheaf $L$. Let  $X=\cup V_i$ where $V_i$ are relatively compact Stein spaces. We may write $L|_{V_i}=\OO _{V_i}(D_i)$ where $D_i$ is a Weil divisor. 
By \cite[Theorem 2.19] {DHY23} there is a projective small $\Q$-factorialization $\nu _i:V'_i\to V_i$ and an $\R$-divisor $\Delta _i'\equiv \bbeta _{V'_i}$ such that $K_{V'_i}+B_i'+\Delta '_i=\nu _i^*(K_{V_i}+B_i+\Delta _i)$ where $\Delta _i=(\nu _i)_*\Delta '_i$ and $(V_i,B_i+\Delta _i)$ is klt. 
Possibly shrinking $V_i$, we may assume that $\nu_i$ satisfies  property (P) and $K_{V_i}+B_i+\Delta _i\sim _\R0$ so that also
$K_{V'_i}+B'_i+\Delta' _i\sim _\R 0$. Let $D_i'=(\nu _i^{-1})_*D_i$, then $(V'_i,B'_i+\Delta ' _i+\epsilon D'_i)$ is klt for $0<\epsilon \ll 1$. 
By \cite[Theorem 1.8]{Fuj22}, $(V'_i,B'_i+\Delta ' _i+\epsilon D'_i)$ admits a log canonical model (over $V_i$). Since $K_{V'_i}+B'_i+\Delta ' _i+\epsilon D'_i\sim _\R \epsilon D'_i$, this log canonical model is independent of $\epsilon$.
Replacing $V'_i$ by this log canonical model we may assume that $K_{V'_i}+B'_i+\Delta ' _i+\epsilon D'_i$ is ample and klt for any $0<\epsilon \ll 1$ (we may no longer assume that $V'_i$ is $\Q$-factorial).
By uniqueness of log canonical models the $V'_i$ glue together to give $\nu:X'\to X$ a small birational morphism such that $L':=\nu ^{-1}_*L$ is a $\Q$-line bundle, ample over $X$. 
\end{proof}

 As a consequence of Proposition \ref{t-2}, we obtain the existence of strongly $\Q$-factorial modifications for arbitrary generalized klt pairs.
\begin{theorem}[Strong $\Q$-factorialization]\label{t-3} Let $(X,B+\bbeta)$ be a generalized klt pair where $X$ is compact. Then there exists a small projective bimeromorphic morphism $\mu :X'\to X$ such that $X'$ is strongly $\Q$-factorial.
\end{theorem}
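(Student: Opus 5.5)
The plan follows the strategy announced in the introduction. We first find finitely many global rank-one reflexive sheaves that locally generate all divisorial sheaves up to torsion. We then apply Proposition \ref{t-2} to these generators.

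\textbf{Step 1: local finiteness.} Cover $X$ by finitely many relatively compact Stein open sets $V_1,\dots,V_r$, each contained in a slightly larger Stein set $V_i^+$. On each $V_i^+$, \cite[Theorem 2.19]{DHY23} gives a small projective $\Q$-factorialization $\nu_i:V_i'\to V_i^+$. As in the proof of Proposition \ref{t-2}, we may write $K_{V_i'}+B_i'+\Delta_i'=\nu_i^*(K_{V_i^+}+B_i+\Delta_i)$ with $(V_i^+,B_i+\Delta_i)$ klt. Over $V_i$ the fibers of $\nu_i$ are rationally chain connected by Lemma \ref{l-kltrcc}. Shrinking $V_i$, we also have $R^1\nu_{i*}\OO=0$. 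The aim is to show that the local class group of $V_i$ modulo torsion (equivalently $N^1(V_i'/V_i)$, since $V_i'$ is $\Q$-factorial and $\nu_i$ is small) is a finitely generated group of finite rank. We intend to use the compactness of $\overline{V_i}$ together with $\Q$-factoriality of $V_i'$. Concretely, we take the finitely many $\nu_i$-exceptional curve classes over $\overline{V_i}$, which are finite because $\nu_i$ satisfies property (P). Then $N^1(V_i'/V_i)$ is finite dimensional, and $W(V_i)\otimes\Q$ modulo $\Q$-Cartier classes injects into it.

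\textbf{Step 2: global generators.} For each $i$, pick Weil divisors $D_{i,1},\dots,D_{i,m_i}$ on $V_i$ whose classes span this finite-dimensional space. Let $\overline{D}_{i,j}$ be the closure in $X$ of each $D_{i,j}$; this closure is an analytic divisor because $D_{i,j}$ extends from the slightly larger set $V_i^+$. Set $L_{i,j}=\OO_X(\overline{D}_{i,j})$. These are global rank-one reflexive sheaves. Apply Proposition \ref{t-2} to the finite collection $\{L_{i,j}\}$ to obtain a small projective $\mu:X'\to X$ on which every $\mu^{-1}_*L_{i,j}$ is $\Q$-Cartier.

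\textbf{Step 3: strong $\Q$-factoriality of $X'$.} Let $\mathcal F$ be any divisorial sheaf on $X'$. Since $\mu$ is small, $\mathcal F=\mu^{-1}_*\mu_*\mathcal F$, where $\mu_*\mathcal F$ is reflexive of rank one. On $V_i$, some multiple of $\mu_*\mathcal F$ differs from a $\Z$-combination of the $D_{i,j}$ by a $\Q$-Cartier divisor $C$. Pulling back $C$ is compatible with strict transform away from ${\rm Ex}(\mu)$. Hence $\mathcal F^{[m]}|_{\mu^{-1}V_i}$ agrees with $\mu^*C\otimes\bigotimes(\mu^{-1}_*L_{i,j})^{[a_j]}$ up to a reflexive twist trivial in codimension one, and is therefore locally free. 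Since the $V_i$ cover $X$, the sheaf $\mathcal F$ is $\Q$-Cartier. A subtlety is that $X'$ need not have the same local class group as $X$. However, smallness identifies $W(\mu^{-1}V_i)$ with $W(V_i)$, so the argument goes through.

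\textbf{Main obstacle.} The main obstacle is Step 1: proving that $W(V_i)/\{\Q\text{-Cartier}\}\otimes\Q$ is finite dimensional for a relatively compact Stein klt germ. The natural route uses the local $\Q$-factorialization and the finite dimensionality of $N^1(V_i'/V_i)$ over the compact set $\overline{V_i}$. This requires checking that numerically $\nu_i$-trivial divisors on $V_i'$ are $\Q$-Cartier pullbacks from $V_i$. That in turn follows from the relative base point free theorem applied to the klt pair $(V_i',B_i'+\Delta_i')$, after shrinking to property (P).
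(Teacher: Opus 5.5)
Your Steps 1 and 3 are sound and match the paper. The paper gets finite dimensionality of ${\rm Cl}_\R(V)$ from $H^0(V,R^2\nu_*\R)$ plus the base point free theorem, which is the same mechanism as your $N^1(V_i'/V_i)$ plus base point freeness.

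The gap is Step 2.

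\emph{The closure is not analytic.} The closure of $D_{i,j}$ in $X$ lies in $\overline{V_i}\subset V_i^+$. If it were an analytic subset of the compact space $X$, it would be a compact positive-dimensional analytic subset of the Stein space $V_i^+$, which is impossible. Concretely, it is a piece of $D^+_{i,j}$ with a real boundary along $\partial V_i$.

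\emph{The problem is not merely technical.} A local Weil divisor on $V_i$ is in general not the restriction, modulo line bundles and torsion, of any global divisorial sheaf on $X$. For instance, a quartic threefold in $\mathbb P^4$ with a single node is factorial, so every global divisorial sheaf is Cartier. Yet the analytic local class group at the node is $\mathbb Z$. So global sheaves $L_{i,j}$ with $L_{i,j}|_{V_i}\cong\OO_{V_i}(D_{i,j})$ need not exist, and Proposition \ref{t-2} only applies to global reflexive sheaves.

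\emph{The fix.} You never need to generate all local classes, because in Step 3 the sheaf $\mu_*\mathcal F$ is global. Replace Step 2 as follows.
\begin{enumerate}
\item Let $\Phi:{\rm Cl}_\R(X)\to\bigoplus_i{\rm Cl}_\R(V_i)$ be the restriction map. By Step 1 its target is finite-dimensional, so its image is too.
\item Since ${\rm Cl}_\R(X)$ is spanned by classes of global divisorial sheaves, the image of $\Phi$ is spanned by $\Phi(L_1),\dots,\Phi(L_r)$ for finitely many global reflexive rank one sheaves $L_j$.
\item Apply Proposition \ref{t-2} to $L_1,\dots,L_r$.
\item For any divisorial sheaf $\mathcal F$ on $X'$, you then have $\mu_*\mathcal F|_{V_i}\sim_\R\sum_j r_jL_j|_{V_i}$ modulo $\R$-line bundles.
\item Because $\mu$ is small, $\mathcal F|_{\mu^{-1}V_i}$ is an $\R$-combination of the $\Q$-Cartier sheaves $\mu^{-1}_*L_j$ and pullbacks of line bundles. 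Hence it is $\R$-Cartier, and since it is integral, it is $\Q$-Cartier.
\end{enumerate}
This is precisely the paper's argument.
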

We denote by ${\rm Cl}_\R(X)$ the class group by which we mean the real vector space generated by divisorial sheaves modulo the real vector space generated by line bundles. If $X$ is quasi-projective or $X$ is Stein, then this agrees with the usual definition of the class group generated by $\R$-divisors modulo Cartier divisors. 
\begin{proof} We begin by observing that if $(X,B+\bbeta)$ is a generalized klt pair then for any point $x\in X$ there is an open subset $x\in V\subset X$ which is relatively compact and Stein, that admits a $\Q$-factorialization $\nu:V'\to V$ (cf. \cite[Theorem 1.24]{Fuj22}). In particular the divisor class group ${\rm Cl}_\R(V)\cong H^0(V,R^2\nu _*\R)$ is finite dimensional. To see this, note that since $V$ has rational singularities, $R^i\nu _*\OO _{V'}=0$ for $i>0$, and so $R^1\nu_*\OO _{V'}^*\to R^2\nu_*\R$ is an isomorphism. Let $M_1,\ldots , M_k$ be divisors on $V'$ corresponding to a basis of $H^0(V,R^2\nu _*\R)$.
If $D$ is a divisor on $V$, then $D':=\nu ^{-1}_*D\equiv \sum m_iM_i$ where $m_i\in \R$.
Since $K_{V'}+B'+\bbeta _{V'}\equiv K_{V'}+B'+\Delta'\equiv _V0$ where $(V',B'+\Delta ')$ is klt, then by the base point free theorem it follows easily that $D'\sim_\R\sum m_iM_i$, i.e. $D\sim_\R \sum m_i\nu _*M_i$. Therefore $\nu _* M_1,\ldots , \nu _* M_k$ generate ${\rm Cl}_{\R}(V)$.

%is  $\Q$-Cartier on $X'$ corresponding to an element in $H^1(\OO _{X'}^*)\otimes \Q \cong H^0(R^1\nu _* \OO _{X'}^*)\otimes \Q\cong H^0(R^2\nu _*\mathbb Q)$. Clearly $\nu_*$ In particular $H^0(R^2\nu _*\R)$ is generated by the classes of finitely many line bundles $M_1,\ldots , M_k$ on $X'$. Let $L$ be a reflexive rank 1 sheaf on $X$, then $(\nu ^{-1}_*L)^{[m]}$ is a line bundle for some integer $m>0$. Replacing $m$ by a multiple, we may assume that $(\nu ^{-1}_*L)^{[m]}\equiv M_1^{\otimes m_1}\otimes \ldots \otimes M_k^{\otimes m_k}$ where $m_i$ are integers. %But then $(\nu ^{-1}_*L)^{[-m]}\otimes  M_1^{\otimes m_1}\otimes \ldots \otimes M_k^{\otimes m_k}\equiv _X0$ and so By the base point free theorem, $(\nu ^{-1}_*L)^{[m]}\cong   M_1^{\otimes m_1}\otimes \ldots \otimes M_k^{\otimes m_k}$ and so . 

Let $X=\cup V_i$ where $V_i$ are relatively compact Stein spaces admitting a $\Q$-factorialization. By what we have seen above, each ${\rm Cl}_\R(V_i)$ is finite dimensional. Since this cover is finite, it follows that the image of $\Phi:{\rm Cl}_\R(X)\to \oplus {\rm Cl}_\R(V_i)$ is finite dimensional and defined over $\Q$. Let $L_1,\ldots ,L_r$ be reflexive rank 1 sheaves such that $\Phi(L_1),\ldots , \Phi(L_r)$ generate the image of $\Phi$. Let $\mu:X'\to X$ be the small birational morphism defined by Proposition \ref{t-2}, $V'_i=\mu ^{-1}(V_i)$, and $\mu _i=\mu |_{V'_i}$. In particular, $L'_i=\mu ^{-1}_*L_i$ are $\Q$-line bundles.
Let $L$ be a reflexive rank 1 sheaf on $X$, we aim to show that $\mu ^{-1}_*L$ is $\R$-Cartier (and hence $\Q$-Cartier). We have that $L|_{V_i}\sim _\R \sum r_jL_j|_{V_i}$. It follows that $(\mu_i ^{-1})_*L\sim _\R \sum r_jL'_j|_{V'_i}$ is $\R$-Cartier.
\end{proof}
We can now construct dlt modifications for generalized pairs, following the strategy of \cite{Fil20} and \cite{HP24}.
\begin{theorem}[Global dlt-models]\label{t-dltmodel} 
Let $X$  be a compact analytic variety and  $(X,B+\bbeta )$ a generalized pair. Then there exists a birational projective morphism $f^{\rm m}:X^{\rm m}\to X$ such that $X^{\rm m}$ is strongly $\Q$-factorial, all $f^{\rm m}$-exceptional divisors $P$ have discrepancy $a(P,X,B+\bbeta )\leq -1$ and
 $(X^{\rm m}, B^{\rm m})$ is dlt where $B^{\rm m}={(f^{\rm m})}^{-1}_*(B\wedge {\rm Supp}(B))+{\rm Ex}(f^{\rm m})$.
\end{theorem}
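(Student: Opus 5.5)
We follow the strategy of \cite{Fil20} and \cite[Theorem 1.6]{HP24}, replacing the local (Stein) MMP by the global relative MMP of Theorem \ref{t-projcone}. Since $X$ is compact, we first take a projective log resolution $\nu:X'\to X$ of $(X,B+\bbeta)$ such that $\bbeta$ descends to $X'$, and ${\rm Ex}(\nu)\cup\nu^{-1}_*B$ is snc. Such a resolution exists globally by Hironaka's theorem for compact complex spaces, and $X'$ is smooth, hence strongly $\Q$-factorial.

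We write $K_{X'}+B'+\bbeta_{X'}=\nu^*(K_X+B+\bbeta_X)$. Let $\Gamma'=\nu^{-1}_*(B\wedge{\rm Supp}(B))+{\rm Ex}(\nu)$. Then $(X',\Gamma')$ is dlt, and
\[K_{X'}+\Gamma'+\bbeta_{X'}\equiv_X E:=\Gamma'-B',\]
where $\nu_*E=B\wedge{\rm Supp}(B)-B\le0$. For each exceptional $P$ we have ${\rm mult}_P E=1+a(P,X,B+\bbeta)$. This is positive exactly for the divisors we must contract and nonpositive for those with $a\le -1$.

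Since $\bbeta$ is not a divisor, we reduce to an honest $\R$-Cartier b-divisor relative to $X$ as in the proof of Proposition \ref{c-relmmp}. $\bbeta_{X'}$ is $\nu$-nef, and on a cover by relatively compact Stein $V_i$ it is numerically equivalent over $V_i$ to $\R$-line bundles (Lemma \ref{l-rel} requires rational singularities, which may fail here). Instead, we simply run the MMP for $K_{X'}+\Gamma'+\bbeta_{X'}$ viewed as $K_{X'}+\Gamma'+E-\Gamma'+B'$ over $X$, i.e. the $(K_{X'}+\Gamma')$-MMP twisted by a class $\equiv_X E$. Concretely, since $\bbeta_{X'}\equiv_X -(K_{X'}+B')$, we can run the $(K_{X'}+\Gamma')$-MMP over $X$ with scaling of an ample divisor. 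This is an honest (non-generalized) dlt pair with $\nu$ projective, so Theorem \ref{t-projcone}(3) applies after a small perturbation $\Gamma'-\epsilon\lfloor\Gamma'\rfloor+\epsilon(\text{ample})$ to make it klt, in the usual way. Because $K_{X'}+\Gamma'\equiv_X E+(\text{pullback-type correction})$, each step is $E$-negative, and contracted curves lie in ${\rm Supp}(E^{>0})$.

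For termination we follow \cite{Fil20}. First we run the MMP with scaling, choosing the scaling divisor so that the MMP is also a $(K+\Gamma'-\delta E^{>0})$-type klt MMP. Then termination can be checked locally over finitely many Stein $V_i$ satisfying property (P), where it follows from \cite{Fuj22}, via the gluing as in Theorem \ref{t-projcone}(4). At the end, the pushforward $E^{\rm m}$ is nef over $X$ and $\nu^{\rm m}_*E^{\rm m}\le0$. Lemma \ref{l-negativity} gives $-E^{\rm m}\ge0$, so $E^{>0}$ is contracted, i.e. every surviving exceptional divisor has $a\le -1$. The output $X^{\rm m}$ is strongly $\Q$-factorial and dlt by the lemma that divisorial contractions and flips preserve these properties. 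The morphism $X^{\rm m}\to X$ is projective, being the output of a relative projective MMP.

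The main obstacle is the termination of a dlt (not klt) MMP, together with ensuring the dlt condition is preserved globally, since gdlt is not analytically local. I would handle this as in \cite{Fil20}: perturb the boundary to $\Gamma'-\epsilon(\Gamma'^{=1}\text{ components with }a>-1)$, which makes the relevant MMP klt and $E$-directed. The global projective MMP from Theorem \ref{t-projcone} then gives both running and (local-to-global) termination.
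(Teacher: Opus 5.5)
Your overall shape is right: a global projective log resolution $\nu$, the reduced boundary $\Gamma'$, the relation $K_{X'}+\Gamma'+\bbeta_{X'}\equiv_X E$ with ${\rm mult}_PE=1+a(P)$, and a final negativity argument. There is, however, a genuine gap. The MMP you actually run does not do what you need, and you never produce a pair to which Theorem~\ref{t-projcone} applies.

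First, you cannot trade the $(K_{X'}+\Gamma'+\bbeta_{X'})$-MMP for the $(K_{X'}+\Gamma')$-MMP. From $\bbeta_{X'}\equiv_X-(K_{X'}+B')$ you get $K_{X'}+\Gamma'\equiv_X E-\bbeta_{X'}$. But $\bbeta_{X'}$ is only $\nu$-nef: it is neither $\nu$-numerically trivial nor a pullback from $X$, and in general $\bbeta_X$ does not even have local potentials. Consequences:
\begin{itemize}
\item a $(K_{X'}+\Gamma')$-negative ray $R$ only satisfies $E\cdot R<\bbeta_{X'}\cdot R$, so the steps need not be $E$-negative;
\item at the end you only know that $E^{\rm m}-\bbeta_{X^{\rm m}}$ is nef over $X$, which says nothing about the sign of $E^{\rm m}$.
\end{itemize}

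Second, your klt perturbations do not work.
\begin{itemize}
\item $\Gamma'-\epsilon(\text{components with }a>-1)$ still has coefficient $1$ on the exceptional divisors with $a\le -1$ and on strict transforms of components of $B$ with coefficient $\ge 1$. So it is not klt.
\item $\Gamma'-\epsilon\lfloor\Gamma'\rfloor+\epsilon A$ with an arbitrary $\nu$-ample $A$ shifts the class by $\epsilon(A-\lfloor\Gamma'\rfloor)$. The resulting MMP is then no longer $E$-directed, and you do not say how this error is controlled in the negativity step.
\end{itemize}
Termination of dlt MMPs with scaling is not available. So neither termination nor the claim that exactly the places with $a>-1$ are contracted is established.

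The missing idea is what the paper does, following \cite{Fil20}. Take a $\nu$-exceptional divisor $C\ge 0$ with $-C$ $\nu$-ample; it exists because $\nu$ is a composition of blow-ups. Use it to absorb $\bbeta_{X'}$ into a relatively ample divisor while lowering the coefficient-one components:
\begin{itemize}
\item For $0<\epsilon\ll 1$ the divisor $\epsilon\lfloor\Gamma'\rfloor-C$ is $\nu$-ample. Hence $H:=\mu(\epsilon\lfloor\Gamma'\rfloor-C)-(K_{X'}+B')\equiv_X \mu(\epsilon\lfloor\Gamma'\rfloor-C)+\bbeta_{X'}$ is a $\nu$-ample $\R$-divisor.
\item The generalized pair with boundary $\Gamma'-\epsilon\mu\lfloor\Gamma'\rfloor$ and nef part $H$ is klt, with boundary big over $X$. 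Theorem~\ref{t-projcone}(4) then gives a terminating MMP over $X$ with strongly $\Q$-factorial output.
\item $K_{X'}+\Gamma'-\epsilon\mu\lfloor\Gamma'\rfloor+H=K_{X'}+\Gamma'+H'=E-\mu C$, where $H':=-\mu C-(K_{X'}+B')$ is also $\nu$-ample. So the same MMP is an MMP for the gdlt pair with boundary $\Gamma'$ and nef part $H'$, which gives dltness of $(X^{\rm m},\Gamma^{\rm m})$.
\end{itemize}
The price is that the MMP is directed by $E-\mu C$ rather than $E$. Negativity now gives $E^{\rm m}\le \mu C^{\rm m}$. To conclude that every exceptional $P$ with $a(P)>-1$ is contracted, you must take $\mu<(1+a(P))/{\rm mult}_PC$ for these finitely many $P$.

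The paper does this bookkeeping with the decomposition $B'=f^{-1}_*\{B\}+E^++F-G$, an extra coefficient on $F$, and two nested small parameters. With your normalization (coefficient $1$ on every exceptional divisor) the single condition on $\mu$ suffices. It also handles exceptional divisors with $a(P)=0$, which do not appear among the paper's $E^+,F,G$.
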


%\begin{theorem}[LC models]\label{t-lcmodel} Let $(X,B+\bbeta )$ be a generalized pair, where $X$ is compact. Then there exists a projective birational morphism $f^{\rm m}:X^{\rm m}\to X$ such that all $f^{\rm m}$-exceptional divisors $P$ have discrepancy $a(X,B+\bbeta ,P)\leq -1$ and$(X^{\rm m}, B^{\rm m})$ is log canonical where $B^{\rm m}={(f^{\rm m})}^{-1}_*(B\wedge {\rm Supp}(B))+{\rm Ex}(f^{\rm m})$.\end{theorem}

\begin{proof}
The proof follows that of \cite[Theorem 3.2]{Fil20} (see also \cite{HP24}) where we use Theorem \ref{t-3} for the existence of $\Q$-factorializations of generalized klt pairs and Theorem \ref{t-projcone} to run the required relative MMP.
We include the details for the convenience of the reader.

Let $f:X'\to X$ be a log resolution of the generalized pair $(X,B+\bbeta)$ and write $K_{X'}+B'+\beta '=f^*(K_X+B+\beta)$ where $\beta =\bbeta _X$ and $\beta '=\bbeta _{X'}$. 
We may assume that $f$ is defined by a sequence of blow-ups over centers of codimension $\geq 2$  in $X$, and hence $f$ is a projective morphism, and so we have $C\geq 0$ an $f$-exceptional divisor such that $-C$ is relatively ample.
We write $B'=f^{-1}_*\{ B\}+E^++F-G$ where $E^+,F,G$ are supported on the divisors of discrepancy $a\leq -1,\ -1<a <0,\ a >0$ respectively and we let $E={\rm red}(E^+)$ be the reduced divisor with the same support as $E^+$.
For any $0<\epsilon, \mu, \nu<1$, we have
\[E+(1+\nu)F-\mu C+\beta '=(1-\epsilon \mu)E+(1+\nu)F+\mu(\epsilon E-C)+\beta '.\]
Note that $-\mu C+\beta '\equiv _X -\mu C-(K_{X'}+B')$ is an ample $\R$-divisor (over $X$) and hence for $0<\epsilon \ll 1$,
$\mu(\epsilon E-C)+\beta '$ is also numerically equivalent to an ample $\R$-divisor (over $X$). 

 We may write \[-\mu C+\beta '\equiv _X H_{1,\mu}\qquad  {\rm and}\qquad
\mu(\epsilon E-C)+\beta '\equiv _X H_{2,\mu}.\] Let $\Delta_{\epsilon, \mu,\nu}:=f^{-1}_*\{ B\}+(1-\epsilon \mu)E+(1+\nu)F$, then $(X',\Delta_{\epsilon, \mu,\nu}+H_{2,\mu})$ is generalized klt  for $0< \nu \ll 1$.
By Theorem \ref{t-3} and  Theorem \ref{t-projcone}, there is a $\Q$-factorial minimal model projective over $X$
\[\psi^{\rm m}_{\epsilon, \mu,\nu}:X'\dasharrow X^{\rm m}_{\epsilon, \mu,\nu},\qquad  f^{\rm m}_{\epsilon, \mu,\nu}:X^{\rm m}_{\epsilon, \mu,\nu}\to X.\]
Since \[\Delta_{\epsilon, \mu,\nu}+H_{2,\mu }\equiv _Xf^{-1}_*\{ B\}+E+(1+\nu)F+H_{1,\mu },\]
then $\psi^{\rm m}_{\epsilon, \mu,\nu}$ is also a $\Q$-factorial minimal model over $X$
for the generalized dlt pair $(X', f^{-1}_*\{ B\}+E+(1+\nu)F+H_{1,\mu })$. In particular $(X^{\rm m}_{\epsilon, \mu,\nu}, B^{\rm m}_{\epsilon, \mu,\nu}:= (\psi^{\rm m}_{\epsilon, \mu,\nu})_*(f^{-1}_*\{ B\}+E+F))$ is dlt.

Define \[N:= K_{X^{\rm m}_{\epsilon, \mu,\nu}}+B_{\epsilon, \mu,\nu}^{\rm m}+\nu F^{\rm m}_{\epsilon, \mu,\nu} + H^{\rm m}_{1,\epsilon, \mu,\nu}\equiv _X K_{X^{\rm m}_{\epsilon, \mu,\nu}}+\Delta_{\epsilon, \mu,\nu}^{\rm m},\]
\[T:=K_{X^{\rm m}_{\epsilon, \mu,\nu}}+B_{\epsilon, \mu,\nu}^{\rm m}+(E^+-E)^{\rm m}_{\epsilon, \mu,\nu}-G^{\rm m}_{\epsilon, \mu,\nu}+\beta ^{\rm m}_{\epsilon, \mu,\nu}\equiv_X 0 .\]
We then have
\[T-N\equiv _X \mu C^{\rm m}_{\epsilon, \mu,\nu}+(E^+-E)^{\rm m}_{\epsilon, \mu,\nu}-G^{\rm m}_{\epsilon, \mu,\nu}-\nu F^{\rm m}_{\epsilon, \mu,\nu}=:D^{\rm m}_{\epsilon, \mu,\nu},\]
where $-D^{\rm m}_{\epsilon, \mu,\nu}$ is nef over $X$ and the pushforward of $D^{\rm m}_{\epsilon, \mu,\nu}$ to $X$ is effective. By the negativity lemma, $D^{\rm m}_{\epsilon, \mu,\nu}\geq 0$.  
The divisors $C$, $E^+-E$, $F$ and $G$ are independent of ${\epsilon, \mu,\nu}$, thus if $0<\mu \ll \nu \ll 1$, then $G^{\rm m}_{\epsilon, \mu,\nu}=\nu F^{\rm m}_{\epsilon, \mu,\nu}=0$. We now suppress the subscript $(.)_{\epsilon, \mu,\nu}$ and let $X^{\rm m}:=X^{\rm m}_{\epsilon, \mu,\nu}$.
Then $(X^{\rm m},B^{\rm m})$ is dlt and $ B^{\rm m}={(f^{\rm m})}^{-1}_*(B\wedge {\rm Supp}(B))+{\rm Ex}(f^{\rm m})$.

\end{proof}

\begin{theorem} 
\label{t-raynonbig}%Assume Conjecture \ref{c-BDPP13} in dimension $\leq n-1$.  
Let $X$ be a compact complex analytic variety in Fujiki's class $\mathcal C$ such that $(X,B+\bbeta)$ is generalized klt, $B+\bbeta _X$ is big, and $\alpha:=[K_X+B+\bbeta _X]$ is nef but not big. Then $X$ is covered by $\alpha $-trivial  rational curves.
\end{theorem}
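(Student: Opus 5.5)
We reduce to the case where $X$ is K\"ahler, strongly $\Q$-factorial, and $\alpha$ is represented by an honest klt adjoint class plus a K\"ahler form. Then we use the abundance-type results available for big boundaries to produce a fibration whose fibres are covered by rational curves.

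First we replace $X$. By Theorem \ref{t-3}, there is a small projective modification $X'\to X$ with $X'$ strongly $\Q$-factorial. Because the modification is small, $\alpha$ pulls back to the adjoint class of the induced generalized klt pair, and $\alpha$-trivial curves on $X'$ map to $\alpha$-trivial curves or points on $X$. Since $X$ is in class $\mathcal C$ and the morphism is projective, $X'$ is again in class $\mathcal C$, and it suffices to cover $X'$.

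Next we pass to a K\"ahler model. By Lemma \ref{l-kltbig}, there is a K\"ahler resolution $\nu:Y\to X$ and a generalized klt pair $(X,G+\ggamma)$ with $K_X+G+\ggamma_X\equiv\alpha$, where $\ggamma$ descends to $Y$ and $\ggamma_Y-\omega_Y$ is nef for a K\"ahler form $\omega_Y$. On $Y$ we write $K_Y+G_Y+\ggamma_Y=\nu^*\alpha$. We then run the relative MMP of Proposition \ref{c-relmmp} (or Theorem \ref{t-projcone}, since $\nu$ is projective) for $K_Y+G_Y^{\ge0}+\epsilon\,{\rm Ex}(\nu)+\ggamma_Y$ over $X$. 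As in Lemma \ref{l-modnef} and the proof of Theorem \ref{t-dltmodel}, this yields a K\"ahler model $Y'\to X$ on which the pullback of $\alpha$ equals $K_{Y'}+\Delta_{Y'}+\ggamma_{Y'}$ up to a small effective correction. Here $\Delta_{Y'}$ is klt and $\ggamma_{Y'}$ is big and nef.

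On the compact K\"ahler variety $Y'$, the class $\nu'^*\alpha$ is nef, and it is not big because bigness is a birational invariant. We now invoke the K\"ahler MMP for big boundaries, either the cone theorem (Corollary \ref{c-conetheorem} in the strongly $\Q$-factorial form of \cite{HP24}) or the results of \cite{DH24,DH25}, to obtain the corresponding good minimal model statement. This gives that $\nu'^*\alpha$ is semiample, i.e. $\nu'^*\alpha=g^*\omega_Z$ for a contraction $g:Y'\to Z$ with $\dim Z<\dim Y'$.

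\textbf{This is the main obstacle.} Without the transcendental base point free theorem, we would instead argue as in the strategy described for Theorem \ref{t-ray}. We take the MRC fibration and use the \cite[Theorem 2.2]{HP24} criterion: if $K_{Y'}+\Delta+\ggamma_{Y'}$ is nef but not big, then $K_{Y'}$ is not pseudo-effective on a general fibre of a suitable map. Indeed, $(K_{Y'}+\Delta+\ggamma)^{n}=0$ while $\ggamma$ is big, so the class $K_{Y'}\cdot(K_{Y'}+\Delta+\ggamma)^{n-1}$ is negative. Hence $Y'$ is uniruled, and by bend-and-break along curves with $\alpha\cdot C=0$ (using the nef-not-big degeneracy), $Y'$ is covered by $\alpha$-trivial rational curves.

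Granting the fibration $g$, the general fibre $F$ satisfies $(K_{Y'}+\Delta+\ggamma)|_F\equiv 0$ with $\ggamma|_F$ big. Therefore $-(K_F+\Delta_F)$ is big, and $F$ is rationally connected by \cite{HM07}. So $F$ is covered by rational curves, and these are $\alpha$-trivial since they are contracted by $g$. Pushing these curves to $X$ via $\nu'$, their images are rational curves with $\alpha\cdot\nu'_*C=0$. They cover $X$ because $\nu'$ is surjective and the curves cover a dense set, hence everything by closedness of the Douady/Barlet family.
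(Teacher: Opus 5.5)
\textbf{There is a genuine gap, at the step you flagged yourself.}

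Semiampleness of $\nu'^*\alpha$ on a compact K\"ahler $Y'$ is the transcendental base point free theorem (\cite[Conjecture~1.6]{DH25}). It is not available here. The paper also notes that \cite{HX26}, where it is eventually proved, relies on results of this paper, so invoking it is circular. The cone theorem does not substitute for it. It only produces $\alpha$-trivial extremal rays spanned by rational curves, with no contraction and hence no covering family. Moreover, Corollary \ref{c-conetheorem} is itself deduced from the statement you are proving.

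Your fallback does not close the gap either:
\begin{itemize}
\item The claim $K_{Y'}\cdot\alpha^{n-1}<0$ fails whenever $\alpha^{n-1}=0$, for instance if $\alpha\equiv 0$ or $\alpha$ has numerical dimension $<n-1$.
\item Even when that inequality holds, it only gives uniruledness. This is already immediate from $K$ not being pseudo-effective.
\item Bend-and-break needs as input a covering family of curves $C$ with $\alpha\cdot C=0$ and $K\cdot C<0$. Producing such a family is the whole content of the theorem.
\end{itemize}

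Your second step is also unjustified. Proposition \ref{c-relmmp} requires a K\"ahler base. A relative MMP over the merely Fujiki $X$ (via Theorem \ref{t-projcone}) need not preserve K\"ahlerness, because relative extremal rays need not be extremal in the absolute cone; see Remark \ref{r-1}. In addition, your ``small effective correction'' $E$ means that on a fibre you only get $K_F+\Delta_F+\ggamma_F\equiv E|_F$, not $\equiv 0$. So bigness of $-(K_F+\Delta_F)$ is lost.

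\textbf{The missing idea is to pass to a projective variety on which \cite{BCHM10} applies.} Take the general fibre $F$ of the MRC fibration $f':X'\to Z$ of a K\"ahler resolution.
\begin{enumerate}
\item $F$ is rationally connected, hence projective with $h^2(\OO_F)=0$. So $\bbeta|_F$ and $\alpha|_F$ are numerically $\R$-divisors.
\item To show $\alpha|_F$ is not big, the paper uses that $K_Z$ is pseudo-effective \cite{Ou25}. It also shows $K_{X'}+t\alpha'+(1-\delta)\omega'$ is not pseudo-effective; otherwise $(t+1)\alpha'$ plus an exceptional divisor would be big, forcing $\alpha$ to be big.
\item Then \cite[Theorem 5.2]{CH20} transfers non-pseudo-effectivity of $K_{X'/Z}+t\alpha'+(1-\delta)\omega'$ to $F$, so $\alpha|_F$ is not big.
\item On $F$ one runs an MMP for the klt pair $K_F+\Delta_F\equiv L+B_F^{\le 0}$ with $\Delta_F$ big. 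It contracts exactly ${\rm Supp}(B_F^{\le 0})$, the $N_\sigma$-part, and ends at a semiample model whose morphism has positive-dimensional fibres.
\item This gives a covering family of $L$-trivial curves $C$ avoiding codimension-two sets. They satisfy $K_F\cdot C<0$, since $\Delta_F\cdot C>0$ and $B_F^{\le 0}\cdot C=0$.
\item Finally \cite[Theorem 5.8]{Kol96} yields $\alpha$-trivial rational curves through every point.
\end{enumerate}

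Your final paragraph, granting the fibration, is in the right spirit. The whole point of the paper's argument, however, is to obtain that fibration only on the projective fibre $F$, where it is available.
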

\begin{proof} Replacing $X$ by a $\Q$-factorialization (cf. Theorem \ref{t-3}), we may assume that $X$ is strongly $\Q$-factorial.
%Let $\nu :X'\to X$ be a resolution such that $X'$ is K\"ahler and there is an effective exceptional $\R$-divisor $F$ such that $-F$ is $\nu$-ample.

By Lemma \ref{l-kltbig} there is a generalized klt pair $(X,G+\ggamma)$ and a K\"ahler form $\omega'$ on $X'$ such that $K_{X}+B+\bbeta _{X}\equiv K_{X}+G+\ggamma _{X}$ and $\ggamma _{X'}-\omega'$ is nef.

%Since $B+\bbeta _X$ is  big, and $\alpha:=[K_X+B+\bbeta _X]$ is not big, then $K_X$ is not pseudo-effective.For any $0<\epsilon \ll 1$, $\omega ':=\nu ^*\omega -\epsilon E$ is K\"ahler and $(X',G'+\epsilon E)$ is sub-klt where $K_{X'}+G'+\ggamma _{X'}=\nu ^*(K_X+G+\ggamma_X)$. Replacing $E$ by $\epsilon E$ we may assume that $\epsilon =1$. We also write $G'=(G')^{\geq 0}+(G')^{\leq 0}$ where $(G')^{\geq 0},(G')^{\leq 0}$ are effective with no common components.

Let $X\dasharrow Z$ be the MRC fibration and $\nu :X'\to X$ be a resolution such that $X'$ is K\"ahler and $f':X'\to Z$ is a morphism. We may assume that $\bbeta$ and $\ggamma$ descend to $X'$. %that there is an effective $\nu$-exceptional $\R$-divisor $E$ such that $-E$ is $\nu$-ample. 
Note that $K_X$ is not pseudo-effective and hence $X$ is uniruled, $\dim X>\dim Z$. We may assume that $Z$ is smooth and K\"ahler. We write \[K_{X'}+G'+\ggamma _{X'}=\nu ^*(K_X+G+\ggamma _X)\equiv \nu ^*(K_X+B+\bbeta _X)\] and $G'=(G')^{\geq 0}-(G')^{\leq 0}$ where $(G')^{\geq 0},(G')^{\leq 0}$ are effective with no common components.  Note that a general fiber $F$ of $f'$ is smooth and rationally connected, hence projective.  We also remark that $f:X\dasharrow Z$ is quasi-holomorphic and hence a holomorphic map over an open subset of $Z$. In particular, for general $z\in Z$, $\nu _z:X'_z\to X_z$ is a morphism between general fibers of $f'$ and $f$, and so if $E$ is $\nu$-exceptional, then $E|_F$  is $\nu |_F$-exceptional. 

We claim that $K_{X'/Z}+t\alpha '+(1-\delta) \omega ' $ is not pseudo-effective for any $0<\delta\leq 1$ and $t>0$. By \cite{Ou25}, $K_Z$ is pseudo-effective and so it suffices to show that $K_{X'}+t\alpha '+(1-\delta) \omega ' $ is not pseudo-effective.
If this is not the case, then 
\[K_{X'}+t\alpha'+(G')^{\geq 0}+(1-\delta) \omega '\equiv (t+1)\alpha '+(G')^{\leq 0}-(\delta\omega '+\gamma _{X'}-\omega ')\] is pseudo-effective and so $(t+1)\alpha '+(G')^{\leq 0}$ is big. Since $(G')^{\leq 0}$ is $\nu$-exceptional, this implies that $(t+1)\alpha$ is big which is impossible.

By \cite[Theorem 5.2]{CH20}, it follows that $K_F+t\alpha _F+(1-\delta) \omega_F:=(K_{X'/Z}+t\alpha '+(1-\delta) \omega ' )|_F$  is not pseudo-effective and in particular $\alpha |_F$ is not big. Let $K_{X'}+B_{X'}+\bbeta _{X'}=\nu ^*(K_X+B+\bbeta _X)$, $\bbeta _F=\bbeta _{X'}|_F$, and $B_F=B_{X'}|_F$. Since $F$ is projective and $h^2(\OO _F)=0$, then $\bbeta |_F\equiv  N$ where $ N$ is a $\R$-Cartier divisor and $B_F^{\geq 0}+N$ is big. Similarly $\alpha \equiv L$ where $L$ is a nef divisor. Therefore  $L+B^{\leq 0}_F\equiv K_F+B_F^{\geq 0}+N\equiv K_F +\Delta _F$ is nef where $(F,\Delta _F)$ is klt and $\Delta _F$ is big. 
By \cite{BCHM10}, we may run the corresponding mmp
which terminates with a good minimal model $\psi: F\dasharrow F'$ so that $K_{F'}+\Delta _{F'}=\psi _*(K_F+\Delta _F)$ is semiample. We note that since
$B_F^{\leq 0}$ is $\nu |_F$-exceptional and $L\equiv \alpha |_F$ is pulled back from $\nu (F)$, then the negative part of the divisorial Zariski decomposition
${N_{\sigma}}(L+B_F^{\leq 0})=B_F^{\leq 0}$ and so ${\rm Supp}(B_F^{\leq 0})$ is the set of divisors contracted by $\psi$. In particular $L'=\psi_*L\equiv K_{F'}+\Delta _{F'}$ is semiample. 

Let $\eta:F'\to \bar F$ be the corresponding morphism, then $K_{F'}+\Delta _{F'}=\eta ^*A _{\bar F}$ where $A _{\bar F}$ is ample. Since $K_{F}+\Delta _F$ is not big, then $F'$ is covered by $L'$-trivial curves.
We may assume that these curves are given by intersection of general very ample divisors with fibers of $\eta$, in particular they avoid any given codimension 2 closed subset of $F'$.
Since $B_F^{\leq 0}$ is $\psi$-exceptional then $F$ is covered by $L$-trivial curves. 

Let $C$ be a very general such curve, then $F$ is smooth along $C$, $L\cdot C=0$, $ B_F^{\leq 0}\cdot C=0$, $(\Delta _F)\cdot C >0$ (as $\Delta _F$ is big) and so $K_F\cdot C<0$. By \cite[Theorem 5.8]{Kol96}, through every point $x\in C$ there is a rational curve $D_x\subset F$ such that $L\cdot D_x=0$. Therefore $F$ is covered by $\alpha$-trivial rational curves and hence so is $X$.
% \sout{Let $\Upsilon$ be a face of $\overline{\rm NE}(F)$ defined by the $\alpha _F$-trivial $K_F+(1-\delta) \omega_F$ negative curves. Let $\eta:F\to \bar F $ be the corresponding contraction (which exists by} \cite[Theorem 1.2]{DH24}), \sout{then $\alpha _F=\eta ^*\alpha_{\bar F}$ where $\alpha_{\bar F}$ is K\"ahler. If $\eta $ is birational, then $\alpha _F $ is big which is a contradiction.
% Therefore $\eta$ is of fiber type and hence $F$ is covered by a family of $\alpha_F$-trivial rational curves $C_t$.} %We may assume that $0>K_F\cdot C_t\geq -2\dim F$ and hence $(1-\delta)\omega _F\cdot C_t<2\dim F$. But then these curves have bounded degree with respect to $\omega$ and so they must belong to finitely many numerical classes. Therefore, we may assume that  $\omega _F\cdot C_t\leq 2\dim F$.  But then $X$ is also covered by $\alpha$-trivial rational curves such that $0<-(K_X+B+\bbeta _X)\cdot C=\omega \cdot C\leq 2\dim X$.
\end{proof}

\begin{remark}
Note that the condition $B+\bbeta_X$ being big in the theorem above is necessary.
Consider in fact a complex torus $X$ with $B = 0$, $\bbeta = 0$, then
$\alpha:= [K_X+B+\bbeta_X]\equiv 0$ % is nef but not big, and $B+\bbeta_X = 0$ is not big either. In this case 
but $X$ contains no rational curves.%, hence no $\alpha$-trivial rational curves.
\end{remark}

Combining Theorem \ref{t-3} and Theorem \ref{t-raynonbig} with \cite{HP24}, we obtain the following generalization of \cite[Theorem 0.5]{HP24}.
\begin{corollary}\label{c-conetheorem} Let $(X,B+\bbeta )$ be a  compact K\"ahler generalized klt pair. Then there are at most countably many rational curves $\{\Gamma _i\}_{i\in I}$ such that $0<-(K_X+B+\bbeta _X)\cdot \Gamma _i\leq 2\dim X$ for all $i\in I$ and  \[\overline{\rm NA}(X)=\overline{\rm NA}(X)_{(K_X+B+\bbeta _X) \geq 0}+\sum _{i\in I}\mathbb R ^+[\Gamma _i].\]
Moreover, if $B+\bbeta _X$ or $K_X+B+\bbeta _X$ is big, then $I$ is finite.
\end{corollary}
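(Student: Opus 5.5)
We reduce to the strongly $\Q$-factorial case treated in \cite[Theorem 0.5]{HP24}. By Theorem \ref{t-3}, there is a small projective bimeromorphic morphism $\mu:X'\to X$ with $X'$ strongly $\Q$-factorial. Since $\mu$ is small, $K_{X'}+B'+\bbeta_{X'}=\mu^*(K_X+B+\bbeta_X)$ with $B'=\mu^{-1}_*B$, so $(X',B'+\bbeta)$ is generalized klt. Also $X'$ is K\"ahler: $\mu$ is projective and $X$ is K\"ahler, so if $\omega$ is a K\"ahler form on $X$ and $H$ is $\mu$-ample, then $\mu^*\omega+\epsilon H$ is K\"ahler for $0<\epsilon\ll1$. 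Hence \cite[Theorem 0.5]{HP24} applies on $X'$ and gives countably many rational curves $\Gamma'_i$ with $0<-(K_{X'}+B'+\bbeta_{X'})\cdot\Gamma'_i\le 2\dim X$ and
\[\overline{\rm NA}(X')=\overline{\rm NA}(X')_{(K_{X'}+B'+\bbeta_{X'})\ge0}+\sum_i\R^+[\Gamma'_i].\]

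We then push forward along $\mu_*:N_1(X')\to N_1(X)$, which is dual to $\mu^*:H^{1,1}_{\rm BC}(X)\to H^{1,1}_{\rm BC}(X')$. The key point is that $\mu_*\overline{\rm NA}(X')=\overline{\rm NA}(X)$. The inclusion $\subseteq$ is clear, since pushforwards of positive currents are positive. For $\supseteq$, we argue by duality. A class $\eta$ on $X$ is nonnegative on $\mu_*\overline{\rm NA}(X')$ if and only if $\mu^*\eta$ is nonnegative on $\overline{\rm NA}(X')$. By the K\"ahler duality of \cite{HP24} on the K\"ahler space $X'$, this means $\mu^*\eta$ is nef, and nefness of $\mu^*\eta$ implies $\eta$ is nef. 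So both cones have the same dual, and one also checks that $\mu_*\overline{\rm NA}(X')$ is closed; this closedness is the subtle part and is where we expect the main obstacle. Granting it, we set $\Gamma_i=\mu(\Gamma'_i)$. Because $\mu$ is small and the $\Gamma'_i$ are $(K_{X'}+B'+\bbeta_{X'})$-negative, they are not contracted by $\mu$ (for contracted curves, $\mu^*(\cdot)\cdot\Gamma'=0$). The projection formula then gives $(K_X+B+\bbeta_X)\cdot\Gamma_i=(K_{X'}+B'+\bbeta_{X'})\cdot\Gamma'_i$, and the cone decomposition descends, since the nonnegative part maps into the nonnegative part. If the closedness of the pushed-forward cone fails, we would fall back on an argument by extremal rays: any $(K_X+B+\bbeta_X)$-negative extremal ray of $\overline{\rm NA}(X)$ pulls back to a face of $\overline{\rm NA}(X')$, and that face is negative.

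For finiteness, bigness is preserved by $\mu^*$, because $\mu$ is small. So if $K_X+B+\bbeta_X$ is big, then so is $K_{X'}+B'+\bbeta_{X'}$. If $B+\bbeta_X$ is big, then $B'+\bbeta_{X'}$ differs from $\mu^*(B+\bbeta_X)$ only by the $\mu$-exceptional term, which vanishes in codimension one; we still need to confirm bigness here. A cleaner route is to apply Lemma \ref{l-kltbig} to get $\bbeta$ with a K\"ahler part, after which the finiteness in \cite{HP24} applies. The index set on $X'$ is then finite, and hence so is its image.

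Alternatively, the non-$\Q$-factorial ingredient is exactly Theorem \ref{t-raynonbig}, which the remark before the statement points to. It supplies the step in the proof of \cite{HP24} where $\Q$-factoriality enters: $\alpha$-trivial rational curves cover the null locus in the non-big case. We would rerun the proof of \cite{HP24} directly on $X$, using Theorem \ref{t-raynonbig} in place of its $\Q$-factorial counterpart. We would take this route if the pushforward argument above fails to close up.
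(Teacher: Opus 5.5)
The reduction step is fine and matches the first half of the paper's proof. The paper also passes to a small strongly $\Q$-factorial modification by Theorem \ref{t-3} and descends the cone ``arguing as in the proof of \cite[Corollary 5.3]{DHP24}''. Your duality argument (projection formula plus Lemma \ref{l-nefpb}) is a sensible way to do this. The closedness you flag is the standard point handled there, and the bigness transfer in the finiteness part is harmless because $\mu$ is small.

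\textbf{The gap.} You then apply \cite[Theorem 0.5]{HP24} on $X'$ as a black box for the full strongly $\Q$-factorial statement. The paper does not do this. It explicitly obtains the corollary by combining Theorem \ref{t-3} \emph{and} Theorem \ref{t-raynonbig} with \cite{HP24}. On $X'$ it reruns the argument of \cite[Theorem 3.1, Corollary 3.2]{HP24}. That argument reduces everything to the following: for each K\"ahler form $\omega$ such that $\alpha=K_{X'}+B'+\bbeta_{X'}+\omega$ is nef but not K\"ahler, produce an $\alpha$-trivial rational curve $C$ with $0<-(K_{X'}+B'+\bbeta_{X'})\cdot C\le 2\dim X$.

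The paper takes this from \cite[Theorem 3.1]{HP24} only when $\alpha$ is big. When $\alpha$ is nef but not big (as happens, for instance, for rays of fibre type), the input is Theorem \ref{t-raynonbig}, applied to $(X',B'+(\bbeta+\bar\omega))$; its boundary-plus-nef part is big thanks to $\omega$. This is where the new work lies:
\begin{itemize}
\item uniruledness of $X'$ via \cite{Ou25};
\item \cite[Theorem 5.2]{CH20}, to see that $\alpha$ is not big on a general fibre $F$ of the MRC fibration;
\item an MMP on the projective fibre $F$ via \cite{BCHM10}, giving a covering family of $\alpha$-trivial curves;
\item bend-and-break, to make these curves rational.
\end{itemize}
Your main line contains none of this. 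As written, it silently assumes the very case the paper has to prove.

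\textbf{Your fallback.} Your last paragraph does name Theorem \ref{t-raynonbig}, but misreads its role. It is not a non-$\Q$-factorial replacement for a $\Q$-factorial step of \cite{HP24}: its own proof begins by passing to a $\Q$-factorialization via Theorem \ref{t-3}. The paper also never reruns \cite{HP24} directly on the non-$\Q$-factorial $X$. Theorem \ref{t-raynonbig} is the input for the non-big case, to be used on $X'$ after your reduction.

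Assembled that way, your proposal becomes the paper's proof: your first paragraph, then the \cite{HP24} reduction on $X'$, with \cite[Theorem 3.1]{HP24} for big $\alpha$ and Theorem \ref{t-raynonbig} for non-big $\alpha$.
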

\begin{proof} By Theorem \ref{t-3} there exists
a small birational map $\nu : X' \to X$ such that $X'$ is strongly $\Q$-factorial. Let $K_{X'}+B'+\bbeta _{X'}=\nu ^*(K_X+B+\bbeta_X)$, then $(X',B'+\bbeta)$ is generalized klt. Arguing as in the proof of \cite[Corollary 5.3]{DHP24}, we may replace $(X,B+\bbeta)$ by $(X',B'+\bbeta)$, and so we may assume that $X$ is strongly $\Q$-factorial.
By the proof of \cite[Theorem 3.1, Corollary 3.2]{HP24}, it suffices to show that if $\omega$ is any K\"ahler form such that $\alpha=K_X+B+\bbeta _X+\omega$ is nef but not K\"ahler, then there is
an $\alpha$-trivial rational curve $C$ such that $0<-(K_X+B+\bbeta _X)\cdot C\leq 2\dim X$.
By \cite[Theorem 3.1]{HP24}, this holds if $K_{X}+B+\bbeta _{X}+\omega$ is big. If instead $K_{X}+B+\bbeta _{X}+\omega$ is not big, then it holds by Theorem \ref{t-raynonbig}.  
\end{proof}
\section{Proof of Theorem \ref{t-contK}}

This section is devoted to the proof of Theorem \ref{t-contK}. The technical heart of the proof is Theorem \ref{t-ray}, a K\"ahlerness criterion formulated in terms of $\alpha$-trivial rational curves.

\begin{definition}[Null locus, restricted non-K\"ahler locus]
Let $X$ be a compact normal variety in Fujiki's class $\mathcal{C}$, and let $\alpha \in H^{1,1}_{\rm{BC}}(X)$ be a nef and big class. The null locus of $\alpha$ is defined to be
$$\text{Null}(\alpha) = \bigcup_{\int_{V} \alpha^{\dim V} = 0} V,$$
where $V \subset X$ are positive dimensional irreducible analytic subsets of $X$. The restricted non-K\"ahler locus is defined to be
$$
E_{n K}^{a s}(\alpha):=\bigcap_{T \in \alpha} E_{+}(T),
$$
where $T$ above ranges over K\"ahler currents in $\alpha$ with weak analytic singularities, and $E_{+}(T) \subset X$ is the analytic subset of $X$ consisting of points at which the Lelong numbers of $T$ are strictly positive.
\end{definition}

The null locus and the restricted non-K\"ahler locus of a nef and big class coincide on compact normal complex analytic variety in Fujiki's class $\mathcal{C}$.

\begin{theorem}\label{t-null} Let $X$ be a compact normal variety in Fujiki's class $\mathcal C$, and let $\alpha$ be a smooth
(1,1)-form, which is locally $\partial\bar\partial$-exact and such that the corresponding class is nef
and big. Then $E^{as}
_{nK} (\alpha ) = {\rm Null}(\alpha )$. In particular, the set ${\rm Null}(\alpha )$ is analytic.
\end{theorem}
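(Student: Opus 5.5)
We reduce to the known K\"ahler case, the Collins--Tosatti theorem extended to singular K\"ahler spaces (cf. \cite{HP24}), by passing to a K\"ahler resolution and comparing loci through pullback.

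Since $X$ is in Fujiki's class $\mathcal C$, choose a resolution $\nu: X'\to X$ with $X'$ a compact K\"ahler manifold; we may take $\nu$ projective and an isomorphism over $X_{\rm reg}$ away from its exceptional locus. Then $\nu^*\alpha$ is nef and big on $X'$. Let $E'$ be an effective $\nu$-exceptional divisor with $-E'$ $\nu$-ample, and write $\mathrm{Ex}(\nu)$ for the exceptional locus.

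\textbf{Inclusion ${\rm Null}(\alpha)\subset E^{as}_{nK}(\alpha)$.} This is the easy direction. Let $V\subset X$ be irreducible and positive dimensional with $\int_V\alpha^{\dim V}=0$, and suppose $V\not\subset E_+(T)$ for some K\"ahler current $T\in\alpha$ with analytic singularities. Then $T|_V$ is a well-defined positive current dominating a Hermitian form on a dense Zariski open subset of $V$. Pulling back to a resolution of $V$ and applying Boucksom's volume inequality, we get $\int_V \alpha^{\dim V}\ge \int_{V_{\rm reg}\setminus E_+(T)} T_{ac}^{\dim V}>0$, a contradiction.

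\textbf{Inclusion $E^{as}_{nK}(\alpha)\subset{\rm Null}(\alpha)$.} Apply the Collins--Tosatti theorem on $X'$:
\[
E_{nK}(\nu^*\alpha)={\rm Null}(\nu^*\alpha).
\]
Every positive dimensional fibre component of $\nu$ is $\nu^*\alpha$-null, and a subvariety $V'\subset X'$ not contained in $\mathrm{Ex}(\nu)$ is $\nu^*\alpha$-null if and only if $\nu(V')$ is $\alpha$-null (by projection formula, using that $\nu|_{V'}$ is generically finite onto its image when $\dim \nu(V')=\dim V'$, and that the integral vanishes when the dimension drops). Hence
\[
{\rm Null}(\nu^*\alpha)=\nu^{-1}{\rm Null}(\alpha)\cup \mathrm{Ex}(\nu),
\]
where the union ranges over the components of $\mathrm{Ex}(\nu)$ lying over points of positive dimensional fibres.

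It remains to construct, for each point $x\notin{\rm Null}(\alpha)$, a K\"ahler current $T\in\alpha$ with analytic singularities that is smooth near $x$. On $X'$ there is a K\"ahler current $T'\in\nu^*\alpha$ with analytic singularities along ${\rm Null}(\nu^*\alpha)$. The difficulty is descending $T'$ to $X$, since $\nu_*T'$ need not be a current with local potentials, or may fail to be K\"ahler near $\nu(\mathrm{Ex}(\nu))$. We handle this as follows.
\begin{enumerate}
\item Write $\nu^*\alpha=(\nu^*\alpha-\epsilon E')+\epsilon E'$ and use that $\nu^*\alpha-\epsilon E'$ is K\"ahler near fibres over points outside ${\rm Null}(\alpha)$, via a gluing argument.
\item Use that over a small Stein neighbourhood $U$ of $x$ the class $\alpha$ is $\partial\bar\partial$-exact, so that $\alpha|_U=i\partial\bar\partial\varphi$ and the potential $\varphi$ can be made strictly psh by adding a local exhaustion.
\item Patch the global current $\nu_*T'$, whose potentials are $\nu$-fibrewise constant psh away from $x$, with the local strictly psh potential using a regularized maximum (Richberg/Demailly gluing).
\end{enumerate}
The gluing is legitimate because on an annulus around $x$ the pushed-forward current is smooth and strictly positive: its singularities lie over ${\rm Null}(\alpha)\cup\nu(\mathrm{Ex}(\nu))$. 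In step (1), points of $\nu(\mathrm{Ex}(\nu))$ outside ${\rm Null}(\alpha)$ have fibres on which $\nu^*\alpha$ is trivial, so these fibres lie in ${\rm Null}(\nu^*\alpha)$. We therefore first subtract $\epsilon E'$ to move the singularities off such fibres, which is allowed because $\nu^*\alpha-\epsilon E'$ stays big and is K\"ahler relative to $\nu$ over $U$.

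The main obstacle is this descent. One must show that the Collins--Tosatti current on $X'$ can be modified to be nonsingular along entire fibres over points $x\notin{\rm Null}(\alpha)$, which requires a relative version of the non-K\"ahler locus statement. I would first try to prove it via the relative K\"ahlerness of $\nu^*\alpha-\epsilon E'$ over $U$ combined with \cite[Lemma 3.7]{DH25}.

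Analyticity of ${\rm Null}(\alpha)$ then follows, since $E^{as}_{nK}(\alpha)$ is an intersection of analytic sets, and this intersection stabilizes by Noetherianity.
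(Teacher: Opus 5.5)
Your inclusion ${\rm Null}(\alpha)\subset E^{as}_{nK}(\alpha)$ and your identity ${\rm Null}(\nu^*\alpha)=\nu^{-1}({\rm Null}(\alpha))\cup{\rm Ex}(\nu)$ are fine. Your worries about descending a K\"ahler current $T'\geq\omega'$ in $\nu^*\alpha$ are misplaced. Over a Stein $U$ with $\alpha|_U=i\partial\bar\partial\varphi$, the current $T'$ has a psh potential on $\nu^{-1}(U)$. That potential is constant on the connected fibres of $\nu$, so it descends to a psh function on the normal space $X$. Moreover $\omega'\geq c\,\nu^*\omega_X$ for a Hermitian form $\omega_X$, so $\nu_*T'\geq c\,\omega_X$.

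The genuine problem is the Lelong numbers. Since ${\rm Ex}(\nu)\subset{\rm Null}(\nu^*\alpha)=E_{nK}(\nu^*\alpha)$, every K\"ahler current in $\nu^*\alpha$ with analytic singularities is singular along all of ${\rm Ex}(\nu)$. Its descent therefore has positive Lelong number at every point of $\nu({\rm Ex}(\nu))$, and this set contains $X_{\rm sing}$ for any resolution of a normal space. So applying \cite{CT15} on $X'$ and pushing forward only gives
$E^{as}_{nK}(\alpha)\subset{\rm Null}(\alpha)\cup\nu({\rm Ex}(\nu))$.
The points of $\nu({\rm Ex}(\nu))\setminus{\rm Null}(\alpha)$ are exactly the new content of the statement. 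You flag this descent as the main obstacle and leave it unproved, so the argument is incomplete at its essential step.

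The devices you sketch cannot close this gap.

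\emph{Gluing.} Gluing a global potential $u$ to a local strictly psh potential near $x$ by a regularized maximum requires $u$ to be bounded below on a shell around $x$. In other words, $x$ must be an isolated point of the polar set; this is the standard argument that a non-K\"ahler locus has no isolated points. Suppose $X$ is singular along a curve through $x$ not contained in ${\rm Null}(\alpha)$. Then every such shell meets $\nu({\rm Ex}(\nu))$, so your claim that the pushed-forward current is smooth on an annulus around $x$ is false.

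\emph{Subtracting $\epsilon E'$.} This works against you. A K\"ahler current $S\in\nu^*\alpha-\epsilon E'$ yields $S+\epsilon[E']\in\nu^*\alpha$. Since $-E'$ is $\nu$-ample, every contracted curve lies in $E'$, so $\nu^{-1}(x)\subset E'$. The descended potential is comparable to $\epsilon\log|s_{E'}|$ near $\nu^{-1}(x)$, so it again has a positive Lelong number at $x$. Relative K\"ahlerness of $\nu^*\alpha-\epsilon E'$ over a Stein $U$ is purely local: on $U$ the class $\alpha$ is already represented by K\"ahler forms. It produces no global current.

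What you would actually need, for each $x\notin{\rm Null}(\alpha)$, is a current $T'\in\nu^*\alpha$ with $T'\geq c\,\nu^*\omega_X$ (not $\geq$ a K\"ahler form on $X'$) that is bounded near $\nu^{-1}(x)$. Collins--Tosatti on $X'$ does not supply this, and producing it amounts to redoing the Collins--Tosatti argument on the singular space itself. The paper does not reprove any of this. It cites \cite{CT15} for the smooth case and \cite[Theorem 4.21]{HP24} for normal varieties in Fujiki's class $\mathcal C$. The latter is exactly the statement, so your resolution detour is unnecessary if you invoke it as your opening sentence suggests.
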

\begin{proof} The smooth case is proven in \cite{CT15}.
The general case is proven in \cite[Theorem 4.21]{HP24}.
\end{proof}

\begin{theorem} 
\label{t-ray}%Assume Conjecture \ref{c-BDPP13} in dimension $\leq n-1$.  
Let $X$ be a compact complex analytic variety in Fujiki's class $\mathcal C$ of dimension $n$ such that $(X,B+\bbeta)$ is generalized klt, $\alpha:=[K_X+B+\bbeta _X]$ is nef and big but not K\"ahler. Then there is an $\alpha $-trivial  rational curve $C$.
\end{theorem}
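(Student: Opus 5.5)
We follow the strategy outlined in the introduction, adapting \cite[Proposition 3.1]{HP24}. First we reduce to the strongly $\Q$-factorial case. By Theorem \ref{t-3} there is a small projective modification $\mu:X'\to X$ with $X'$ strongly $\Q$-factorial, and $(X',B'+\bbeta)$ is generalized klt with $K_{X'}+B'+\bbeta_{X'}=\mu^*\alpha$. The class $\mu^*\alpha$ is nef and big. If it were K\"ahler, then $\mu$ would contract no curves and would be an isomorphism, so $\alpha$ would be K\"ahler. Any $\mu^*\alpha$-trivial rational curve on $X'$ maps either to a point or to an $\alpha$-trivial rational curve on $X$. In the first case the fibers of $\mu$ are rationally chain connected (Lemma \ref{l-kltrcc}), and any curve in a fiber is $\alpha$-trivial; so it suffices to find a $\mu^*\alpha$-trivial rational curve not contracted by $\mu$, or else a rational curve contained in a positive dimensional fiber of $\mu$. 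It is enough to do this upstairs, so we may assume $X$ is strongly $\Q$-factorial. Since $\alpha$ is big and $K_X$ differs from $\alpha$ by $-(B+\bbeta_X)$, we will also use Lemma \ref{l-kltbig} where needed to arrange $\bbeta$ to dominate a K\"ahler form on a K\"ahler resolution.

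Next, by Theorem \ref{t-null}, $\mathrm{Null}(\alpha)=E^{as}_{nK}(\alpha)$ is a proper analytic subset. It is nonempty: otherwise a K\"ahler current in $\alpha$ with weak analytic singularities would be smooth K\"ahler off an empty set, and $\alpha$ would be K\"ahler. Choose an irreducible component $Z$ of maximal dimension $d>0$, so that $\int_Z\alpha^d=0$. Let $\pi:Z'\to Z$ be a K\"ahler resolution, which exists since $Z\subset X$ is in class $\mathcal C$. The goal is to show that $Z'$ is uniruled, with the MRC fibration $\varphi:Z'\dashrightarrow W$ having general fiber $F$ such that $\pi^*\alpha|_F$ is not big. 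Then $F$ is a smooth projective rationally connected variety. Writing $\pi^*\alpha|_F\equiv L$ with $L$ nef but not big, we argue as in the end of the proof of Theorem \ref{t-raynonbig}, via bend and break \cite[Theorem 5.8]{Kol96} applied to a curve with $K_F\cdot C<0$ and $L\cdot C=0$. This produces $L$-trivial rational curves covering $F$, and their images in $X$ are $\alpha$-trivial rational curves.

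The main obstacle is proving uniruledness of $Z'$ together with the non-bigness of $\alpha$ on the MRC fibers. In \cite{HP24} this uses an ambient K\"ahler class and intersection theory, which are unavailable here. Our plan is as follows. Take a log resolution $\nu:X''\to X$ extracting a divisor $E$ dominating $Z$, for instance the blow-up of $Z$ followed by a resolution. Run a relative MMP over $X$ via Proposition \ref{c-relmmp} or Proposition \ref{p-relmmp}, with $\bbeta\equiv_X$ a b-divisor by Lemma \ref{l-rel}, to reach a model on which $E$ is the only exceptional divisor over the generic point of $Z$. Adjunction to $E$ then gives a generalized pair on $E$ whose log canonical class is $\nu^*\alpha|_E\equiv_Z 0$ plus a negative multiple of $E|_E$, which is relatively ample. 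By Lemma \ref{l-pltrcc}, the general fibers of $E\to Z$ are rationally connected.

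The MMP converts the relative condition ``$\nu^*\alpha|_E$ is numerically trivial over $Z$'' into a genuine pullback of the class $\alpha|_Z$. Bigness of $\alpha$ together with $\alpha^d\cdot Z=0$ then lets us apply \cite[Theorem 2.2]{HP24}: $K_{E}$ plus a pseudoeffective class that is not big along $Z$ fails to be pseudoeffective, so $Z'$ is uniruled. Finally, on the MRC fibers, \cite[Theorem 5.2]{CH20} (used exactly as in Theorem \ref{t-raynonbig}) shows that $\alpha|_F$ is not big. This is the step I expect to require the most care.
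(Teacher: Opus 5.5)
Your outline (a maximal component $Z$ of $\mathrm{Null}(\alpha)$, uniruledness of a K\"ahler resolution $Z'$, MRC fibres on which $\alpha$ is not big, then bend and break) matches the paper. However, the step you single out as the main obstacle has a genuine gap: the divisor you adjoin to carries the wrong class.

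Since $(X,B+\bbeta)$ is generalized klt, a divisor $E$ over $Z$ obtained by blowing up $Z$ has coefficient $b<1$ in $B''$, where $K_{X''}+B''+\bbeta_{X''}=\nu^*\alpha$. Adjunction therefore computes $(K_{X''}+E+(B''-bE)+\bbeta_{X''})|_E=\nu^*\alpha|_E+(1-b)E|_E$. This adjoint class is not pulled back from $Z$ (the term $\nu^*\alpha|_E$ already is; the problem is the extra term). When $-E|_E$ is relatively ample the adjoint class is anti-ample over $Z$. So no relative MMP turns it into a pullback of $\alpha|_Z$, and no relative positivity result applies to it. Absorbing $(1-b)E|_E$ into a nef moduli part would instead force you to add a positive class from $Z$, which is the wrong direction for the final contradiction. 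Relatedly, bigness of $\alpha$ never actually enters your argument.

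The missing idea is to \emph{create} an lc place over $Z$ by adding a boundary that is numerically a multiple of $\alpha$ and is singular exactly along $\mathrm{Null}(\alpha)$. Theorem \ref{t-null} gives a K\"ahler current $\eta\in\alpha$ with weak analytic singularities whose Lelong set is $\mathrm{Null}(\alpha)$. On a log resolution $\nu:X'\to X$ write $\nu^*\eta=\eta'+D$ with $D\ge 0$ and $\eta'\ge\nu^*g$ for a Hermitian form $g$. Take the log canonical threshold of $(X,B+\bbeta)$ with respect to $\overline{\eta'}+\nu_*D$ near the general point of $Z$. Then tie-break with a perturbation $\nu^*\alpha\equiv\omega'+G$ ($\omega'$ K\"ahler, $G\ge 0$ with very general coefficients), so that for the resulting threshold $c'$ there is a unique lc place $S$ over $Z$. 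With $\Gamma'=B_{X'}+c'((1-\epsilon)D+\epsilon G)$ and $\ggamma=\bbeta+c'\,\overline{(1-\epsilon)\eta'+\epsilon\omega'}$ one gets $K_X+\Gamma+\ggamma_X\equiv(1+c')\alpha$. Adjunction then gives $K_S+\Delta_S+\ddelta_S\equiv(1+c')f^*\alpha_{Z'}$ with $\ddelta_S$ K\"ahler, and this K\"ahler part is what lets one subtract $f^*\omega_{Z'}$.

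From there, the relative MMP on $S$ over $Z'$ (Proposition \ref{p-relmmp}, after removing $\Delta_S^{\ge 1}$ and $\Delta_S^{<0}$) produces $S'\to Z''\to Z'$ on which the adjoint class is pulled back from $Z''$. This is what makes \cite[Theorem 2.2]{HP24} applicable. In the paper that result is used as a \emph{pseudo-effectivity} statement for $K_{S''/Z'''}+\tilde\Delta_{S''}^{\ge 0}+\tilde\ddelta_{S''}$, not as the non-pseudo-effectivity statement you describe. If $Z'$ were not uniruled, $K_{Z'}$ would be pseudo-effective \cite{Ou25}. Pushing forward to the normal divisor $\bar S$ on a dlt model of $(X,\Gamma+\ggamma)$ (Theorem \ref{t-dltmodel}) then shows that $(1+c')\alpha|_{Z^\nu}$ minus a big class is pseudo-effective. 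Hence $\alpha|_Z$ is big, contradicting $Z\subset\mathrm{Null}(\alpha)$. The same construction also gives the non-pseudo-effectivity of $K_{Z'}+t\alpha_{Z'}+\omega_{Z'}$, which you need before invoking \cite[Theorem 5.2]{CH20} on the MRC fibres.

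There are also smaller problems:
\begin{itemize}
\item Your reduction to the strongly $\Q$-factorial case is invalid. When $\mu$ is not an isomorphism, rational curves in fibres of $\mu$ already satisfy the conclusion on $X'$ but give no curve on $X$. This matters because Theorem \ref{t-contK} applies the result on the non-$\Q$-factorial base of a flipping contraction. The reduction is also unnecessary.
\item Lemma \ref{l-kltbig} needs $B+\bbeta_X$ modified big, which is not assumed here.
\item Proposition \ref{c-relmmp} needs a compact K\"ahler base, which $X$ is not.
\end{itemize}
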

\begin{proof} 
 %Replacing $\bbeta$ by $\bbeta+\epsilon \bar \alpha$ we may assume that $\bbeta _X$ is big.
 By Theorem \ref{t-null} (see also \cite[Theorem 1.1]{CT15} and \cite[Theorem 3.17]{Bou04} in the smooth case) the restricted non-K\"ahler locus
${ E_{nK}^{as}} (\alpha)$ coincides with the null-locus  ${\rm Null}(\alpha)$, and 
there exists a K\"ahler current $\eta$ with weak analytic singularities in the class $\alpha$ such that
the Lelong set coincides with ${ E_{nK}^{as}} (\alpha)$. 
Since $\alpha $ is not K\"ahler, then ${\rm Null}(\alpha)$ has a positive dimensional component.
We may pick a log resolution $\nu :X'\to X$ such that $K_{X'}+B_{X'}+\bbeta _{X'}=\nu ^*(K_X+B+\bbeta _X)$ where $\bbeta _{X'}$ is nef and
$\nu ^*\eta=\eta' +E$ where $E$ is an effective $\R$-divisor, $\eta'$ is a nef current such that $\eta'\geq \nu ^*g$ where $g$ is a positive definite real (1,1)-form.

Let $Z$ be a maximal dimensional irreducible component of ${\rm Null}(\alpha)$ and $c$ be the log canonical threshold of $(X,B+\bbeta)$ with respect to   $\overline{\eta'}+\nu_*E$ on a neighborhood of general points of $Z$. We can assume that  $E+B_{X'}$ has simple normal crossings, then $Z$ is an irreducible component of $\nu ((B_{X'}+cE)^{=1})$ and $Z$ is not contained in $\nu ((B_{X'}+cE)^{>1})$. %\sout{We may also assume that $\nu$ is given by a sequence of blow ups along smooth centers and in particular there is an effective $\nu$-exceptional $\R$-divisor $F$ such that $-F$ is relatively ample.} 
We may write $\nu ^*\alpha \equiv \omega '+G$ where $\omega '$ is K\"ahler and $G\geq 0$ is an effective $\R$-divisor. We may replace $\omega '$ and $G$ by a small perturbation so that the coefficients of $G$ are very general.
We may pick $0<\epsilon \ll 1$ such that if \[c'={\rm sup}\{ t|Z\not\subset \nu ((B_{X'}+t((1-\epsilon)E+\epsilon G))^{\geq 1}) \}\]
then $Z$ is the image of a unique irreducible component $S$ of $(B_{X'}+c'((1-\epsilon)E+\epsilon G))^{=1}$ and is not contained in $\nu(B_{X'}+c'((1-\epsilon)E+\epsilon G))^{>1}$.

We let $\ggamma =\bbeta+c'\left( \overline{(1-\epsilon)\eta'+\epsilon \omega ' }\right)$ and $\Gamma '=B_{X'}+c'((1-\epsilon)E+\epsilon G)$. Then we have a generalized pair $(X,\Gamma +\ggamma)$ where $\Gamma=\nu _*\Gamma'$ and  
 $K_X+\Gamma+\ggamma _X\equiv (1+c')\alpha$. Let $Z^\nu\to Z$ be the normalization and $g:Z'\to Z^\nu$ be a resolution, $\alpha_{Z^\nu}=\alpha |_{Z^\nu}$ and $\alpha _{Z'}=\alpha |_{Z'}$. We may assume that $Z'$ is K\"ahler and $S\to Z$ factors through $f:S\to Z'$. 
\begin{center}
\begin{tikzcd}
S \arrow[d, hook] \arrow[r, "f"] & Z' \arrow[r, "g"] & Z^\nu \arrow[r] & Z \arrow[d, hook] \\
X' \arrow[rrr, "\nu"'] & & & X
\end{tikzcd}
\end{center}
We may also assume that $X'\to X$ factors through $\mu :X'\to \bar X$ where $\rho:\bar X\to X$ is a dlt model (in the sense of Theorem \ref{t-dltmodel}) for $(X,\Gamma+\ggamma  )$. In particular, \[K_{\bar X}+\bar \Gamma+\ggamma_{\bar X}=\rho ^*(K_X+\Gamma +\ggamma _X)\equiv (1+c')\bar \alpha\] where $\bar \alpha =\rho ^*\alpha$, $\bar \Gamma =\mu _*\Gamma' \geq 0$, and $(\bar X,\rho ^{-1}_*(\Gamma\wedge {\rm Supp}(\Gamma)) +{\rm Ex}(\rho))$ is dlt. By our construction, we have that if $\bar S:=\mu _* S$, then $\bar S\ne 0$ is normal, and hence $\bar f:\bar S\to Z^\nu$ is a morphism.
 We write
\[K_S+\Delta _S+\ddelta _S=(K_{X'}+\Gamma '+\ggamma_{X'} )|_{S}\]
where $\Delta _S=(\Gamma '-S)|_S$ and $\ddelta _S=(\ggamma _{X'})|_{S}$. We let $K_{\bar S}+\Delta _{\bar S}+\ddelta _{\bar S}=\mu _*(K_S+\Delta _S+\ddelta _S)$.
% {\color{blue} Since $K_S+\Delta _S+\ddelta _S\equiv (1+c')f^*\alpha_{Z'}$ and $(S,\Delta _S+\ddelta _S)$ is sub-klt over general points of $Z$, then 
% \[K_S+\Delta _S+\ddelta _S=f^*(K_{Z'}+\Delta _{Z'}+\rrho _{Z'})\]
% where $\Delta _{Z'}$ is the boundary part. By \cite[Proposition 1.16]{HP24} we may assume that $\rrho _{Z'}$ descends to $Z'$
Let  $\tilde \Delta _S=\Delta _S-\Delta _S^{\geq 1}+\Delta _S^{<0}$, then $(S,\tilde \Delta _S)$ is klt, and
\[ K_S+\tilde \Delta _S+\ddelta _S\equiv (1+c')f^*\alpha _{Z'} -\Delta _S^{\geq 1}+\Delta _S^{<0}.\]
Note that $\Delta _S^{\geq 1}$ does not dominate $Z'$, $\ddelta _S$ is K\"ahler, and general fibers of $S\to Z$ are rationally connected (cf. Lemma \ref{l-pltrcc}).  By \cite[Theorem 3.1]{CH24}, $f$ is a projective morphism. %Thus $-(K_S+\Delta _S)\equiv _{Z'}\ddelta _S$ is ample over $Z'$} and hence $f$ is a projective morphism. 

Since $\Delta _S^{\geq 1}$ does not dominate $Z'$, then $K_S+\tilde \Delta _S+\ddelta _S$ is pseudo-effective over $Z'$ and since $ \ddelta _S\equiv _{Z'}-(K_S+\tilde \Delta _S)-\Delta _S^{\geq 1}+\Delta _S^{<0}$, then by Proposition \ref{p-relmmp} there is a good minimal model $\phi:S\dasharrow S'$ for $K_S+\tilde \Delta _{S}+\ddelta _{S}$ over $Z'$, and therefore, we may assume that $K_{S'}+\tilde \Delta _{S'}+\ddelta _{S'}$ is nef over $Z'$ and there is a log canonical model $f'':S'\to Z''$ over $Z'$. Since $\Delta _S^{<0}|_{S_z}$ is $S_z\dasharrow \bar S_z$ exceptional for general $z\in Z'$, then $h:Z''\to Z'$ is birational. Then $K_{S'}+\tilde \Delta _{S'}+\ddelta _{S'}=(f'')^*\lambda ''$ where $\lambda ''\in H^{1,1}_{\rm BC}(Z'')$ is K\"ahler over $Z'$. Let $f':S'\to Z'$ be the induced morphism.  %\[K_{S'}+\Gamma _{S'}+\ddelta _{S'}=(f'')^*(K_{Z''}+\Gamma _{Z''}+ \rrho _{S''}) .\]
\begin{center}

\begin{tikzcd}
	{S'} && {Z''} \\
	& {Z'}
	\arrow["{f''}", from=1-1, to=1-3]
	\arrow["{f'}"', from=1-1, to=2-2]
	\arrow["h", from=1-3, to=2-2]

\end{tikzcd}
\end{center}
We claim that $K_{S'/Z'}+\tilde \Delta _{S'}+\ddelta _{S'}-(f')^*\omega _{Z'}$ is pseudo-effective for some K\"ahler form $\omega _{Z'}$. To see this, we first observe that $\ddelta _S$ is K\"ahler, so we may assume that $\ddelta _S- f^*\omega _{Z'}$ is a K\"ahler class. Let $\tilde \ddelta $ be $\ddelta -\overline{f^*\omega _{Z'}}$, it suffices to show that $K_{S'/Z'}+\tilde \Delta _{S'}+\tilde \ddelta _{S'}$ is pseudo-effective.
Let $Z'''\to Z''$ be a resolution and $\pi: S''\to S'$ a resolution such that the induced map $f''':S''\to Z'''$ is a morphism.  We write $K_{S''}+\tilde \Delta _{S''}+\tilde\ddelta _{S''}=\pi^*(K_{S'}+\tilde \Delta _{S'}+\tilde\ddelta _{S'})\equiv _{Z''}0$. Then \[K_{S''}+\tilde \Delta _{S''}^{\geq 0}+\tilde\ddelta _{S''}\equiv _{Z'''}\tilde{\Delta}_{S''}^{\leq 0}.\]
By \cite[Theorem 2.2]{HP24} (see also \cite[Theorem 2.47]{HX26} for the $\R$-divisor case) 
$K_{S''/Z'''}+\tilde \Delta _{S''}^{\geq 0}+\tilde\ddelta _{S''}$ is pseudo-effective. Since $K_{Z'''/Z'}\geq 0$, then $K_{S''/Z'}+\tilde \Delta _{S''}^{\geq 0}+\tilde\ddelta _{S''}$ is pseudo-effective. Since $\tilde \Delta _{S''}^{\leq 0}$ is $\pi$-exceptional, 
$K_{S'/Z'}+\tilde \Delta _{S'}+\tilde\ddelta _{S'}=\pi _*(K_{S''/Z'}+\tilde \Delta _{S''}^{\geq 0}+\tilde\ddelta _{S''})$ is pseudo-effective.

If $Z'$ is not uniruled, then $K_{Z'}$ is pseudo-effective (cf. \cite{Ou25}) and so  $K_{S'}+\tilde \Delta _{S'}+ \ddelta _{S'}-(f')^*\omega _{Z'}$ is pseudo-effective and hence so is $K_{S}+\tilde \Delta _{S}+\Delta _S^{\geq 1}+ \ddelta _{S}-f^*\omega _{Z'}$ (since $\phi$ is a $K_{S}+\tilde \Delta _{S}+ \ddelta _{S}$ mmp over $Z'$ and $\Delta _S^{\geq 1}$ is effective).

Pick $\eta _{Z^\nu}$ a big $(1,1) $ class on $Z^\nu$ such that $\omega _{Z'}-g^*\eta _{Z^\nu}$ is pseudo-effective, then \[K_{\bar S}+\Delta _{\bar S} +\ddelta _{\bar S}-\bar f ^*\eta _{Z^\nu}=(\mu |_S)_*( K_{S}+\Delta _{S}^{\geq 0}+ \ddelta _{ S}-f^*g^*\eta _{Z^\nu})\]
is pseudo-effective. Since \[K_{\bar S}+\Delta _{\bar S} +\ddelta _{\bar S}-\bar f ^*\eta _{Z^\nu}=\bar{f}^*((1+c')\alpha_{Z^\nu}- \eta _{Z^\nu})\] it follows that $(1+c')\alpha _{Z^\nu}- \eta _{Z^\nu}$ is pseudo-effective and hence $\alpha _{Z^\nu}$ is big which is impossible.

Therefore, $Z'$ is uniruled.
Let $\psi:Z'\dasharrow Y$ be the MRC fibration. We may assume that $\psi$ is a morphism with general fiber $F$. Note that $F$ is smooth and rationally connected, hence projective. By an argument similar to the one above, one sees that $K_{Z'}+t\alpha_{Z'}+\omega _{Z'}$ is not pseudo-effective for any $t>0$ either. Since $Y$ is not uniruled, then $K_Y$ is pseudo-effective and so $K_{Z'/Y}+t\alpha_{Z'}+\omega _{Z'}$ is not pseudo-effective. By \cite[Theorem 5.2]{CH20}, $K_F+t\alpha _F+\omega _{Z'}|_F$ is not pseudo-effective and so $\alpha |_F$ is nef but not big. 
Following the proof of Theorem \ref{t-raynonbig}, we see that $F$ is covered by $\alpha _F$-trivial rational curves and hence so is $Z$.

%We will now run the $\alpha_F$-trivial $K_F+\omega _F$ mmp where $\omega _F=\omega _{Z'}|_F$. Note that $F$ is projective and hence the required mmp results hold by \cite{DH24}. Since $K_F+t\alpha _F+\omega _F$ is not pseudo-effective, it is not nef and hence there is an $\alpha$-trivial  $K_F+\omega _F$-negative extremal ray. After finitely many flips or divisorial contractions, we may assume that this extremal ray induces a Mori fiber space. 
%Let $\Upsilon $ be the face of $\overline{\rm NE}(F)$ defined by the $K_F+\omega _{Z'}|_F$ negative, $\alpha$-trivial extremal rays and $F\to \bar F$ the corresponding contraction. If $F\to \bar F$ is birational, then $K_F+\omega _{Z'}|_F+t\alpha_F$ is big for some $t>0$, a contradiction. Thus $F$ is covered by $\alpha $-trivial %$K_F+\omega _{Z'}|_F$ negative curves and hence so is $Z$.
\end{proof}
\begin{theorem} 
\label{t-raynklt} 
Let $X$ be a compact complex analytic variety in Fujiki's class $\mathcal C$ of dimension $n$ such that $(X,B+\bbeta)$ is a generalized pair with non-klt locus $W$, $\alpha:=[K_X+B+\bbeta _X]$ is nef and big but not K\"ahler, and $\alpha |_{W}$ is K\"ahler. Then there is an $\alpha $-trivial  rational curve $C$ not contained in $W$.
\end{theorem}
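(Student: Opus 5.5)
The plan is to follow the proof of Theorem \ref{t-ray} almost verbatim. The one new ingredient is that the hypothesis $\alpha|_W$ K\"ahler forces every positive dimensional component of ${\rm Null}(\alpha)$ to lie outside $W$.

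\emph{Step 1: find a component of the null locus not contained in $W$.} By Theorem \ref{t-null}, ${\rm Null}(\alpha)=E^{as}_{nK}(\alpha)$ is analytic, and since $\alpha$ is not K\"ahler it has a positive dimensional component. Let $V\subset X$ be irreducible and positive dimensional with $\int_V\alpha^{\dim V}=0$. If $V\subset W$, then $\alpha|_V=(\alpha|_W)|_V$ is K\"ahler and $\int_V\alpha^{\dim V}>0$, a contradiction. Hence no positive dimensional component of ${\rm Null}(\alpha)$ lies in $W$. Let $Z$ be a component of maximal dimension among them. Then $Z\not\subset W$, so $(X,B+\bbeta)$ is generalized klt near the general point of $Z$.

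\emph{Step 2: build the lc center and the dlt model.} Take a K\"ahler current $\eta\in\alpha$ with weak analytic singularities whose Lelong set is ${\rm Null}(\alpha)$, and a log resolution $\nu:X'\to X$ with $\nu^*\eta=\eta'+E$ and $\nu^*\alpha\equiv\omega'+G$. As before, define the threshold $c'$, but compute it only on a neighborhood of the general point of $Z$, where the pair is klt. After perturbing $G$ and choosing $0<\epsilon\ll1$, we obtain a unique divisor $S$ with coefficient one in $\Gamma'=B_{X'}+c'((1-\epsilon)E+\epsilon G)$ and $\nu(S)=Z$. The other non-klt places of $(X,\Gamma+\ggamma)$ lie over $W$ or over ${\rm Null}(\alpha)\setminus Z$, and do not dominate $Z$. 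Theorem \ref{t-dltmodel} is stated for arbitrary generalized pairs, so it gives a dlt model $\rho:\bar X\to X$ with $\bar S=\mu_*S$ normal.

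\emph{Step 3: run the argument of Theorem \ref{t-ray} on $S$.} Over the general point of $Z$ the pair is plt along $S$, so Lemma \ref{l-pltrcc} (applied locally over an open set of $Z$ avoiding $W$) shows that the general fibers of $S\to Z$ are rationally connected. By \cite[Theorem 3.1]{CH24}, $f:S\to Z'$ is projective. Define $\tilde\Delta_S$ as before. The components of $\Delta_S^{\geq1}$ coming from $W$ do not dominate $Z'$, because $Z\not\subset W$. So Proposition \ref{p-relmmp} again gives a good minimal model over $Z'$, and \cite[Theorem 2.2]{HP24} gives pseudo-effectivity of $K_{S'/Z'}+\tilde\Delta_{S'}+\ddelta_{S'}-(f')^*\omega_{Z'}$. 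If $Z'$ were not uniruled, pushing forward to $\bar S$ would make $\alpha_{Z^\nu}$ big, contradicting $Z\subset{\rm Null}(\alpha)$. Hence $Z'$ is uniruled. Take the MRC fibration of $Z'$ with general fiber $F$. Then $\alpha|_F$ is nef but not big, and the argument of Theorem \ref{t-raynonbig} covers $F$ by $\alpha$-trivial rational curves. The images of these curves are $\alpha$-trivial rational curves covering $Z$. Since $Z\not\subset W$, a general such curve is not contained in $W$.

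The main obstacle is Step 3: controlling the extra non-klt contributions from $W$ in the boundaries $\Delta_S$ and $\Delta_{\bar S}$. Pseudo-effectivity and the pushforward to $\bar S$ only use that these components are effective or vertical over $Z'$. I would first check that every such component maps into $W\cap Z\subsetneq Z$, so that it is vertical over $Z'$, and then absorb it into $\Delta_S^{\geq1}$ exactly as in the proof of Theorem \ref{t-ray}.
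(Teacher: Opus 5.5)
Your proposal is correct and is essentially the paper's proof. The paper likewise notes that $\alpha|_W$ K\"ahler forces every component of ${\rm Null}(\alpha)$ to lie outside $W$, takes a maximal-dimensional component $Z$, and then simply invokes the proof of Theorem \ref{t-ray}. Your extra bookkeeping makes explicit what the paper leaves implicit: you localize Lemma \ref{l-pltrcc} away from $W$, observe that the non-klt components over $W$ are vertical over $Z'$ and are absorbed into $\Delta_S^{\geq 1}$, and note that a general covering curve meets $Z\setminus W$.
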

\begin{proof} The proof is the same as above. Suppose that ${\rm Null}(\alpha)\ne \emptyset$. Since $\alpha |_{W}$ is K\"ahler then no component of ${\rm Null}(\alpha)$ is contained in $ W$. Let $Z$ be a maximal dimensional irreducible component of ${\rm Null}(\alpha)$, then the proof of Theorem \ref{t-ray} applies. 
\end{proof}
\begin{lemma}\label{l-nefpb} Let $X$ be a  compact complex analytic variety in Fujiki's class $\mathcal C$ and $\nu :X'\to X$ a birational morphism such that $X'$ is K\"ahler and $X$ has rational singularities. If  $\alpha \in H^{1,1}_{\rm BC}(X)$ and $\nu ^*\alpha$ is nef, then $\alpha$ is nef. \end{lemma}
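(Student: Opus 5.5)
The plan is to verify nefness of $\alpha$ directly from the definition: for a fixed positive $(1,1)$-form $\omega$ on $X$ and every $\varepsilon>0$, produce a smooth representative of $\alpha$ that is $\geq -\varepsilon\omega$. Since $\nu^*\alpha$ is nef on the K\"ahler variety $X'$, it lies in the closure of the K\"ahler cone. So for every $\varepsilon>0$ the class $\nu^*\alpha+\varepsilon\omega'$ is K\"ahler, where $\omega'$ is a fixed K\"ahler form on $X'$. The task is to push this positivity down to $X$, which needs a Bott--Chern comparison between $X'$ and $X$.

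First I compare Bott--Chern groups. Since $X$ has rational singularities and $X'$ is K\"ahler with (we may assume, after replacing $X'$ by a resolution that is still K\"ahler) rational singularities, we have $\nu_*\OO_{X'}=\OO_X$ and $R^i\nu_*\OO_{X'}=0$ for $i>0$, and $\nu_*\mathcal H_{X'}=\mathcal H_X$ by \cite[Lemma 2.5]{DH24}. Arguing as in Lemma \ref{l-rel}, classes pulled back from $X$ are exactly those whose restriction to small neighborhoods of fibers is trivial. Next I choose a smooth representative $a$ of $\alpha$ with local potentials, and a positive $(1,1)$-form $\omega$ on $X$ (locally the restriction of a Hermitian form from an ambient $\mathbb C^N$). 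Note that $\nu^*\omega$ is semipositive and strictly positive off ${\rm Ex}(\nu)$.

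The key step is to exploit the fact that $\omega$ has local psh-type potentials: locally on $X$ we may write $\omega=i\partial\bar\partial\phi$ with $\phi$ smooth and strictly psh on the ambient space. Because $\nu^*\alpha$ is nef, there are functions $f'_\varepsilon$ on $X'$ with $\nu^*a+i\partial\bar\partial f'_\varepsilon\ge-\varepsilon\omega'$. The goal is to replace $f'_\varepsilon$ by a function pulled back from $X$. For this I try the pushforward of currents. The current $T_\varepsilon:=\nu_*(\nu^*a+i\partial\bar\partial f'_\varepsilon+\varepsilon\omega')$ is a closed positive current in $\alpha+\varepsilon\nu_*[\omega']$. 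Since $\omega'\le C\nu^*\omega'' $ fails in general, I instead take $\omega'$ of the form $\nu^*\omega-\delta F$ (with $-F$ $\nu$-ample, as in the Remark after Lemma \ref{l-kltbig}), so that $\nu_*\omega'=\omega$ as classes. This yields a positive current in $\alpha+\varepsilon[\omega]$ on $X$ for every $\varepsilon$, i.e. $\alpha$ is ``modified nef''. One then regularizes: apply Demailly regularization on $X$ (locally on an embedding, with gluing as in Fujiki-class arguments), using that the Lelong numbers of $\nu^*$ of these currents can be made small because they come from smooth forms on $X'$ up to $\varepsilon$.

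I expect the main obstacle to be exactly this last step: passing from positive currents with small Lelong numbers on a singular $X$ to smooth forms bounded below by $-\varepsilon\omega$. If regularization on $X$ is awkward, my fallback is a restriction argument. I would show nefness via the characterization ``$\alpha|_V$ has nonnegative degree and is pseudo-effective on every subvariety'', arguing that for each subvariety $V\subset X$ some subvariety $V'\subset X'$ dominates it, so that $\alpha|_V$ pulls back to the nef class $\nu^*\alpha|_{V'}$. I would then use the Demailly--P\u{a}un type criterion, valid in class $\mathcal C$ as in \cite{HP24}.
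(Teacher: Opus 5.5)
\textbf{Main route.} Your primary route does not go through as written. The choice $\omega'=\nu^*\omega-\delta F$ needs two things you do not have. First, $\omega$ would have to be a closed (K\"ahler) form on $X$. Second, $-F$ would have to be $\nu$-ample, which means $\nu$ is projective. In the lemma, $\omega$ is only a Hermitian form on a space in class $\mathcal C$, and $X$ is not known to be K\"ahler. Indeed, the lemma is applied in the proof of Theorem \ref{t-contK} to a base that is not yet known to be K\"ahler. So there is no class $[\omega]$ with $\nu_*[\omega']=[\omega]$ to work with.

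Even granting such currents, a positive current in $\alpha+\varepsilon(\text{fixed class})$ for every $\varepsilon$ only gives pseudo-effectivity. Upgrading this to smooth representatives $\ge -\varepsilon\omega$ requires a regularization theorem with Lelong-number control on a singular, possibly non-K\"ahler space. You rightly flag this as the obstacle, but you do not supply it.

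\textbf{Fallback route.} Your fallback is the route the paper takes. By \cite[Remark 2.37]{DHP24}, $\alpha$ is nef if and only if $\alpha|_{Z^n}$ is pseudo-effective for every irreducible $Z\subset X$ with normalization $Z^n$. However, your sketch skips the only nontrivial step. Knowing that the pullback of $\alpha|_Z$ to a dominating $Z'\subset X'$ is nef does not by itself make $\alpha|_{Z^n}$ pseudo-effective when $\mu:Z'\to Z^n$ has positive-dimensional fibers. That is exactly the case that matters, namely $Z\subset \nu({\rm Ex}(\nu))$. Nor can you in general cut $Z'$ down to a generically finite multisection in this non-projective analytic setting.

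The missing idea is fiber integration against the K\"ahler structure of $X'$. Take $Z'$ to be the normalization of a component of $\nu^{-1}(Z)$ dominating $Z$, set $d=\dim Z'-\dim Z$, fix a K\"ahler form $\omega$ on $Z'$, and consider $\mu_*(\mu^*\alpha\wedge\omega^d)$. For a smooth test form $\gamma$ on $Z^n$ one has $\int_{Z'}\mu^*\alpha\wedge\omega^d\wedge\mu^*\gamma=C\int_{Z^n}\alpha\wedge\gamma$, where $C=\int_F\omega^d>0$ and $F$ is a general fiber. Nefness of $\mu^*\alpha$ makes the left-hand side $\ge 0$ whenever $\gamma\ge 0$. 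Hence $\alpha|_{Z^n}=C^{-1}\mu_*(\mu^*\alpha\wedge\omega^d)$ is represented by a positive current, so it is pseudo-effective. This is precisely where the hypothesis that $X'$ is K\"ahler enters, and it is absent from your sketch.
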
 
\begin{proof} By \cite[Remark 2.37]{DHP24}, $\alpha$ is nef if and only if $\alpha|_{Z^n}$ is a pseudo-effective class for all irreducible analytic
subvarieties $Z \subset X$ with normalization $Z^n\to Z$.
Let $Z'$ be the normalization of a component of $\nu^{-1}(Z)$ that dominates $Z$, $d=\dim(Z'/Z)$, and $\mu:Z'\to Z^n$ the induced map. Fix $\omega$ a K\"ahler form on $Z'$, then $\psi:=\int _{Z'} (\nu ^*\alpha\wedge \omega ^d \wedge\ldots )$ is a $(d+1,d+1)$ current on $Z'$ such that
$\mu _* \psi =C \alpha |_{Z^n}$
 where $C=\int _F \omega ^d$ and $F$ is a general fiber of $Z'\to {Z^n}$. To see this, let $\gamma$ be any smooth $(\dim Z-1,\dim Z-1)$ form on ${Z^n}$, then  integrating along the fibers first gives
\[\int _{Z'} \mu ^*\alpha\wedge \omega ^d \wedge \mu ^*\gamma =C \int _{Z^n}\alpha \wedge \gamma .\]
Since  $\mu ^*\alpha$ is nef then if $\gamma\geq 0$, the corresponding integral is $\int _{Z^n}\alpha \wedge \gamma\geq 0$.
Hence $\alpha|_{Z^n}=\frac 1 C \mu _* \psi $  is a positive current and hence  $\alpha |_{Z^n}$ is pseudo-effective.

\end{proof}
%\begin{corollary} Let $(X,B+\bbeta)$ be a K\"ahler, compact generalized klt strongly $\Q$-factorial pair and $f:X\to Z$ be a flipping or divisorial contraction, then $Z$ is K\"ahler.\end{corollary}
We conclude this section with the proof of Theorem \ref{t-contK}.

    \begin{proof}[Proof of Theorem \ref{t-contK}] By Proposition \ref{p-nefsupp}, there is a $K_X+B+\bbeta _X$ negative extremal ray $\Gamma$, which is  cut out by a nef class $\alpha =K_X+B+\bbeta _X+\omega$ where $\omega $ is K\"ahler. By Theorem \ref{t-raynonbig}, if $\alpha$ is not big, then $X$ is covered by an $\alpha$-trivial family of curves $C_t$. Since $\alpha$ cuts an extremal ray $\Gamma$ of $\overline{\rm NA}(X)$, then $[C_t]\in \Gamma$. This is impossible as the curves in $\Gamma$ cover a proper closed subset of $X$ (by definition of flipping or divisorial contraction associated to an extremal ray $\Gamma$). Therefore,  we may assume that $\alpha$ is big. %YL: There may have a logic gap, Theorem \ref{t-raynonbig} only says that ``we can find \textbf{some} covering family of $\alpha$-trivial curves" if $\alpha$ is nef but not big. This does not contradict with there exists \textbf{a} negative extremal ray. Fortunately, $\alpha$ big is not used in the proof. Instead, we can argue directly $\alpha_Z$ is big, using Theorem \ref{t-raynonbig}, since on the base $Z$ there is not $\alpha$-trivial curve anymore!}

We first check that $\alpha =f^*\alpha _Z$ where $\alpha_Z\in H^{1,1}_{\rm BC}(Z)$ and $(Z,B_Z+\bbeta _Z+f_*\omega )$ is generalized klt where $B_Z=f_*B$. Since the question is local over $Z$, we may assume that $Z$ is relatively compact and Stein. 
Let $\nu :X'\to X$ be a resolution such that $f':X'\to Z$ is projective and $\bbeta $ descends to $X'$. Further shrinking $Z$, we may assume that $f'$ satisfies property (P). Write $K_{X'}+B'+\bbeta _{X'}=\nu ^*(K_X+B+\bbeta _X)$. Since $\bbeta _{X'}+\nu ^*\omega$ is nef and big over $Z$, we may assume that $\bbeta _{X'}+\nu ^*\omega\equiv \Delta  '$ where $\Delta'$ is an $\R$-divisor such that $(X',B'+\Delta ')$ is sub-klt and hence $(X,B+\Delta) $ is klt where $\Delta =\nu _*\Delta '$.  Let $X'\dasharrow X^+$ be the log canonical model for $(X',(B')^{\geq 0}+\Delta ')$ over $Z$ (cf. \cite{Fuj22}). Then $X\dasharrow X^+$ is the log canonical model for $(X,B+\Delta )$ over $Z$. Since $K_{X}+B+\Delta \equiv _Z0$, then $X^+=Z$ and in particular $K_{Z}+B_Z+\Delta_Z:=f_*(K_{X}+B+\Delta)$ is an $\R$-Cartier divisor such that $K_{X}+B+\Delta=f^*(K_{Z}+B_Z+\Delta_Z)$. It follows that $(Z,B_Z+\Delta _Z)$ is klt and in particular $Z$ has rational singularities. By \cite[Lemma 2.6.(1)]{DH24}, $\alpha =f^*\alpha _Z$
 where $\alpha_Z\in H^{1,1}_{\rm BC}(Z)$.

By Lemma \ref{l-nefpb}, $\alpha _Z$ is nef. Since $f$ contracts all $\alpha$-trivial curves, then $\alpha _Z\cdot C>0 $ for any curve $C\subset Z$. By Theorem \ref{t-ray}, $\alpha _Z$ is K\"ahler.
\end{proof}
\begin{corollary}\label{c-kahlermmp} Let $(X,B+\bbeta)$ be a compact K\"ahler strongly $\Q$-factorial generalized klt pair. If $f:X\dasharrow X'$ is a flip or divisorial contraction, then $X'$ is strongly $\Q$-factorial and K\"ahler. \end{corollary}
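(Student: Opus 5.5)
The plan is to split the statement into its two assertions, K\"ahlerness of $X'$ and strong $\Q$-factoriality of $X'$, and treat divisorial contractions and flips separately. I expect only the flip case to need a real argument.

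\emph{Divisorial contraction.} Here $X'=Z$ is the target of the contraction $f:X\to Z$. Theorem \ref{t-contK} applies directly, since the hypotheses include the bigness of $B+\bbeta_X$ needed there, and gives that $Z$ is K\"ahler. Strong $\Q$-factoriality of $Z$ follows from \cite[Lemma 2.5]{DH25}, exactly as in the lemma on preservation of gdlt singularities above. The key point is that the exceptional locus is an irreducible divisor and $\rho_{\rm BC}(X/Z)=1$.

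\emph{Flip.} Let $f:X\to Z$ be the flipping contraction and $f^+:X^+\to Z$ the flip, with $X'=X^+$.
\begin{enumerate}
\item By Theorem \ref{t-contK}, $Z$ is K\"ahler. Let $\omega_Z$ be a K\"ahler form on $Z$.
\item By definition of the flip, $K_{X^+}+B^++\bbeta_{X^+}$ is K\"ahler over $Z$, i.e.\ it is $f^+$-ample in the relative K\"ahler sense.
\item The standard relative argument then shows that $(K_{X^+}+B^++\bbeta_{X^+})+m(f^+)^*\omega_Z$ is a K\"ahler class for $m\gg 0$.
\end{enumerate}
This is the same step used at the end of the first paragraph of the proof of Proposition \ref{c-relmmp}, where we concluded that $X^+$ is K\"ahler because it is K\"ahler over a K\"ahler base.

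To carry out step 3, I would represent the relative K\"ahler class locally over a finite Stein cover of $Z$ by forms that are strictly positive on fibers. I would then glue these with a partition of unity, adding $\partial\bar\partial$-corrections of the potentials of $m\,\omega_Z$ pulled back. This is the usual proof that a relatively K\"ahler morphism onto a compact K\"ahler space has K\"ahler total space, and it uses the compactness of $Z$.

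Strong $\Q$-factoriality of $X^+$ again follows from \cite[Lemma 2.5]{DH25}. The idea is as follows. A divisorial sheaf $L^+$ on $X^+$ corresponds, via the isomorphism in codimension one, to a divisorial sheaf $L$ on $X$, and $L^{[m]}$ is a line bundle. Since $\rho_{\rm BC}(X/Z)=1$, we can write $L^{[m]}\equiv_Z \lambda(K_X+B+\bbeta_X)$, and twisting by a multiple of the $\Q$-Cartier adjoint class makes it $f$-trivial. The local base point free theorem over Stein opens of $Z$ (the klt condition is preserved by Lemma \ref{l-kltbig}-type perturbations) then shows that it descends to a $\Q$-Cartier sheaf on $Z$. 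Pulling back to $X^+$ yields $\Q$-Cartierness of $L^+$.

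The main obstacle is step 3 in the flip case, namely promoting ``K\"ahler over $Z$'' together with ``$Z$ K\"ahler'' to a global K\"ahler class on the possibly singular $X^+$. I would first try to cite the standard result directly (e.g.\ from \cite{DH25} or \cite{DHY23}) rather than reprove the gluing.
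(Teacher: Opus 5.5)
Your proposal follows the paper's proof essentially verbatim: strong $\Q$-factoriality comes from \cite[Lemma 2.5]{DH25}, K\"ahlerness of the target of the contraction comes from Theorem \ref{t-contK}, and in the flip case $X^+$ is K\"ahler because $K_{X^+}+B^++\bbeta_{X^+}$ is K\"ahler over the K\"ahler space $Z$. One correction: contrary to what you say, the corollary does not assume $B+\bbeta_X$ big, but this is harmless (the paper invokes Theorem \ref{t-contK} in the same way), because in that proof bigness is only needed for $B+\bbeta_X+\omega$ when Theorem \ref{t-raynonbig} is applied to $(X,B+\bbeta+\bar\omega)$, and that class is automatically big since $\omega$ is K\"ahler.
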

\begin{proof} The strong $\Q$-factoriality of $X'$ is shown in \cite[Lemma 2.5]{DH25}. If $f$ is a divisorial contraction, then $X'$ is K\"ahler by Theorem \ref{t-contK}. If $f$ is a flip, then let $g:X\to Z$ be the flipping contraction and $g':X'\to Z$ the flipped contraction. By Theorem \ref{t-contK}, $Z$ is K\"ahler. Since the class of $K_{X'}+B'+\bbeta _{X'}=f_*(K_X+B+\bbeta _X)$ is relatively K\"ahler over $Z$, then $X'$ is K\"ahler.
\end{proof}

\section{Proof of Theorem \ref{c-1}}%\begin{theorem}\label{c-1} Let $(X,B+\bbeta)$ be a compact generalized klt pair in Fujiki's class $\mathcal C$. If $X$ is not K\"ahler then either \begin{enumerate} \item $X$ contains a rational curve such that $-[C]\in \overline{\rm NA}(X)$, or \item there is a small $\Q$-factorial K\"ahler modification $\mu:X^{\rm qf}\to X$. \end{enumerate} \end{theorem}%\begin{corollary} Let $X$ be a compact Fujiki class $\mathcal C$ variety such that $(X,B+\bbeta)$ is generalized klt $K_X+B+\bbeta _X$ is nef and $(K_X+B+\bbeta _X)\cdot C>0$ for any curve $C\subset X$, then $X$ is K\"ahler.\end{corollary}

\begin{proof}[Proof of Theorem \ref{c-1}]

Since $(X,B+\bbeta)$ is generalized klt, $X$ has rational singularities. Let $\nu :X'\to X$ be a projective resolution such that $X'$ is K\"ahler, and $\omega '$ a K\"ahler class on $X'$ which is very general in $H^{1,1}_{\rm BC}(X')$. Let $B'=\nu ^{-1}_*B+(1-\epsilon){\rm Ex}(\nu)$, then \[K_{X'}+B'+\bbeta _{X'}=\nu^*(K_X+B+\bbeta _X)+G\] where $G\geq 0$ and ${\rm Supp}(G)$ is the set of $\nu$-exceptional divisors. Then the negative part of the divisorial Zariski decomposition  $N_{\sigma}(K_{X'}+B'+\bbeta _{X'}/X)=G$ (see Proposition \ref{p-negZariski}),  and the support of $N_{\sigma}(K_{X'}+B'+\bbeta _{X'}+\epsilon \omega ' /X)$ is  ${\rm Ex}(\nu)$ for any $0<\epsilon \ll 1$. Replacing $\omega '$ by a multiple, we may assume that $K_{X'}+B'+\bbeta _{X'}+ \omega ' $ is K\"ahler. 
 We will now run the $K_{X'}+B'+\bbeta _{X'}$ mmp with scaling of $\omega '$ over $X$.  
 %Let $X=\cup U_i$ be a finite cover by relatively compact Stein open subsets such that $\nu _i:U'_i\to U_i$ satisfies property (P) where $U_i'=\nu ^{-1}(U_i)$. 
 Suppose that $K_{X'}+B'+\bbeta _{X'}+t\omega '$ is nef over $X$, but $K_{X'}+B'+\bbeta _{X'}+t'\omega '$ is not nef over $X$ for any $t'<t$. 
 
 If $t=0$, then $K_{X'}+B'+\bbeta _{X'}$ is nef over $X$ and hence $G=N_{\sigma}(K_{X'}+B'+\bbeta _{X'}/X)=0$ so ${\rm Ex}(\nu)$ contains no divisors and in particular $\nu$ is a small birational morphism such that $X'$ is K\"ahler. %By Theorem \ref{t-3} there exists a small strongly $\Q$-factorial modification $\mu: X^{\rm qf}\to X'$ where $\mu$ is projective, and hence $X^{\rm qf}$ is K\"ahler and $X^{\rm qf}\to X$ is also a small strongly $\Q$-factorial modification.\Xieinline{Why are we doing this, isn't $X'$ already smooth in the beginning? Maybe we should put this argument in the end instead. YL: Yes I agree, it's already Q-factorial after the resolution.}
 Thus the claim follows from Theorem \ref{t-3}.
 
 We will henceforth assume that $t>0$ and we let $\psi :X'\to \bar X$ be the log canonical model for $K_{X'}+B'+\bbeta _{X'}+t\omega '$ over $X$ and  $F\subset \overline{\rm NA}(X')$ be the cone spanned by $\psi$ vertical curves.
  Since $X'$ is K\"ahler, $\psi$ is a non-trivial birational map, and in particular Moishezon so that $F\ne 0$.  Since $\omega '\in H^{1,1}_{\rm BC}(X')$ is very general, then $F=\R^+[C]$ is a $(K_{X'}+B'+\bbeta _{X'}+t\omega ')$-trivial ray in $\overline{\rm NA}(X')$ (see \cite[Lemma 7.1]{DHP24}).

  Suppose that $\R^+[C]$ is not an extremal ray in $\overline {\rm NA}(X')$. Then, since $B'+\bbeta _{X'}+t\omega '$ is big, it follows easily from the cone theorem (see Corollary \ref{c-conetheorem}) that we may write $[C]=\sum_{i=1}^k \sigma _i[\Sigma _i]+\gamma$ where $\sigma _i>0$, $\Sigma _i$ are curves spanning extremal rays in $\overline {\rm NA}(X')$ such that $(K_{X'}+B'+\bbeta _{X'}+t\omega ')\cdot \Sigma _i\leq 0$, and either $\gamma =0$ or 
 $\gamma \in \overline {\rm NA}(X')_{K_{X'}+B'+\bbeta _{X'}+t\omega ' >0}$. In particular $k\geq 1$. If $\nu _*\Sigma _i\ne 0$, then $0=[\nu _*C]=\sum \sigma _i\nu _*[\Sigma _i]+\nu _*\gamma$ and so \[ -[\nu _*\Sigma _i]=(\sum _{j\ne i}\frac {\sigma _j}{\sigma _i}\nu _*[\Sigma _j]+\frac {1}{\sigma _i}\nu_*\gamma)\in \overline {\rm NA}(X)\] and we are in Case (1) of the theorem.
Therefore, we may assume that $\nu _*\Sigma _i=0$ for all $1\leq i\leq k$.

 Since $K_{X'}+B'+\bbeta _{X'}+t\omega '$ is nef over $X$, and $\nu _*\Sigma_i =0$, then $(K_{X'}+B'+\bbeta _{X'}+t\omega ')\cdot \Sigma_i =0$ for all $i$. It then follows that $\R^+[C]=\R^+[\Sigma _i]$  by our choice of $\omega '$ and hence $\R^+[C]$ is an extremal ray in $\overline {\rm NA}(X')$, contradicting our assumption.

Suppose $\R^+[C]$ is an extremal ray in $\overline {\rm NA}(X')$. Furthermore we can assume that any irreducible $\Sigma \in \R^+ [C]$ is $\nu$-vertical, otherwise we are in Case (1) again by the above proof.   %There are two contractions: (1) $\psi:X'\to\bar{X}$, which contracts all $\nu$-vertical curves in $F$, and (2) the K\"ahler contraction $c_F:X'\to X''$, which contracts all curves in $F$. A priori, these are not the same. We need to consider the case where some $[\Sigma]=\lambda[C]$ but $\nu_*\Sigma\ne 0$; then $[\nu_*\Sigma]=\lambda[\nu_*C]=0$, so $-[\nu_*\Sigma]=0\in\overline{\mathrm{NA}}(X)$. If all the curves are $\nu$-vertical, then $c_R$ and $\psi$ coincide.}. 
 We note that since $K_{X'}+B'+\bbeta _{X'}+s\omega '$ is K\"ahler over $X$ for $s>t$, then $F$ is $(K_{X'}+B'+\bbeta _{X'})$-negative and $-(K_{X'}+B'+\bbeta _{X'})$ is K\"ahler over $\bar X$. 
 It follows that $\psi$ is a flipping contraction,  and hence $\bar X$ is K\"ahler (see Theorem \ref{t-contK}) and in fact $K_{\bar X}+\bar B+\bbeta _{\bar  X}+t\bar \omega =\psi _*(K_{X'}+B'+\bbeta _{X'}+t\omega ')$ represents a K\"ahler class.
 Let $X^+\to \bar X$ be the log canonical model of $K_{X'}+B'+\bbeta _{X'}$ over $\bar X$ (if $X'\to \bar X$ is a divisorial contraction, then $X^+=\bar X$ and if $X'\to \bar X$ is a flipping contraction, then $X^+\to \bar X$ is the flip). It is easy to see that $K_{X^+}+B^++\bbeta _{X^+}+s\omega ^+$ is K\"ahler over $X$ for any $0<t-s\ll 1$. We may replace $(X',B'+\bbeta _{X'}+\omega ')$ by $(X^+,B^++\bbeta _{X^+}+s\omega ^+)$ and continue the process. Proceeding in this way we obtain a sequence of distinct log canonical models for $K_{X'}+B'+\bbeta _{X'}+t_i\omega '$
 where $t_0=t>t_1>t_2>\ldots$.
 The termination of this sequence can be checked locally over $X$ and hence follows by finiteness of models, see \cite[Theorem E]{Fuj22}. Therefore, after finitely many steps, $t_n=0$,  and $X_n\to X$ is small as explained above.  By Theorem \ref{t-3} there exists a small strongly $\Q$-factorial modification $\mu: X^{\rm qf}\to X_n$ where $\mu$ is projective, and hence $X^{\rm qf}$ is K\"ahler and $X^{\rm qf}\to X$ is also a small strongly $\Q$-factorial modification.
\end{proof}

%\begin{proof} Suppose that $-[C]\in \overline{\rm NA}(X)$, then $\alpha \cdot C\leq 0$ which is impossible.  By Theorem \ref{c-1}, there exists a small $\Q$-factorial K\"ahler modification $\nu :X'\to X$ \end{proof}

 \begin{remark}\label{r-1} Jia Jia and Sheng Meng \cite{JM25} show that there are many non-K\"ahler Fujiki class $\mathcal C$ manifolds with $h^{1,1}=1$. In this example, let $Y$ be a generic smooth quintic threefold, $g:X'\to Y$ the blow up of a rational curve of degree $d$ with normal bundle $\OO (-1)\oplus \OO (-1)$ so that $E\cong C_d\times C_{d}'\cong \mathbb P ^1\times \mathbb P ^1$.
 $f:X'\to X$ is the blow down of $C_d$, then $X$ is smooth Moishezon
but not projective and hence not K\"ahler. 
\begin{center}
\begin{tikzcd}
    {X'} & Y \\
    X
    \arrow["g", from=1-1, to=1-2]
    \arrow["f"', from=1-1, to=2-1]
\end{tikzcd}
\end{center}
The class $C_d$ is ``extremal" for $X'\to X$. Since $(g^*H+dE)\cdot C_d=0$ and $(g^*H+dE)\cdot C'_d=-d$, then $g^*H+dE=f^*\alpha$ where $\alpha \in H^{1,1}_{\rm{BC}}(X)$ is not nef and every other non-zero class in $H^{1,1}(X)$ is a multiple of $\alpha$ and hence not nef. Since the nef cone of $X$ is $\{0\}$, then $\overline{{\rm NA}}(X)=H^{1,1}_{\rm{BC}}(X)^\vee$. 
On the other hand, since $C'_d\to f(C'_d)$ is an isomorphism, $\alpha \cdot f(C'_d)=-d$. So $g^*H+dE$ defines $X'\to X$ over $X$ but is not semiample (absolutely). 
Let $L$ be the strict transform of a line disjoint from $C_d$, then on $X'$ we have $C_d\equiv C'_d+dL$ so that $0=f_*C_d \equiv f_*(C'_d+dL)$ which tells us that $-f_*C'_d\equiv f_*(dL)$ in $\overline{{\rm NA}}(X)$.

Note that $C_d$ is relatively extremal over $X$, 
while $C_{d}'$ is not in the relative Mori cone 
$\overline{\mathrm{NA}}(X'/X)$. 
Both $C_d$ and $C_{d}'$ are $K_{X'}$-negative curves. 
The absolute $K_{X'}$-MMP can contract $C'_d$ but not $C_{d}$, 
whereas the relative MMP over $X$ can only contract $C_d$. 
In this example, we have
$$
\overline{\mathrm{NA}}(Y) = \mathbb{R}_+[L], 
\quad 
\overline{\mathrm{NA}}(X) = H^{1,1}_{\rm{BC}}(X), 
\quad 
\overline{\mathrm{NA}}(X') = \mathbb{R}_+[L] \oplus \mathbb{R}_+[C_{d}'].
$$
In particular, $C_d$ lies in the interior of the Mori cone 
$\overline{\mathrm{NA}}(X')$ (since $C_d \equiv C_{d}' + dL$). 
The relative Mori cone $\overline{\mathrm{NA}}(X'/X) = \mathbb{R}_+[C_d]$ 
fails to be an extremal face of $\overline{\mathrm{NA}}(X')$.
\end{remark}

\begin{corollary} Let $X$ be a compact complex analytic variety in Fujiki's class $\mathcal C$ such that $(X,B+\bbeta )$ is a generalized klt pair with $B+\bbeta _X$ modified big, then $K_X+B+\bbeta _X$ is K\"ahler if and only if $K_X+B+\bbeta _X$ is nef and $(K_X+B+\bbeta _X)\cdot C>0$ for every curve $C\subset X$.
\end{corollary}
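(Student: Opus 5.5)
The forward direction is immediate. If $\alpha:=[K_X+B+\bbeta_X]$ is K\"ahler, then it is nef, and for every curve $C\subset X$ we have $\alpha\cdot C=\int_C\omega>0$ for a K\"ahler form $\omega\in\alpha$.

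For the converse, assume $\alpha$ is nef and $\alpha\cdot C>0$ for every curve $C\subset X$. The plan is to apply Theorem \ref{t-ray}, which says that a nef and big non-K\"ahler adjoint class admits an $\alpha$-trivial rational curve. Since no $\alpha$-trivial curve exists by hypothesis, it suffices to show that $\alpha$ is big. Then Theorem \ref{t-ray} forces $\alpha$ to be K\"ahler.

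\textbf{Step 1 (bigness).} Suppose $\alpha$ is nef but not big. We want to use Theorem \ref{t-raynonbig}, which would give $\alpha$-trivial rational curves covering $X$ and hence a contradiction. That theorem, however, assumes $B+\bbeta_X$ big, whereas we only know it is modified big. I would first invoke Lemma \ref{l-kltbig}. It produces a generalized klt pair $(X,G+\ggamma)$ with $K_X+B+\bbeta_X\equiv K_X+G+\ggamma_X$, where $\ggamma$ descends to a K\"ahler resolution $X'$ and $\ggamma_{X'}-\omega'$ is nef. Then $\ggamma_X=\nu_*\ggamma_{X'}$ is the pushforward of a big class.

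To get genuine bigness of $G+\ggamma_X$ on $X$, note that $\nu^*(\ggamma_X)=\ggamma_{X'}+(\text{exceptional})$. Alternatively, one can check directly that the proof of Theorem \ref{t-raynonbig} only uses bigness of $B'^{\ge0}+\bbeta_{X'}$ (plus exceptional terms) on a resolution. That proof in fact starts by applying Lemma \ref{l-kltbig}, which only needs modified bigness. So I would rerun the argument of Theorem \ref{t-raynonbig} verbatim under the modified big hypothesis.

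Concretely: pass to a strong $\Q$-factorialization (Theorem \ref{t-3}), which does not change curve intersection positivity in the relevant sense. Then take the MRC fibration $X'\to Z$; $X$ is uniruled since $K_{X'}+t\alpha'+(1-\delta)\omega'$ is not pseudo-effective. On a general fiber $F$, $\alpha|_F$ is not big. Running the MMP on the projective $F$ then produces $\alpha$-trivial rational curves through general points of $F$. Their images in $X$ are $\alpha$-trivial curves (not contracted, since they pass through general points where $\nu$ is an isomorphism), contradicting the hypothesis. Hence $\alpha$ is big.

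\textbf{Step 2.} With $\alpha$ nef and big, Theorem \ref{t-ray} applies to the generalized klt pair $(X,B+\bbeta)$. If $\alpha$ were not K\"ahler, there would be an $\alpha$-trivial rational curve, which is impossible. So $\alpha$ is K\"ahler.

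The main obstacle is Step 1: making sure Theorem \ref{t-raynonbig} (or its proof) goes through when $B+\bbeta_X$ is only modified big. The key point to check is the non-pseudo-effectivity claim, which uses $(G')^{\le0}$ being $\nu$-exceptional and the bigness of $\ggamma_{X'}$ on $X'$. Both are supplied by Lemma \ref{l-kltbig} under modified bigness.
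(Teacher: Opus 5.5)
Your proposal is correct and is the paper's proof. The forward direction is immediate, and the converse first uses Theorem \ref{t-raynonbig} to force $\alpha$ to be big and then Theorem \ref{t-ray} to conclude. You are right that $B+\bbeta_X$ is only modified big here, a point the paper passes over when it invokes Theorem \ref{t-raynonbig}. The first route you sketch is the cleanest way to handle it: on a strongly $\Q$-factorial model $\ggamma_X$ has local potentials, and the negativity lemma applied to the $\nu$-exceptional class $\nu^*\ggamma_X-\ggamma_{X'}\equiv_X-\ggamma_{X'}$ gives $\nu^*\ggamma_X\ge\ggamma_{X'}$. Hence $G+\ggamma_X$ is genuinely big, and Theorem \ref{t-raynonbig} applies verbatim to $(X,G+\ggamma)$.
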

\begin{proof} One direction is immediate. If $K_X+B+\bbeta _X$ is K\"ahler then $K_X+B+\bbeta _X$ is nef and $(K_X+B+\bbeta _X)\cdot C>0$ for every curve $C\subset X$. The converse implication follows from Theorems \ref{t-raynonbig} and \ref{t-ray}.
%Since $K_X+B+\bbeta _X$ is nef, if there is a curve $C$ such that $-[C]\in \overline{\rm NA}(X)$, then $(K_X+B+\bbeta _X)\cdot C\leq 0$ which is a contradiction. By Theorem \ref{c-1}, there is a small K\"ahler $\Q$-factorialization $\mu:X^{\rm qf}\to X$. If we write $K_{X^{\rm qf}}+B^{\rm qf}+\bbeta _{X^{\rm qf}}=\mu ^*(K_X+B+\bbeta _X)$, then $(X^{\rm qf}, B^{\rm qf}+\bbeta )$ is generalized klt. By the cone theorem \cite{HP24}, $K_{X^{\rm qf}}+B^{\rm qf}+\bbeta _{X^{\rm qf}}$ is nef. By Lemma \ref{l-nefpb}, $K_X+B+\bbeta _X$ is nef.\Xieinline{I think the condition ``$K_X+B+\bbeta_X$ is nef" in the theorem should be removed, otherwise we can just apply the following. BTW should be Theorem \ref{t-ray}, right? CH: Are you sure? The cone theorem doesn't apply to Fujiki class C right?Did I miss something? I guess that if we assume $K_X+B+\bbeta _X$ is NQC, then one could run the proof of Thm 1.3 running a $\alpha'=\nu ^*(K_X+B+\bbeta _X)$-trivial mmp which is then an mmp over $X$.}
\end{proof}

\end{document}